\newtheorem{thm}{Theorem}[section]
\newtheorem{cor}[thm]{Corollary}
\newtheorem{lem}[thm]{Lemma}
\newtheorem{Q}[thm]{Question}
\newtheorem{prop}[thm]{Proposition}
\theoremstyle{definition}
\newtheorem{BF}{Basic Fact}
\newtheorem{defn}[thm]{Definition}
\newtheorem{note}{Note}
\newtheorem{rem}[thm]{Remark}
\newtheorem{exa}[thm]{Example}
\theoremstyle{remark}
\journal{JDE April 1, 2017 (accepted June 19, 2017)}
\begin{document}

\begin{frontmatter}

\title{Devaney chaos, Li-Yorke chaos, and multi-dimensional Li-Yorke chaos for topological dynamics}
\author{Xiongping Dai}
\ead{xpdai@nju.edu.cn}
\author{Xinjia Tang}
\address{Department of Mathematics, Nanjing University, Nanjing 210093, People's Republic of China}

\begin{abstract}
Let $\pi\colon T\times X\rightarrow X$, written $T\curvearrowright_\pi X$, be a  topological semiflow/flow on a uniform space $X$ with $T$ a multiplicative topological semigroup/group not necessarily discrete. We then prove:
\begin{itemize}
\item If $T\curvearrowright_\pi X$ is non-minimal topologically transitive with dense almost periodic points, then it is sensitive to initial conditions. As a result of this, Devaney chaos $\Rightarrow$ Sensitivity to initial conditions, for this very general setting.
\end{itemize}
Let $\mathbb{R}_+\curvearrowright_\pi X$ be a $C^0$-semiflow on a Polish space; then we show:
\begin{itemize}
\item If $\mathbb{R}_+\curvearrowright_\pi X$ is topologically transitive with at least one periodic point $p$ and there is a dense orbit with no nonempty interior, then it is \textit{multi-dimensional Li-Yorke chaotic}; that is, there is a uncountable set $\Theta\subseteq X$ such that for any $k\ge2$ and any distinct points $x_1,\dotsc,x_k\in\Theta$, one can find two time sequences $s_n\to\infty, t_n\to\infty$ with
    \begin{equation*}
    s_n(x_1,\dotsc,x_k)\to(x_1,\dotsc,x_k)\in X^k\quad \textrm{and}\quad t_n(x_1,\dotsc,x_k)\to(p,\dotsc,p)\in\varDelta_{X^k}.
    \end{equation*}
    Consequently, Devaney chaos $\Rightarrow$ Multi-dimensional Li-Yorke chaos.
\end{itemize}
Moreover, let $X$ be a non-singleton Polish space; then we prove:
\begin{itemize}
\item Any weakly-mixing $C^0$-semiflow $\mathbb{R}_+\curvearrowright_\pi X$ is densely multi-dimensional Li-Yorke chaotic.
\item Any minimal weakly-mixing topological flow $T\curvearrowright_\pi X$ with $T$ abelian is densely multi-dimensional Li-Yorke chaotic.
\item Any weakly-mixing topological flow $T\curvearrowright_\pi X$ is densely Li-Yorke chaotic.
\end{itemize}
We in addition construct a completely Li-Yorke chaotic minimal $\textrm{SL}(2,\mathbb{R})$-acting flow on the compact metric space $\mathbb{R}\cup\{\infty\}$. Our various chaotic dynamics are sensitive to the choices of the topology of the phase semigroup/group $T$.
\end{abstract}

\begin{keyword}
Devaney and Li-Yorke chaoses $\cdot$ Multi-dimensional Li-Yorke chaos $\cdot$ Sensitivity to initial conditions $\cdot$ Weakly-mixing dynamics

\medskip
\MSC[2010] 37B05 $\cdot$ 37B20 $\cdot$ 20M20
\end{keyword}
\end{frontmatter}

\section{Introduction}\label{sec1}
We begin by briefly recalling the definition of topological dynamical systems. Let $T$ be an arbitrary multiplicative topological semigroup not necessarily discrete, which jointly continuously acts from left on a topological space $X$ via an acting map $\pi\colon(t,x)\mapsto\pi(t,x)=\pi_t(x)$, written as
\begin{gather*}
\pi\colon T\times X\rightarrow X\ \textrm{ or }\ T\curvearrowright_\pi X,
\end{gather*}
such that
\begin{itemize}
\item $\pi_s\pi_t(x)=\pi_{st}(x)\ \forall s,t\in T, x\in X$ and
\item $\pi_e=i_X\colon x\mapsto x$ is the identity map of $X$ onto itself if $e$ is the neutral element of $T$ whenever $T$ is a monoid.
\end{itemize}
For simplicity, sometimes we will identify the transition map $\pi_t\colon x\mapsto \pi(t,x)$ with $t\colon x\mapsto tx$, for each $t\in T$, if no confusion arises.

Here the triple $(T,X,\pi)$, or simply $(T,X)$ and $T\curvearrowright_\pi X$, will be called a \textit{topological semiflow} with phase space $X$ and with phase semigroup $T$. If $T$ is just a topological group, then $(T,X,\pi)$ or $T\curvearrowright_\pi X$ will be called a \textit{topological flow} with phase group $T$.

There are systematical studies on chaos for $\mathbb{Z}$- or $\mathbb{Z}_+$-action dynamical systems on compact metric spaces; see, e.g., \cite{HY, AGHSY, LY} and references therein. In this paper, we will study Devaney chaos, Li-Yorke chaos, and multi-dimensional Li-Yorke chaos of flows or semiflows $(T,X,\pi)$ on uniform spaces $X$ with general topological phase group or semigroup $T$.

First of all, it should be noticed that chaotic dynamics are very different for general group or semigroup actions, even for $\mathbb{R}$ and $\mathbb{R}_+$, with the well-known $\mathbb{Z}$- or $\mathbb{Z}_+$-actions. Let's see an example, which is already beyond the setting of the following important chaos criterion:
\begin{itemize}
\item Let $f\colon X\rightarrow X$ be a topologically transitive continuous self-map of a compact metric space $X$ with no isolated points. If there is some subsystem $(Y,f)$ of $(X,f)$ such that $(X\times Y,f)$ is topologically transitive, then $(X,f)$ is densely uniformly (Li-Yorke) chaotic. See \cite[Theorem~3.1]{AGHSY}.
\end{itemize}

\begin{exa}\label{e1}
Let $X=\mathbb{R}\cup\{\infty\}$ which is the one-point compactification of $\mathbb{R}$ homeomorphic to the unit circle $\mathbb{T}$ with no isolated points, and let $T=(\mathbb{R},+)$ with the usual topology. Define the flow $\pi\colon (t,x)\mapsto x+t$ of $T\times X$ onto $X$. Clearly, $(T,X,\pi)$ is a non-minimal topologically transitive with a unique fixed point $\infty$. However, this topological flow has no Li-Yorke chaotic pairs.
\end{exa}

In Example~\ref{e1}, if let $Y=\{\infty\}$, then $(T,X\times Y)$ is topologically transitive. So while all of the conditions of the above criterion of Akin et al. are satisfied, the topological flow $\mathbb{R}\curvearrowright_\pi X$ is not at all Li-Yorke chaotic as is shown by the following figure.

\setlength{\unitlength}{1mm}
\begin{picture}(50, 17)
\put(25,12){\circle*{1}}
\put(23.5,13){$\infty$}
\closecurve(15,8,14,3,35,8,25,12)
\put(15,8){\vector(2,-1){1}}
\end{picture}

In addition, for a discrete-time topological semiflow $\mathbb{Z}_+\curvearrowright_\pi X$ on a metric space $(X,d)$, we say $x,y\in X, x\not=y$ is a Li-Yorke chaotic pair if there are two infinite sequences $\{s_n\}, \{t_n\}$ in $\mathbb{Z}_+$ with
\begin{equation*}
\lim_{n\to\infty}d(t_nx,t_ny)=0\quad \textrm{and}\quad \lim_{n\to\infty}d(s_nx,s_ny)>0.
\end{equation*}
However, for any continuous-time topological semiflow $\mathbb{R}_+\curvearrowright_\pi X$, one can always take an infinite time-sequence $s_n\downarrow0$ so that
\begin{equation*}
\lim_{n\to\infty}d(s_nx,s_ny)=d(x,y)>0.
\end{equation*}
This shows that the definition of Li-Yorke chaos for any topological semiflow/flow with general phase semigroup/group should be more subtler than the classical $\mathbb{Z}$- or $\mathbb{Z}_+$-acting case.

Moreover, it should be mentioned that the topology of the phase semigroup/group $T$ will play an important role in our later discussion; this is because different topology of $T$ induces different periodic points, almost periodic points and recurrent points included in our chaotic dynamics. For instance, let
\begin{equation*}
\pi\colon\mathbb{R}\times\mathbb{R}/\mathbb{Z}\rightarrow\mathbb{R}/\mathbb{Z};\quad \textrm{by }(t,x)\mapsto t+x \pmod{1}
\end{equation*}
which is such that $\{t\,|\,\pi_t(x)=x\}=\mathbb{Z}$. Then $(\mathbb{R},\mathbb{R}/\mathbb{Z},\pi)$ is pointwise periodic, for $\mathbb{Z}$ is syndetic in $\mathbb{R}$, under the usual topology of $\mathbb{R}$; however, it has no periodic points, for $\mathbb{Z}$ is not syndetic in $\mathbb{R}$, under the discrete topology.

\subsection{Sensitivity of semiflows}\label{sec1.1}
Let $\mathscr{U}_X$ be a compatible symmetric uniform structure of a uniform topological space $X$ (cf.~\cite[Appendix~II]{Aus}); then we will write $\varepsilon[x]=\{y\in X\,|\,(x,y)\in\varepsilon\}$, for $x\in X$ and $\varepsilon\in\mathscr{U}_X$.

It is well known that sensitive dependence on initial conditions is one of the basic ideas in chaotic dynamics; see, for example, \cite{LY, AY, Dev, BB, GW, AAB, HY, AK, GM, KM, WLF, SKBS, HLY, YZ, WH, D1, D2, LiY} and so on.
First of all we need to recall the notion of sensitivity in the general setting.
\begin{itemize}
\item A topological semiflow $(T,X,\pi)$ on a uniform space $(X,\mathscr{U}_X)$ with phase semigroup $T$ is said to be \textbf{\textit{sensitive}} to initial conditions in case there exists an $\varepsilon\in\mathscr{U}_X$ (sensitivity index) such that for every $x\in X$ and any $\delta\in\mathscr{U}_X$, one can find some $y\in\delta[x]$ and $t\in T$ such that
    $(tx,ty)\not\in\varepsilon$. See, e.g., \cite{GM,KM,Pol,WLF,Ryb,CC,SKBS,Ar}.
    \begin{itemize}
    \item It is easy to see that if $(T,X,\pi)$ is not sensitive to initial conditions, then $(T,X,\pi)$ is equicontinuous at some point $x\in X$; and conversely, if $(T,X,\pi)$ is equicontinuous at some point of $X$, then it is not sensitive to initial conditions.
    \end{itemize}
\end{itemize}

In our semigroup setting, we will prove the following proposition, which appears as \cite[Main result]{KM} in the special case that $T$ is assumed a C-semigroup (i.e. $T\setminus{Tt_0}$ is relatively compact in the topological semigroup $T$ for each $t_0\in T$) acting on a compact metric space.
\begin{itemize}
\item \textit{Let $(T,X,\pi)$ be a topological semiflow on a uniform space $X$, which is topologically transitive with dense almost periodic points. If $(T,X,\pi)$ is
\begin{itemize}
\item[\textbf{--}] either non-equicontinuous with $T$ abelian and with $X$ a compact Hausdorff space
\item[\textbf{--}] or non-minimal,
\end{itemize}
then $(T,X,\pi)$ is sensitive to initial conditions} (see Proposition~\ref{prop2.8} in $\S\ref{sec2.1}$).
\end{itemize}
Thus, for any topological semiflow on a uniform space, by the second part of the above proposition, it follows that
\begin{itemize}
\item ``Devaney chaos'' (topologically transitive $+$ dense of periodic points $+$ non-minimal; cf.~Definition~\ref{def2.9} in $\S\ref{sec2.2}$) $\Rightarrow$ Sensitivity to initial conditions.
\end{itemize}

This therefore generalizes the available main results in literature such as \cite[Theorem~1.2]{CC} and \cite[Theorem~4.7]{SKBS} which are only for discrete group $T$. Moreover, because the topology of $T$ is not necessarily to be discrete here, our result is useful for the classical $C^0$-semiflows on manifolds with $T=(\mathbb{R},+)$.

In addition, it should be mentioned that our approaches are completely different with those introduced in \cite{BB,HY,KM,CC,SKBS}.

It turns out that the above sensitivity result is a useful tool for our later study of Li-Yorke chaos of weakly-mixing topological semiflows in $\S\ref{sec3}$.

\subsection{Li-Yorke chaos of semiflows}\label{sec1.2}
It is a well-known fact that every topologically weakly-mixing continuous self-map $f$ of a non-trivial compact metric space $X$ is Li-Yorke chaotic (cf.~\cite{XY,HY,AK}). In fact, it is also the case for a topological flow $\pi\colon T\times X\rightarrow X$ on a Polish space with no isolated points. This is just the another main result of this paper.

Similar to the classical Li-Yorke chaos case over a compact metric space, motivated by \cite{KL} for countable discrete groups we now introduce the following notions of chaos we will consider later on.

\begin{defn}\label{def1.2}
Let $(T,X,\pi)$ be a topological semiflow on a metric space $(X,d)$ and $\mathcal{F}=\{F_n\}_{n=1}^\infty$ an increasing sequence of compact subsets of $T$. Such an $\mathcal{F}$ will be referred to as a \textbf{\textit{reference sequence}} for $(T,X,\pi)$.
\begin{itemize}
\item A subset $S$ of $X$ is called a \textbf{\textit{Li-Yorke chaotic set for $(T,X,\pi)$ relative to $\mathcal{F}$}}, provided that
$S$ is a uncountable set and for any $x,y\in S$ with $x\not=y$,
\begin{itemize}
\item $(x,y)$ is a \textit{proximal} pair (i.e. $\lim_{n\to\infty}d(t_nx,t_ny)=0$ for some sequence $\{t_n\}$ in $T$) and
\item $(x,y)$ is a \textit{separated} pair (i.e. $\liminf_{n\to\infty}d(s_nx,s_ny)>0$ for some sequence $\{s_n\}$ in $T$ with $s_n\in T\setminus F_n, n=1,2,\dotsc$).
\end{itemize}

\item $(T,X,\pi)$ is said to be \textbf{\textit{Li-Yorke chaotic}} if there is a Li-Yorke chaotic set $S_\mathcal{F}$ for any reference sequence $\mathcal{F}$ in $T$.
\item $(T,X,\pi)$ is called \textbf{\textit{completely Li-Yorke chaotic}} if $X$ itself is a Li-Yorke chaotic set for $(T,X,\pi)$ relative to any reference sequence $\mathcal{F}$ in $T$.

\item In addition, $(T,X,\pi)$ is called \textbf{\textit{densely Li-Yorke chaotic}} if there is a dense Li-Yorke chaotic set $S_\mathcal{F}$ relative to any reference sequence $\mathcal{F}$ in $T$.
\end{itemize}
\end{defn}

Of course, Li-Yorke chaos itself may be defined for any topological semiflow on any uniform space. However, we are only concerned with flows or semiflows on Polish spaces here.
Note that in our abstract topological semigroup/group acting case, the $\mathcal{F}$-constraint in Definition~\ref{def1.2} is natural and important. For example, let's consider a classical $C^0$-semiflow
\begin{gather*}
\pi\colon T\times X\rightarrow X\quad \textrm{where }T=(\mathbb{R},+).
\end{gather*}
If we only require ``$\exists s_n\in T$ with $\liminf_nd(s_nx,s_ny)>0$'' as \cite[Definition~4.3]{WLF} with no the $F_n$-constraint $s_n\not\in F_n$, then $(x,y)$ may not be a real Li-Yorke chaotic pair for $\mathbb{R}\curvearrowright_\pi X$ such as if $s_n\to0$ then $d(s_nx,s_ny)\to d(x,y)>0$ but $(x,y)$ is possibly an asymptotically proximal pair mentioned before. However, if we take the reference sequence $F_n=[-n,n]$ for $n=1,2,\dotsc$, then we can just avoid this awkward position.

We shall prove the following statement in our very general setting in this paper.
\begin{itemize}
\item \textit{Devaney chaos $\Rightarrow$ Li-Yorke chaos} (see Corollary~\ref{cor2.16}, Proposition~\ref{prop2.17}, and Proposition~\ref{prop2.21} in $\S\ref{sec2.3}$).
\end{itemize}

Notice that our Li-Yorke chaos is different with the classical Li-Yorke case~\cite{LY}, because here $(T,X,\pi)$ does not need to have any periodic or fixed points. Moreover, our Li-Yorke chaos is also different with Devaney chaos (\cite[Def.~3.12]{SKBS} and Def.~\ref{def2.9} in $\S\ref{sec2.2}$ below) that requires $(T,X,\pi)$ is non-minimal and topologically transitive with dense periodic points. In fact, Example~\ref{e1.3} below is a Li-Yorke chaotic weakly-mixing minimal topological flow on a compact metric space.

M.~Misiurewicz proved in 1985~\cite{Mis} that there exists a continuous map of the unit interval onto itself for which there exists a chaotic set of Lebesgue measure $1$. Moreover, in 1999~\cite{Mai} J.~Mai constructed an example of $X$ which admits a completely Li-Yorke chaotic continuous map. Furthermore, Huang and Ye in 2001 \cite{HY01} showed that there are compacta admitting completely Li-Yorke chaotic homeomorphisms.

Nevertheless it should be noted here that \textit{there exists no completely Li-Yorke chaotic minimal semiflow on a compact Hausdorff space with an abelian acting semigroup} (cf.~\cite[Proposition~2.8]{Dai}). This implies that each minimal subsystem of Mai's and Huang and Ye's completely Li-Yorke chaotic examples just consists of one point.

We will, however, begin our discussion on Li-Yorke chaos with introducing a completely Li-Yorke chaotic \textit{minimal topological flow} on a compact metric space with a \textit{non-abelian} canonical phase group.

\begin{exa}\label{e1.3}
Let $X=\mathbb{R}\cup\{\infty\}$, which is homeomorphic to the unit circle $\mathbb{T}$, and $T=\textrm{SL}(2,\mathbb{R})$ (the locally compact second countable topological group of $2\times 2$ real matrices with determinant $1$). Our action of $T$ on $X$ is defined by
\begin{gather*}
\pi\colon T\times X\rightarrow X;\quad (t,x)\mapsto tx=\frac{ax+b}{cx+d}\quad \forall t=\left[\begin{matrix}a&b\\c&d\end{matrix}\right]\in T\textrm{ and }x\in X,
\end{gather*}
such that $t\infty=\frac{a}{c}$ for any $t=\left[\begin{smallmatrix}a&b\\c&d\end{smallmatrix}\right]\in T$.
It is well known and easily checked that the action of $T$ on $X$ is such that if $x,x^\prime\in X$ there is a $t\in T$ with $tx=x^\prime$. So $(T,X,\pi)$ is minimal and pointwise periodic (i.e. $\{t\in T\,|\,tx=x\}$ is syndetic in $T$ for each $x\in X$; cf.~\cite[Exercise~1.3]{Aus}). Moreover, if $x$ and $x^\prime$ are distinct points of $X$, then there is a $t\in T$ with $tx=0$ and $tx^\prime=1$ (see \cite[p.~29]{Aus}). Thus, to show that $(T,X,\pi)$ is completely Li-Yorke chaotic, it is sufficient to show that $(0,1)\in X\times X$ is a Li-Yorke chaotic pair, i.e., it is proximal and separated for $(T,X,\pi)$. For this, let $\{F_n\}_{n=1}^\infty$ be any reference sequence in $T$ and let
\begin{gather*}
t_n=\left[\begin{matrix}n^{-1}&0\\1&n\end{matrix}\right]\quad \textrm{and}\quad s_n=\left[\begin{matrix}n&0\\1&n^{-1}\end{matrix}\right]\quad n=1,2,\dotsc
\end{gather*}
and note that $s_n\not\in F_n$ by dropping some $n$ if necessary and
\begin{gather*}
\lim_{n\to\infty}t_n0=\lim_{n\to\infty}t_n1=0\quad \textrm{and}\quad 0=\lim_{n\to\infty}s_n0\not=\lim_{n\to\infty}s_n1=\infty;
\end{gather*}
therefore each $(x,x^\prime)\in X\times X$ with $x\not=x^\prime$ is a Li-Yorke chaotic pair for $(T,X,\pi)$.
This completes our construction of a \textit{completely Li-Yorke chaotic minimal flow on a compact metric space with non-abelian phase group}.

Moreover, it is easy to verify that $(T,X,\pi)$ admits no invariant quasi-regular Borel probability measure on $X$, and so $\textrm{SL}(2,\mathbb{R})$ is not amenable. \hfill$\square$
\end{exa}


Next based on R.~Ellis' ``two-circle'' minimal set, we will construct another example with non-metrizable phase space, which is not Li-Yorke chaotic but which contains infinitely many Li-Yorke chaotic pairs and which is sensitive to initial conditions.

\begin{exa}\label{e1.4}
Let $\mathbb{T}$ be the circle, regarded as the real numbers modulo $1$, and let $Y_0=\mathbb{T}\times\{0\}$ and $Y_1=\mathbb{T}\times\{1\}$ be two distinct copies of $\mathbb{T}$. Let $T=\mathbb{T}$ with the discrete topology and $X=Y_0\cup Y_1$. Here $X$ will be topologized by specifying an open-closed neighborhood base for each point as follows. If $\varepsilon>0$, let
\begin{align*}
N_\varepsilon(y,0)&=\{(y+r,0)\,|\,0\le r<\varepsilon\}\cup\{(y+r,1)\,|\,0<r\le\varepsilon\}\\
\intertext{and}
N_\varepsilon(y,1)&=\{(y+r,0)\,|\,0>r\ge-\varepsilon\}\cup\{(y+r,1)\,|\,0\ge r>-\varepsilon\};
\end{align*}
then $N_\varepsilon(y,0)$ and $N_\varepsilon(y,1)$ are open-closed basic neighborhoods of $(y,0)$ and $(y,1)$, respectively. With this topology $X$ is a compact Hausdorff zero-dimensional space which is first countable but not second countable (and so not metrizable). See \cite[p.~44]{Ell} or \cite[p.~28]{Aus}.

Now the discrete topological abelian group $T$ acts on $X$ by the action map
\begin{gather*}
\pi\colon T\times X\rightarrow X;\quad (t,(y,j))\mapsto (y+t,j)\ \forall t\in T\textrm{ and }(y,j)\in X.
\end{gather*}
Given any $x=(y,j)\in X, j=0,1$ the orbit $Tx=Y_j$ and so $\overline{Tx}=X$ and thus $(T,X,\pi)$ is minimal. If $y\in\mathbb{T}$, then $(y,0)$ is proximal to $(y,1)$ because for every $\varepsilon>0$ there is some irrational $t\in T$ such that $t(y,0)=(y+t,0)$ and $t(y,1)=(y+t,1)$ both are in $N_\varepsilon(y,0)$. On the other hand, for any $y\in\mathbb{T}$ and any rational $t\in T$, $t_n(y,j)=(y,j)$ where $t_n=nt$ for some integer $n>0$ and so $t_n(y,j)\in N_\varepsilon(y,j)$ for any $\varepsilon>0$.

This shows that $((y,0),(y,1))\in X\times X$ is a Li-Yorke chaotic pair of $(T,X,\pi)$, for any $y\in\mathbb{T}$. Moreover, $(T,X,\pi)$ is sensitive by Proposition~\ref{prop2.8} but not Li-Yorke chaotic.\hfill$\square$
\end{exa}

We notice here that if $T$ is a discrete countable abelian monoid, then $(T,X,\pi)$ is Li-Yorke chaotic under Devaney's condition together with existing fixed points (cf.~\cite[Theorem~4.9]{WLF}) if removing the $F_n$-constraint $s_n\not\in F_n$.

Now our other Li-Yorke chaotic dynamics results may be stated as follows, which are generalizations of \cite{XY} (also see \cite[Theorem~3.8 and Corollary~3.9]{AK}) from topologically strongly-mixing cascades on compact metric spaces to topologically weakly-mixing group/semigroup actions on Polish spaces.
\begin{itemize}
\item \textit{Let $(T,X,\pi)$ be a topologically weakly-mixing topological semiflow on a non-singleton Polish space $X$ with $T$ a discrete semigroup. Then it is densely Li-Yorke chaotic} (see Proposition~\ref{prop3.12} in $\S\ref{sec3.2}$).
\item \textit{Let $(T,X,\pi)$ be a topologically weakly-mixing topological flow on a non-singleton Polish space with $T$ a topological group; then it is densely Li-Yorke chaotic}. (See Proposition~\ref{prop3.21} in $\S\ref{sec3.3}$; and cf.~\cite[Theorem~1.2]{WZ} for the special case that $T$ is a countable discrete group.)
\end{itemize}

It should be noticed here that although $(T,X,\pi)$ is topologically weakly mixing, yet $(X,\pi_t)$, for $t\in T$, need not be a topologically weakly-mixing cascade. Thus our semigroup/group acting case is never a consequence of the well-known cascade case. The above propositions will be respectively proved in $\S\ref{sec3.2}$ and $\S\ref{sec3.3}$ based on a classical topological lemma of Kuratowski.

On the other hand, by using Furstenberg Intersection Lemma (cf.~Lemma~\ref{lem3.2}) and the sensitivity of topologically weakly-mixing systems (cf.~Lemma~\ref{lem3.20}), we shall prove the followings.
\begin{itemize}
\item \textit{Any topologically weakly-mixing semiflow $(T,X,\pi)$ on a non-singleton compact metric space $X$ with $T$ an abelian semigroup not necessarily discrete, it is densely Li-Yorke chaotic} (see Proposition~\ref{prop3.22} in $\S\ref{sec3.3}$).
\item \textit{Any topologically strongly-mixing flow on a non-singleton Polish space is densely Li-Yorke chaotic} (see Corollary~\ref{cor3.25} in $\S\ref{sec3.5}$).
\end{itemize}
It should be reminded again here that our Li-Yorke chaos is very sensitive to the topology of the phase group or semigroup $T$.
\subsection{Multi-dimensional Li-Yorke chaos}\label{sec1.3}
We now consider a new chaotic behavior---multi-dimensional Li-Yorke chaos, which looks more chaotic than the usual Li-Yorke chaos. Given any reference sequence $\mathcal{F}=\{F_n\}_{n=1}^\infty$ of compact subsets of the topological semigroup $T$, we shall say that:

\begin{defn}\label{def1.5}
Let $(T,X,\pi)$ be a topological semiflow on a topological space $X$ with a phase semigroup $T$. Then
\begin{itemize}
\item $(T,X,\pi)$ is called \textbf{\textit{multi-dimensional chaotic relative to $\mathcal{F}$}}, provided that one can find an \textit{infinite} subset $\Theta$ of $X$ such that
    \begin{itemize}
    \item  for any $k\ge2$ and any distinct points $x_1,\dotsc,x_k\in\Theta$ there are two sequences $\{s_n\}, \{t_n\}$ in $T$ and a point $p\in X$ with $s_n\not\in F_n$ for $n=1,2,\dotsc$ such that
    \begin{equation*}
    s_n(x_1,\dotsc,x_k)\to(x_1,\dotsc,x_k)\in X^k\quad \textrm{and} \quad t_n(x_1,\dotsc,x_k)\to (p,\dotsc,p)\in\varDelta_{X^k}
    \end{equation*}
     as $n\to\infty$. Here $\varDelta_{X^k}=\{(x,\dotsc,x)\in X^k\,|\,x\in X\}$.
    \end{itemize}
    It is called \textbf{\textit{multi-dimensional chaotic}} if it is multi-dimensional chaotic relative to any reference sequence $\mathcal{F}$ in $T$.
\item $(T,X,\pi)$ is called \textbf{\textit{multi-dimensional Li-Yorke chaotic relative to $\mathcal{F}$}} if one can further find a uncountable such $\Theta$ in the above. It is called \textbf{\textit{multi-dimensional Li-Yorke chaotic}} if it is multi-dimensional Li-Yorke chaotic relative to any reference sequence $\mathcal{F}$ in $T$.
\end{itemize}
\end{defn}

Clearly, the `2D Li-Yorke chaos' case, which is known as `strong Li-Yorke chaos' for cascades on compact metric spaces in \cite{AGHSY}, is already conceptually stronger than the usual Li-Yorke chaos introduced in $\S\ref{sec1.2}$.

We shall prove the following statements in $\S\ref{sec2.3}$:
\begin{itemize}
\item On a Polish space with $T$ an abelian \textit{topological} group (not necessarily discrete), it holds that
\begin{itemize}
\item \textit{Devaney chaos $\Rightarrow$ Multi-dimensional chaos} (see Proposition~\ref{prop2.14} and Corollary~\ref{cor2.16}).
\end{itemize}
\item On a Polish space with no isolated points,
\begin{itemize}
\item \textit{Devaney chaos $\Rightarrow$ Multi-dimensional Li-Yorke chaos} (see Proposition~\ref{prop2.17} for the case $T=(\mathbb{R}_+,+)$ and Proposition~\ref{prop2.21} for \textit{discrete} abelian group $T$).
\end{itemize}
\item[] It should be mentioned that the topology of the abelian phase group or semigroup $T$ is important for Devaney chaos here; this is because different topologies of $T$ will define periodic orbits with different topological structures.
\end{itemize}

It is a known fact that a topologically weakly-mixing flow may have no periodic points differently with a Devaney chaotic system. For this kind of dynamics, we shall prove in $\S\ref{sec3.2}$:
\begin{itemize}
\item \textit{On a non-singleton compact metric space $X$, every minimal topologically weakly-mixing flow $(T,X,\pi)$ with $T$ an abelian group is densely multi-dimensional Li-Yorke chaotic} (see Proposition~\ref{prop3.9}).
\item  \textit{Any topologically weakly-mixing semiflow $\mathbb{R}_+\curvearrowright_\pi X$ on a non-singleton Polish space $X$ is densely multi-dimensional Li-Yorke chaotic} (see Proposition~\ref{prop3.10}).
\item \textit{Every topologically weakly-mixing semiflow on a non-singleton Polish space with discrete abelian phase semigroup is densely multi-dimensional Li-Yorke chaotic} (see Proposition~\ref{prop3.13}).
\end{itemize}

As the Li-Yorke chaos in $\S\ref{sec1.2}$, the multi-dimensional chaos also depends upon the topology of the phase semigroup/group $T$.

\begin{rem}
The above multi-dimensional Li-Yorke chaos is comparable with the known concept `uniform chaos' \cite[Definition~2.13]{AGHSY} defined for a cascade on a compact metric space with the discrete phase semigroup $\mathbb{Z}_+$.
\end{rem}
\subsection{Standing notation}\label{sec1.4}
In this paper, unless stated otherwise, $X$ denotes a uniform topological space (completely regular space) with a compatible symmetric uniformity $\mathscr{U}_X\subset 2^{X\times X}$ and $T$ denotes a multiplicative topological group or semigroup. By $e$ it means the neutral element of $T$ if $T$ is a monoid.
\begin{itemize}
\item A subset $A$ of $T$ is called \textbf{\textit{thick}} if for any compact subset $K$ of $T$, one can find some $t\in T$ with $Kt\subseteq A$. If $A$ is a thick subset of a non-trivial semigroup $T$, then $A\not=\{e\}$.
\item A subset $B$ of $T$ is called \textbf{\textit{syndetic}} provided that there is a compact subset $K$ of $T$ such that $Kt\cap B\not=\emptyset\ \forall t\in T$. (This is very different with \cite[Def.~3.1]{SKBS} requiring $T=K^{-1}B=KB$. See \cite{CD} for counterexamples.)

    Then $B$ is syndetic if and only if $B\backslash\{e\}$ is syndetic in a non-trivial $T$. Indeed, let $B$ be syndetic and $K$ a compact set in $T$ with $Kt\cap B\not=\emptyset\ \forall t\in T$. Given any $b\in B$ with $b\not=e$, we have $(K\cup bK)t\cap(B\setminus\{e\})\not=\emptyset\ \forall t\in T$. This follows the conclusion.
\item Any set $B$ is syndetic in $T$ if and only if $B\cap A\not=\emptyset$ for any thick subset $A$ of $T$.
\end{itemize}

Notice here that if the phase semigroup $T$ is not a monoid, we can always add forcedly a two-sided neutral element $e$ to $T$ with no serious false aftermath.

Finally, for any topological semiflow $(T,X,\pi)$ we should bear in mind that it is possible that distinct elements $s$ and $t$ of $T$ may induce the same transition map: $\pi_t(x)=\pi_s(x)$ for all $x\in X$.
\begin{itemize}
\item A point $x$ is called \textbf{\textit{almost periodic}} or \textit{uniformly recurrent} for a semiflow $(T,X,\pi)$ if for any neighborhood $U$ of $x$,
\begin{equation*}
N_T(x,U)=\{t\in T\,|\,tx\in U\}
\end{equation*}
is syndetic in $T$. See \cite{GH,Fur,Aus,CD}.

\item A point $x$ is called a \textbf{\textit{periodic point}} for $(T,X,\pi)$ if the stabilizer $S_x=\{t\in T\,|\,tx=x\}$ is syndetic in $T$. (This is much weaker than Schneider et al.~\cite[Def.~3.4]{SKBS} and \cite{CC,Ar}.)

\item We shall say that $(T,X,\pi)$ is \textbf{\textit{uniformly almost periodic}} if for any $\varepsilon\in\mathscr{U}_X$ there exists a syndetic subset $A$ of $T$ such that $Ax\subseteq\varepsilon[x]$ for every $x\in X$. See \cite{GH, Aus}.
\end{itemize}
The above various recurrence definitions are also valid for a semiflow on any topological phase space $X$.

We have mentioned that the topology of the phase semigroup/group $T$ is important for our chaotic dynamics. For example, let $T\curvearrowright_\pi X$ be a topological flow on a non-singleton compact metric space, which is pointwise periodic (i.e. every point of $X$ is periodic). Then if here $T$ is under the discrete topology, then $T\curvearrowright_\pi X$ is never weakly mixing. However, we do not know whether or not this is also true for general non-discrete topological group.
\section{Sensitivity of M-semiflows and Devaney chaos implying Li-Yorke chaos}\label{sec2}
This section will be mainly devoted to proving that M-semiflow is sensitive to initial conditions stated in $\S\ref{sec1.1}$ and to introducing Devaney chaos and showing ``Devaney chaos $\Rightarrow$ sensitivity'' for topological semiflows in $\S\ref{sec2.2}$. And then we will show ``Devaney chaos $\Rightarrow$ Multi-dimensional Li-Yorke chaos'' for topological flows with abelian phase groups in $\S\ref{sec2.3}$.

\subsection{Sensitivity of M-semiflows}\label{sec2.1}
Let $(T,X,\pi)$ be any given topological semiflow on a topological space $X$ with arbitrary topological phase semigroup $T$.

In the sequel, $t^{-1}$ denotes the inverse $\pi_t^{-1}$ of the transition map $\pi_t\colon x\mapsto tx$ of $X$ to itself, not the inverse element of $t$ in $T$, for $T$ is only a semigroup. We need to recall and introduce some basic notation as follows:
\begin{defn}
\begin{enumerate}
\item[(1)] $(T,X)$ is called \textbf{\textit{topologically transitive}} provided that for any nonempty open sets $U,V$ of $X$, $U\cap t^{-1}V\not=\emptyset$ for some $t\in T$. See, e.g.,~\cite{GH, KM}.

\item[(2)] $(T,X)$ is called an \textbf{\textit{M-semiflow}} if $(T,X)$ is \textit{topologically transitive} satisfying the so-called \textit{Bron\v{s}te\v{\i}n condition}; that is, the set of all almost periodic points of $(T,X)$ is dense in $X$. See, e.g.,~\cite{GW,KM,WLF}, and \cite{WCF15} for some characterizations of M-semiflow.

\item[(3)] $(T,X)$ is called \textbf{\textit{point-transitive}} in case there exists a point $x_0\in X$, written $x_0\in\textit{Trans}(T,X)$, such that the orbit $Tx_0:=\pi(T,x_0)$ is dense in $X$.
    See \cite[Def.~9.02]{GH} for the group case and see \cite[Def.~1.6(3)]{EEN} and \cite[Def.~3.1(2)]{KM} for the semigroup case.

\item[(4)] $(T,X)$ is called \textbf{\textit{syndetically transitive}} provided that for any nonempty open sets $U,V$ in $X$,
$N_T(U,V)=\left\{t\in T\,|\,U\cap t^{-1}V\not=\emptyset\right\}$
is syndetic in $T$. See \cite[Def.~9.02]{GH} and \cite[Def.~2.5]{WLF}.
\end{enumerate}
\end{defn}

Clearly, syndetically transitive implies topologically transitive.
It should be noted that in our general semigroup situation (even on a compact metric space $X$), ``topologically transitive'' and ``point-transitive'' are conceptually independent each other. Of course, $\overline{\textit{Trans}(T,X)}=X$ implies topologically transitive for any semiflow $(T,X)$.

\begin{BF}[{cf.~\cite[Proposition~3.2]{KM}}]\label{fac1}
Let $(T,X)$ be a topological semiflow with \textit{Polish phase space} $X$; then the following properties are equivalent:
\begin{enumerate}
\item[($\textrm{tr}_1)$] $(T,X)$ is topologically transitive.

\item[($\textrm{tr}_2)$] $\textit{Trans}(T,X)$ is a dense $G_\delta$-set of $X$.
\end{enumerate}
\end{BF}

\begin{BF}\label{fac2}
Let $(T,X)$ be a topological semiflow on a compact metric space $X$. Then the following properties are pairwise equivalent.
\begin{enumerate}
\item[$(1)$] $(T,X)$ is an M-semiflow.
\item[$(2)$] $\overline{\textit{Trans}(T,X)}=X$ and for any $x\in\textit{Trans}(T,X)$ and any nonempty open set $U$ in $X$, $N_T(x,U)$ is \textbf{\textit{piecewise syndetic}} (i.e., there exists a syndetic subset $S$ of $T$ such that for any compact set $A\subseteq S$, $At_A\subseteq N_T(x,U)$ for some $t_A\in N_T(x,U)$).
\end{enumerate}
Whenever $T$ is abelian, then each of $(1)$ and $(2)$ is equivalent to
\begin{enumerate}
\item[$(3)$] There exists a point $x_0\in X$ such that $N_T(x_0,U)$ is piecewise syndetic in $T$ for each nonempty open subset $U$ of $X$.
\end{enumerate}
\end{BF}

\begin{proof}
$(1)\Rightarrow(2)\Rightarrow(3)$ are trivial. Here there is no need that $T$ is abelian in $(2)\Rightarrow(3)$.

$(3)\Rightarrow(1)$: Let $T$ be abelian and let $x_0\in X$ satisfy that $N_T(x_0,U)$ is piecewise syndetic in $T$ for each nonempty open set $U\subset X$. Since $X$ is a Polish space, $\textit{Trans}(T,X)$ is a $G_\delta$ set. In addition, $Tx_0\subset\textit{Trans}(T,X)$. Then $(T,X)$ is topologically transitive by Basic Fact~\ref{fac1}.

On the other hand, take an open set $V$ with $\overline{V}\subset U$ and $V\not=\emptyset$. Since $N_T(x_0,V)$ is piecewise syndetic in $T$, we can find a syndetic set $S$ in $T$ such that for any compact set $A\subseteq S$ there is some $t_A\in N_T(x_0,V)$ with $At_A\subseteq N_T(x_0,V)$. Since $\{t_Ax_0\,|\,A\subseteq S\textrm{ compact}\}$ is a net in $X$, then there is a subnet $\{t_{A^\prime}\}$ of $\{t_Ax_0\}$ such that $t_{A^\prime}x_0\to y\in\overline{V}$ with $Sy\subseteq\overline{V}$. Let $\Delta$ be a $T$-minimal subset of $\overline{Ty}$. We can assert $\Delta\cap U\not=\emptyset$. Otherwise, there is a neighborhood $N$ of $\Delta$ with $\overline{V}\cap N=\emptyset$ such that $N_T(y,N)$ is thick and so $S\cap N_T(y,N)=\emptyset$; but this is a contradiction.

Moreover, $(2)\Rightarrow(1)$ may be completed by the second part of the proof of $(3)\Rightarrow(1)$.
\end{proof}

\begin{BF}\label{fac3}
Let $(T,X)$ be an M-semiflow on a Polish space $X$ with $T$ abelian. Then $\textit{Trans}(T,X)$ is an invariant residual set in $X$.
\end{BF}

\begin{proof}
This follows from Basic Fact~\ref{fac1} and the first part of the above proof of $(3)\Rightarrow(1)$ of Basic Fact~\ref{fac2}.
\end{proof}

\begin{BF}\label{fac4}
Let $(T,X,\pi)$ be any topological semiflow with $e\in T$. Then $(T,X,\pi)$ is topologically transitive iff every invariant open nonempty subset of $X$ is dense. So topologically transitive is also called topologically ergodic in some literature like \cite{Fur67,G76}.
\end{BF}

Let $(T,X,\pi)$ be a semiflow on a metric space $(X,d)$ with phase semigroup $T$; then we define a new metric on $X$ as follows:
\begin{gather*}
d_T(x,y)={\sup}_{t\in T}d(tx,ty)\quad \forall x,y\in X.
\end{gather*}
Our next result relates the metric $d_T$ to the sets of transitive and recurrent points. It is just \cite[Theorem~2.6]{AAB} in the special case that $T$ is the transition semigroup of a $\mathbb{Z}_+$-action.

\begin{prop}\label{pro2.2}
Let $(T,X)$ be a topological semiflow on a Polish space $(X,d)$. Then the metric $d_T$ is complete and with respect to $d_T$,
\begin{enumerate}
\item[$(1)$] $\textit{Trans}(T,X)$ is closed (but not necessarily $T$-invariant unless $X$ has no isolated point and $T$ is an $F$-semigroup (i.e. $T\backslash{Tt}$ is finite for any $t\in T$; cf.~\cite[Def.~2.1]{KM}));
\item[$(2)$] the almost periodic points of $(T,X)$ form a closed set.
\end{enumerate}
Moreover, if $(T,X)$ is equicontinuous, then $\textit{Trans}(T,X)$ is a closed subset of $X$.
\end{prop}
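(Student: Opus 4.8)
The plan rests on one elementary but crucial observation about the orbit-metric. Since we may assume $e\in T$, we have $d\le d_T$, and more importantly $d_T(x,y)<\delta$ forces $d(tx,ty)<\delta$ \emph{simultaneously for every} $t\in T$; thus $d_T$-closeness is exactly uniform closeness of the two entire orbits. Every assertion of the proposition will be a direct consequence of this comparison. Throughout I assume, as we may for a Polish space, that $d$ is a bounded complete compatible metric, so that $d_T=\sup_{t\in T}d(t\cdot,t\cdot)$ is a genuine finite metric dominating $d$. For completeness, let $\{x_n\}$ be $d_T$-Cauchy. Because $d\le d_T$ it is $d$-Cauchy, hence $d$-converges to some $x\in X$. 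Given $\varepsilon>0$ pick $N$ with $d_T(x_n,x_m)<\varepsilon$ for $n,m\ge N$, so $d(tx_n,tx_m)<\varepsilon$ for every $t$; fixing $t$ and $n\ge N$ and letting $m\to\infty$, continuity of $\pi_t$ gives $tx_m\to tx$, whence $d(tx_n,tx)\le\varepsilon$, and taking the supremum over $t$ yields $d_T(x_n,x)\le\varepsilon$.

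For statement $(1)$, to see $\textit{Trans}(T,X)$ is $d_T$-closed let $x_n\to x$ in $d_T$ with each $x_n$ transitive. Fix $w\in X$ and $\varepsilon>0$, choose $n$ with $d_T(x_n,x)<\varepsilon/2$, and use density of $Tx_n$ to find $t$ with $d(tx_n,w)<\varepsilon/2$; then $d(tx,w)\le d(tx,tx_n)+d(tx_n,w)<\varepsilon$. Hence $w\in\overline{Tx}$, and as $w$ is arbitrary $x\in\textit{Trans}(T,X)$. For the invariance remark, I would observe $T(tx)=(Tt)x$ and $Tx\setminus(Tt)x\subseteq(T\setminus Tt)x$, which is finite precisely when $T$ is an $F$-semigroup; in a space with no isolated points the deletion of finitely many points from a dense set leaves it dense, so $(Tt)x$ is dense and $tx$ is again transitive. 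Absent these hypotheses one exhibits a dense orbit a translate of which is not dense, showing invariance may fail.

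For statement $(2)$, recall that $x$ is almost periodic iff $A_\varepsilon(x):=\{t\in T: d(tx,x)<\varepsilon\}$ is syndetic for every $\varepsilon>0$. Suppose $x_n\to x$ in $d_T$ with each $x_n$ almost periodic and fix $\varepsilon>0$. Choosing $n$ with $d_T(x_n,x)<\varepsilon/3$ gives $d(tx,tx_n)<\varepsilon/3$ for all $t$ and in particular $d(x_n,x)<\varepsilon/3$. Since $A_{\varepsilon/3}(x_n)$ is syndetic, for $t$ in it the triangle inequality yields $d(tx,x)\le d(tx,tx_n)+d(tx_n,x_n)+d(x_n,x)<\varepsilon$, so $A_{\varepsilon/3}(x_n)\subseteq A_\varepsilon(x)$; a superset of a syndetic set being syndetic, $A_\varepsilon(x)$ is syndetic and $x$ is almost periodic. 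Finally, if $(T,X)$ is uniformly equicontinuous then for each $\varepsilon$ there is $\delta$ with $d(x,y)<\delta\Rightarrow d_T(x,y)\le\varepsilon$, which together with $d\le d_T$ makes $d$ and $d_T$ uniformly equivalent and hence topologically equivalent; so $d_T$-closed and $d$-closed coincide and $(1)$ gives that $\textit{Trans}(T,X)$ is closed in the original topology.

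I expect the only genuinely delicate point to be the parenthetical invariance claim in $(1)$: isolating exactly why the $F$-semigroup and no-isolated-point hypotheses are needed (density survives deletion of the finite set $(T\setminus Tt)x$ precisely in a perfect space) and producing a counterexample when they are dropped. Everything else reduces mechanically to the uniform comparison $d_T(x,y)<\delta\Rightarrow\sup_{t\in T}d(tx,ty)<\delta$, so once that identity is in hand the completeness and the two closedness claims follow in parallel.
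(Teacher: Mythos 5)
Your proposal is correct, and it is essentially the argument the paper intends: the paper's own ``proof'' consists of the single remark that the statement follows from a slight improvement of the proof of \cite[Theorem~2.6]{AAB} (the $\mathbb{Z}_+$-cascade case), and the engine of that proof is exactly the comparison you isolate---$d\le d_T$ together with ``$d_T(x,y)<\delta$ forces $d(tx,ty)<\delta$ for all $t$ simultaneously.'' Your write-up supplies the details the paper omits, including the adjustments needed for a general phase semigroup and a non-compact Polish $X$ (adjoining $e$, passing to a bounded complete compatible metric so that $d_T$ is finite). Two small repairs are worth making. First, the last claim of the proposition assumes $(T,X)$ is \emph{equicontinuous}, which in this paper's usage means equicontinuous at every point, whereas you prove it under \emph{uniform} equicontinuity; but your argument needs only that the identity map $(X,d)\to(X,d_T)$ be continuous, and pointwise equicontinuity says precisely that (for each $x$ and $\varepsilon$ there is $\delta$ with $d(x,y)<\delta\Rightarrow d_T(x,y)\le\varepsilon$), so every $d_T$-closed set is $d$-closed and item $(1)$ finishes the proof---no uniform equivalence of the two metrics is required. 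Second, for the parenthetical in $(1)$ you promise but do not exhibit a counterexample to invariance; one is cheap: on the Polish space $X=\mathbb{Z}_+\cup\{\infty\}$ (one-point compactification) with the cascade $f(n)=n+1$, $f(\infty)=\infty$, the point $0$ is transitive but $f(0)=1$ is not, so $\textit{Trans}(T,X)=\{0\}$ fails to be invariant (here $X$ has isolated points). Neither point affects the substance: the completeness of $d_T$ and the closedness claims $(1)$ and $(2)$ are fully and correctly proved.
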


\begin{proof}
This follows from a slight improvement of the proof of \cite[Theorem~2.6]{AAB}. Thus we omit the details here.
\end{proof}
\subsubsection{Non-minimal M-semiflow is sensitive}
The following Lemma~\ref{lem2.3} generalizes \cite[Theorem~3.3]{WLF} from abelian acting semigroup to any acting semigroup.

\begin{lem}\label{lem2.3}
If $(T,X)$ is an M-semiflow with $X$ an arbitrary topological space, then it is syndetically transitive.
\end{lem}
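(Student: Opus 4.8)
The plan is to verify syndetic transitivity directly from the two defining properties of an M-semiflow (topological transitivity plus the Bron\v{s}te\v{\i}n condition), exploiting the duality recorded in the standing notation: a set $B\subseteq T$ is syndetic if and only if $B$ meets every thick subset of $T$. So fix nonempty open sets $U,V\subseteq X$; the goal is to show that $N_T(U,V)=\{t\in T\,|\,U\cap t^{-1}V\neq\emptyset\}$ is syndetic, i.e.\ that it meets every thick $A\subseteq T$.

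First I would use topological transitivity to produce a single ``connecting'' time: there exist $r\in T$ and $u\in U$ with $\pi_r(u)\in V$. Continuity of $\pi_r$ then lets me shrink $U$ to the open set $U_0=U\cap\pi_r^{-1}(V)$, so that $u\in U_0\subseteq U$ and $\pi_r(U_0)\subseteq V$. Now the Bron\v{s}te\v{\i}n condition enters: since almost periodic points are dense and $U_0$ is nonempty open, I may choose an almost periodic point $z\in U_0$, and by the definition of almost periodicity the return-time set $N_T(z,U_0)=\{s\in T\,|\,\pi_s(z)\in U_0\}$ is syndetic. The payoff of this setup is the inclusion
\[
r\cdot N_T(z,U_0)\subseteq N_T(U,V),
\]
because for $s\in N_T(z,U_0)$ one has $\pi_{rs}(z)=\pi_r(\pi_s(z))\in\pi_r(U_0)\subseteq V$ while $z\in U_0\subseteq U$, so $rs$ is a witness for $rs\in N_T(U,V)$.

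It then remains to transfer ``syndetic'' for $N_T(z,U_0)$ to ``syndetic'' for $N_T(U,V)$ across the left translation by $r$. Rather than proving directly that a left translate of a syndetic set is syndetic, I would test against an arbitrary thick set $A$. The key observation is that the left pre-translate $\{s\in T\,|\,rs\in A\}$ is again thick: given any compact $K\subseteq T$, the set $rK$ is compact (left multiplication by $r$ is continuous, as $T$ is a topological semigroup), so thickness of $A$ provides $t$ with $(rK)t\subseteq A$, and associativity gives $Kt\subseteq\{s\,|\,rs\in A\}$. Since $N_T(z,U_0)$ is syndetic it must meet the thick set $\{s\,|\,rs\in A\}$, say at $s$; then $rs\in A$ and $rs\in N_T(U,V)$, whence $N_T(U,V)\cap A\neq\emptyset$. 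As $A$ was an arbitrary thick set, $N_T(U,V)$ is syndetic, and $(T,X)$ is syndetically transitive.

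I expect the only genuinely delicate point to be the bookkeeping forced by working in a mere semigroup instead of a group: the absence of inverses prevents me from simply translating the syndetic return-time set, which is exactly why I route the argument through the thick/syndetic duality and through the thickness of the pre-translate $\{s\,|\,rs\in A\}$. Everything else---transitivity yielding one connecting time, continuity yielding the shrunken open set $U_0$, and the Bron\v{s}te\v{\i}n condition supplying an almost periodic point inside $U_0$---is routine.
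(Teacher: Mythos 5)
Your proof is correct and follows essentially the same route as the paper's: topological transitivity yields the open set $U_0=U\cap r^{-1}V$ (the paper's $V_1$), the Bron\v{s}te\v{\i}n condition supplies an almost periodic point $z\in U_0$, and the syndetic return set $N_T(z,U_0)$ satisfies $r\,N_T(z,U_0)\subseteq N_T(U,V)$, exactly as in the paper. The only divergence is the final bookkeeping: the paper asserts directly that the left translate of a syndetic set is syndetic (take $K\mapsto rK$ as the new compact witness), whereas you reach the same conclusion via the thick/syndetic duality by checking that the pre-translate $\{s\,|\,rs\in A\}$ of a thick set $A$ is thick --- an equivalent, equally routine step.
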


\begin{proof}
Given any two nonempty open sets $U,V$ in $X$, there exists some element $s\in T$ such that $V_1:=U\cap s^{-1}V\not=\emptyset$ for $(T,X)$ is topologically transitive.
Now take an almost periodic point $x\in V_1$. Then for $t\in N_T(x,V_1)$, we have
$x\in t^{-1}V_1$; so $x\in(st)^{-1}V$ and then $U\cap(st)^{-1}V\not=\emptyset$.
Thus $sN_T(x,V_1)\subseteq N_T(U,V)$. Since $N_T(x,V_1)$ and then $sN_T(x,V_1)$ both are syndetic in $T$, hence $N_T(U,V)$ is syndetic in $T$. This proves Lemma~\ref{lem2.3}.
\end{proof}

It is interesting to note that the phase space $X$ need not be compact in Lemma~\ref{lem2.3} and the following Lemma~\ref{lem2.4}.

\begin{lem}\label{lem2.4}
Let $X$ be a uniform space with a uniformity $\mathscr{U}_X$. If $(T,X)$ is syndetically transitive and is non-sensitive to initial conditions, then it is minimal.
\end{lem}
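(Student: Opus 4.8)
The plan is to first extract an equicontinuity point and then show that its orbit is simultaneously dense and minimal. By the remark in $\S\ref{sec1.1}$, since $(T,X)$ is non-sensitive it is equicontinuous at some point $x_0\in X$; that is, for every $\varepsilon\in\mathscr{U}_X$ there is a $\delta\in\mathscr{U}_X$ (which we may take symmetric and contained in $\varepsilon$) such that $y\in\delta[x_0]$ forces $(tx_0,ty)\in\varepsilon$ for all $t\in T$. I would then prove two claims: (A) $x_0$ is an almost periodic point, so that $\overline{Tx_0}$ is a minimal set; and (B) $\overline{Tx_0}=X$. Together these give that $X=\overline{Tx_0}$ is minimal.

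For Claim (A), fix a neighborhood $W$ of $x_0$ and choose entourages $\beta,\alpha$ with $\beta[x_0]\subseteq W$, with $\alpha$ symmetric and $\alpha\circ\alpha\subseteq\beta$. Feeding $\alpha$ into equicontinuity at $x_0$ yields a symmetric $\delta\subseteq\alpha$ as above, and I set $U=\mathrm{int}\,\delta[x_0]$, an open neighborhood of $x_0$. Here is where syndetic transitivity enters: $N_T(U,U)$ is syndetic. For any $t\in N_T(U,U)$ pick $y\in U$ with $ty\in U$; since $y\in\delta[x_0]$, equicontinuity gives $(tx_0,ty)\in\alpha$, while $ty\in U\subseteq\alpha[x_0]$ gives $(ty,x_0)\in\alpha$, whence $(tx_0,x_0)\in\alpha\circ\alpha\subseteq\beta$ and so $tx_0\in W$. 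Thus $N_T(U,U)\subseteq N_T(x_0,W)$, so $N_T(x_0,W)$ is syndetic; as $W$ is arbitrary, $x_0$ is almost periodic and $\overline{Tx_0}$ is minimal.

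For Claim (B), let $z\in X$ and a neighborhood $W$ of $z$ be arbitrary; I must produce $t\in T$ with $tx_0\in W$. Choose $\beta$ with $\beta[z]\subseteq W$ and symmetric $\alpha$ with $\alpha\circ\alpha\subseteq\beta$; equicontinuity at $x_0$ supplies $\delta$, and I put $U=\mathrm{int}\,\delta[x_0]$ and $V=\mathrm{int}\,\alpha[z]$, both nonempty open. By topological transitivity $N_T(U,V)\neq\emptyset$, so there are $t\in T$ and $y\in U$ with $ty\in V$. Then $(tx_0,ty)\in\alpha$ by equicontinuity and $(ty,z)\in\alpha$ since $ty\in\alpha[z]$, giving $(tx_0,z)\in\alpha\circ\alpha\subseteq\beta$, i.e.\ $tx_0\in\beta[z]\subseteq W$. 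Hence $Tx_0$ meets every neighborhood of every point, so $\overline{Tx_0}=X$; combined with (A), $X$ is minimal.

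The main obstacle I anticipate is not the entourage bookkeeping (routine in uniform spaces) but justifying the passage ``$x_0$ almost periodic $\Rightarrow\overline{Tx_0}$ minimal'' in the bare topological-semigroup generality used here, where one lacks inverses and must argue with the left-syndetic notion of $\S\ref{sec1.4}$; this is the standard characterization of minimal sets (cf.~\cite{GH,Aus,CD}), but its proof for non-group $T$ requires care. A secondary point to verify is that non-sensitivity genuinely yields an equicontinuity point in this uniform-space setting, which is exactly the content of the cited remark, and that each $\delta[x_0]$ is a neighborhood of $x_0$ so that the open sets $U,V$ fed into (syndetic) transitivity are legitimate and nonempty.
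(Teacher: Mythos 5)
Your proposal is correct and is essentially the paper's own argument: extract an equicontinuity point $x_0$ from non-sensitivity, show it is a transitive point (your Claim (B); the paper delegates this to \cite[Lemma~3.3]{KM} and \cite[Exercise~2.7]{Aus}), and show it is almost periodic via the inclusion $N_T(\delta[x_0],\delta[x_0])\subseteq N_T(x_0,\varepsilon[x_0])$ obtained from syndetic transitivity plus equicontinuity at $x_0$ (your Claim (A)). The concluding step---that an almost periodic point with dense orbit forces minimality of $X$ in this semigroup/uniform-space generality---is likewise invoked rather than proved in the paper, so your flagged reliance on that standard characterization matches the paper's own level of detail.
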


\begin{proof}
Since $(T,X)$ is not sensitive, there is an $x_0\in X$ at which $(T,X)$ is equicontinuous. Further by the topological transitivity, it is easy to check that $x_0$ is a transitive point of $(T,X)$ (see, e.g.,~\cite[Lemma~3.3]{KM} or \cite[Exercise~2.7]{Aus}). Thus we only need to show that $x_0$ is almost periodic for $(T,X)$ by the fact that a uniform space is regular.
For this, let $\varepsilon\in\mathscr{U}_X$ be arbitrary; then by syndetically transitivity and equicontinuity at $x_0$, we can take an index $\delta\in\mathscr{U}_X$ such that
$N_T(\delta[x_0],\delta[x_0])\subseteq N_T(x_0,\varepsilon[x_0])$, whence $N_T(x_0,\varepsilon[x_0])$ is syndetic in $T$. This concludes the proof of Lemma~\ref{lem2.4}.
\end{proof}

Now the following result completes the proof of Proposition~\ref{prop2.8}.(2), which means that Devaney chaos implies sensitive at the same time. It should be mentioned that the non-minimality of $(T,X,\pi)$ is essential for our conclusion.

\begin{prop}\label{prop2.5}
Let $(T,X,\pi)$ be a non-minimal M-semiflow on a uniform space $(X,\mathscr{U}_X)$; then it is sensitive to initial conditions.
\end{prop}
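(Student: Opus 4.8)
The plan is to derive Proposition~\ref{prop2.5} as an immediate contrapositive combination of the two preceding lemmas, with essentially no new work beyond verifying that their hypotheses are in force. The key observation is that Lemma~\ref{lem2.3} and Lemma~\ref{lem2.4} together form a chain: being an M-semiflow feeds the first, its output (syndetic transitivity) feeds the second, and the second's conclusion (minimality) is precisely what the hypothesis of the proposition forbids.

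First I would invoke Lemma~\ref{lem2.3}. Since $(T,X,\pi)$ is an M-semiflow by assumption, that lemma applies verbatim (it requires nothing on $X$ beyond being a topological space, which a uniform space certainly is), and we conclude that $(T,X,\pi)$ is syndetically transitive. This is the only place where the Bron\v{s}te\v{\i}n condition (density of almost periodic points) is used; it is exactly what powers the passage from mere topological transitivity to \emph{syndetic} transitivity inside Lemma~\ref{lem2.3}.

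Next I would argue by contradiction. Suppose, toward a contradiction, that $(T,X,\pi)$ is \emph{not} sensitive to initial conditions. Then $(T,X,\pi)$ is syndetically transitive (just established) and non-sensitive, so Lemma~\ref{lem2.4} applies and forces $(T,X,\pi)$ to be minimal. But this directly contradicts the standing hypothesis that $(T,X,\pi)$ is non-minimal. Hence the assumption was untenable, and $(T,X,\pi)$ must be sensitive to initial conditions, completing the proof.

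I do not expect any genuine obstacle here, since all the analytic content has already been carried in Lemma~\ref{lem2.3} (where syndetic transitivity is extracted via the almost periodic points and the syndeticity of $N_T(x,V_1)$) and in Lemma~\ref{lem2.4} (where equicontinuity at a transitive point is promoted to almost periodicity of that point, yielding minimality). The only point demanding a line of care is the remark already flagged in the statement preceding the proposition: the non-minimality hypothesis is \emph{essential}, because without it the contradiction evaporates and the system may well be minimal and equicontinuous (hence non-sensitive), as Example~\ref{e1.4} illustrates. Thus the whole proof reduces to citing the two lemmas in the correct logical order.
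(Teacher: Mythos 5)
Your proof is correct and essentially identical to the paper's own argument: Lemma~\ref{lem2.3} yields syndetic transitivity, and Lemma~\ref{lem2.4}, applied contrapositively against the non-minimality hypothesis, forces sensitivity. One quibble with your closing aside only: Example~\ref{e1.4} is minimal but \emph{sensitive} (as the paper itself notes via Proposition~\ref{prop2.8}), so it does not illustrate the non-sensitive minimal case--an equicontinuous minimal flow such as an irrational rotation is the right illustration of why non-minimality cannot be dropped.
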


\begin{proof}
By Lemma~\ref{lem2.3}, $(T,X,\pi)$ is syndetically transitive. So if it were not sensitive, then by Lemma~\ref{lem2.4} it is minimal. This is a contradiction.
\end{proof}

Proposition~\ref{prop2.5} generalizes \cite[Theorem~1.3]{GW} and \cite[Theorem~2.5]{AAB} from cascades on compact metric spaces to any semigroup actions on uniform spaces.

\subsubsection{M-semiflow is sensitive}
The following is a generalization of \cite[Corollary~3.2]{AK} from cascade $(X,f)$ on a compact Hausdorff space $X$ to any topological semiflow $(T,X,\pi)$ with abelian acting semigroup $T$, which is comparable with \cite[Theorem~5.6]{KM}.

\begin{prop}\label{prop2.6}
Let $(T,X,\pi)$ be a minimal topological semiflow on a uniform space $X$ such that
\begin{itemize}
\item either (1)\,---\,$T$ is abelian and $X$ is a compact Hausdorff space
\item or (2)\,---\,$T$ is an $F$-semigroup (i.e. $T\setminus Tt$ is finite for any $t\in T$).
\end{itemize}
Then $(T,X,\pi)$ is either equicontinuous or sensitive to initial conditions.
\end{prop}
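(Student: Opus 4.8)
The plan is to prove the dichotomy in its only non-trivial direction: assuming $(T,X,\pi)$ is \emph{not} sensitive, I would show it is equicontinuous (if it is sensitive there is nothing to prove). By the elementary remark recorded in $\S\ref{sec1.1}$, non-sensitivity is equivalent to the existence of at least one point $x_0\in X$ at which $(T,X,\pi)$ is equicontinuous. Thus the whole content is to upgrade equicontinuity at the single point $x_0$ to global (uniform) equicontinuity; minimality, together with hypothesis (1) or (2), is exactly what makes this upgrade possible.

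First I would encode equicontinuity uniformly. For $\varepsilon\in\mathscr{U}_X$ set $\varepsilon^T=\{(x,y)\,|\,(tx,ty)\in\varepsilon\ \forall t\in T\}$; these entourages generate a (generally finer) uniform structure $\mathscr{U}_X^T$ for which every transition map $\pi_t$ is non-expansive, and equicontinuity of the semiflow is precisely the statement that $\mathscr{U}_X^T$ and $\mathscr{U}_X$ induce the same topology on $X$, while equicontinuity \emph{at} $x_0$ says they agree only at $x_0$. In the metric model this is the invariant metric $d_T(x,y)=\sup_{t\in T}d(tx,ty)$, which is complete by Proposition~\ref{pro2.2}; then each $\pi_t$ is $1$-Lipschitz for $d_T$, and $x_0$ is an equicontinuity point exactly when the identity $(X,d)\to(X,d_T)$ is continuous at $x_0$. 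The goal then becomes to prove that the set $\mathrm{Eq}$ of equicontinuity points is all of $X$.

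The crux is transporting equicontinuity off $x_0$. By minimality the orbit $Tx_0$ is dense, so it suffices to move equicontinuity along the orbit and pass to the closure; the difficulty is that in the semigroup setting $\pi_t$ need be neither injective nor open, so $tx_0\in\mathrm{Eq}$ is not automatic. This is where the two hypotheses enter, and I expect to handle them by two different mechanisms. Under (2) (an $F$-semigroup, $T\setminus Tt$ finite) the cofinite-range property forces $d_T(tx,ty)$ and $d_T(x,y)$ to differ only through the finitely many transition maps indexed by $T\setminus Tt$, which I would use to show that $\mathrm{Eq}$ is forward invariant and then, combining the completeness of $d_T$ with a Baire-category argument in the spirit of \cite{AAB}, that the residual invariant set $\mathrm{Eq}$ must be all of $X$ in a minimal system. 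Under (1) ($T$ abelian, $X$ compact Hausdorff) I would instead use the enveloping semigroup $E=E(T,X)\subseteq X^X$: it is compact, it commutes with every $\pi_t$ because $T$ is abelian (so $p\pi_t=\pi_t p$ for all $p\in E$), and it acts minimally so that $Ex_0=\overline{Tx_0}=X$; equicontinuity at $x_0$ then says that every element of $E$ is continuous at $x_0$, and propagating this through the commuting, minimal action should force $E$ to consist of homeomorphisms and to be a compact topological transformation group, which is equivalent to equicontinuity.

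Finally, once $\mathrm{Eq}=X$ I would upgrade pointwise equicontinuity to \emph{uniform} equicontinuity. Under (1) this is the standard compactness argument, pointwise equicontinuity of a family of maps on a compact space being automatically uniform. Under (2) the same conclusion should follow from the completeness and $T$-invariance of $d_T$ together with $\mathrm{Eq}=X$, without invoking compactness, after replacing the metric formulation by its entourage analogue to cover a general uniform phase space. The main obstacle throughout is the transport step: isolating the precise role of each hypothesis so that a lone point of equicontinuity spreads over the whole minimal system. I expect the abelian--compact case to hinge on the Ellis/enveloping-semigroup structure and the $F$-semigroup case to hinge on the invariant complete metric $d_T$ and Baire category, which is why the statement naturally splits into the two alternatives.
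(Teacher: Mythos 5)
Your reduction (non-sensitivity yields one equicontinuity point $x_0$; minimality must spread it over $X$) is the right frame, and it is also the paper's frame, but both of your propagation mechanisms have genuine gaps. In case (1), the sentence ``propagating this through the commuting, minimal action should force $E$ to consist of homeomorphisms'' is exactly the hard step, and no argument is offered. The natural attempt fails: to get continuity of $p\in E$ at an arbitrary point you would write that point as $qx_0$ with $q\in E$ (using $Ex_0=X$) and compose, but $q$ is itself only a pointwise limit of transition maps and is not known to be continuous, so continuity at $x_0$ cannot be pushed along $E$-orbits this way. Moreover, even if you established $E\subseteq C(X,X)$, the equivalence you then invoke is essentially the theorem that a minimal WAP system is equicontinuous --- a substantive result whose version for actions of a mere abelian \emph{semigroup} (which is all that is assumed here) is not off-the-shelf. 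The paper's proof avoids the enveloping semigroup entirely: by commutativity, $(sx_0,stx_0)=(sx_0,t(sx_0))\in\varepsilon$ for all $s\in T$ and all $t\in A:=N_T(x_0,\delta[x_0])$; density of $Tx_0$ then gives $Az\subseteq\varepsilon[z]$ for every $z\in X$, and $A$ is syndetic because $x_0$ is almost periodic in a minimal compact system. Hence the semiflow is uniformly almost periodic, and equicontinuity follows from the cited result \cite[Theorem~1.2]{DX}.

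In case (2) there are two gaps. First, forward invariance of $\mathrm{Eq}$ does not follow from your $d_T$-decomposition: a point close to $tx$ need not be of the form $ty$ with $y$ close to $x$ (you yourself note that $\pi_t$ is neither injective nor open, but the invariance claim needs exactly such a property). What the identity
\begin{equation*}
d_T(x,y)=\max\Bigl(d_T(tx,ty),\ \max_{s\in T\setminus Tt}d(sx,sy)\Bigr)
\end{equation*}
actually gives is the opposite, backward direction: if $\pi_t$ carries a neighborhood $W$ of $y$ into $\delta[x_0]$, then all times in $Tt$ are controlled by equicontinuity at $x_0$ and the finitely many times in $T\setminus Tt$ by continuity of finitely many maps, so $y\in\mathrm{Eq}$. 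Second, and more seriously, your concluding principle --- that a residual invariant subset of a minimal system must be the whole space --- is false: for an irrational rotation of the circle, the complement of a single orbit is residual, invariant and proper. (In addition, the Baire-category and completeness arguments presuppose a Polish or at least Baire phase space, whereas in case (2) $X$ is only a uniform space.) The correct argument, which is what \cite[Proposition~5.1]{KM} does and what the paper cites for this case, needs no Baire category at all: by minimality every $y\in X$ has dense orbit, so $ty\in\delta[x_0]$ for some $t$; continuity of $\pi_t$ gives a neighborhood $W$ of $y$ with $\pi_tW\subseteq\delta[x_0]$, and the two-block control above then shows directly that $y$ is an equicontinuity point.
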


\begin{proof}
Under condition (2) the statement follows from a slight improvement of the proof of \cite[Proposition~5.1]{KM}.
Next, let $(T,X,\pi)$ be non-sensitive with $T$ abelian and with $X$ a compact Hausdorff space; then there exists a point $x_0\in  X$ at which $(T,X,\pi)$ is equicontinuous. We are going to show $(T,X,\pi)$ is uniformly almost periodic.

For that, given any index $\varepsilon\in\mathscr{U}_X$, let $\delta\in\mathscr{U}_X$ be such that
$(x_0,y)\in\delta$ implies that $(sx_0,sy)\in\varepsilon$ for all $s\in T$.
Now for any $t\in N_T(x_0,\delta[x_0])$ we have $(x_0,tx_0)\in\delta$ and then
$$
(sx_0,stx_0)=(sx_0,t(sx_0))\in\varepsilon\quad\forall s\in T.
$$
Since $Tx_0$ is dense in $X$, hence
$(z,tz)\in\varepsilon\; \forall z\in X$ and $t\in A:=N_T(x_0,\delta[x_0])$; that is, $Az\subseteq\varepsilon[z]$ for each $z\in X$.
Because $N_T(x_0,\delta[x_0])$ is syndetic in $T$, $(T,X,\pi)$ is uniformly almost periodic.

Further by \cite[Theorem~1.2]{DX}, it follows that $(T,X,\pi)$ is equicontinuous.

This thus concludes the proof of Proposition~\ref{prop2.6}.
\end{proof}

Clearly our Proposition~\ref{prop2.6} is also a generalization of the Auslander-Yorke dichotomy theorem \cite[Corollary~2]{AY}.

\begin{cor}[Dichotomy theorem for abelian semigroups]\label{cor2.7}
Let $(T,X,\pi)$ be an M-semiflow (or more generally, a syndetically transitive semiflow) on a compact Hausdorff space with $T$ abelian. Then it is either minimal equicontinuous or sensitive to initial conditions.
\end{cor}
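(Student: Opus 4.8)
The plan is to prove the dichotomy in contrapositive form: assuming that $(T,X,\pi)$ is \emph{not} sensitive to initial conditions, I would show it must be minimal equicontinuous. The two alternatives are genuinely exclusive, since an equicontinuous semiflow is equicontinuous at every point and hence, by the remark following the definition of sensitivity in $\S\ref{sec1.1}$, cannot be sensitive. Thus it suffices to establish that non-sensitivity forces minimality and equicontinuity simultaneously.

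First I would dispose of the two versions of the hypothesis. If $(T,X,\pi)$ is only assumed to be an M-semiflow, then Lemma~\ref{lem2.3} already guarantees that it is syndetically transitive; and the ``more generally'' clause of the statement lets me take syndetic transitivity as the operative assumption from the outset. So in either case I may work with a syndetically transitive semiflow on the compact Hausdorff space $X$ with $T$ abelian.

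Next, under the standing hypothesis that $(T,X,\pi)$ is not sensitive, I would invoke Lemma~\ref{lem2.4}, which asserts that a syndetically transitive, non-sensitive semiflow on a uniform space is minimal. This secures minimality at once, leaving me with a \emph{minimal} semiflow still satisfying $T$ abelian and $X$ compact Hausdorff.

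Finally I would apply Proposition~\ref{prop2.6} in its first alternative (case (1): $T$ abelian and $X$ compact Hausdorff), which tells us that such a minimal semiflow is either equicontinuous or sensitive. Since we are assuming non-sensitivity, that built-in dichotomy forces equicontinuity, and combining with the minimality obtained above yields exactly ``minimal equicontinuous,'' the alternative complementary to sensitivity. I do not anticipate any real obstacle: the whole argument is an assembly of Lemma~\ref{lem2.3}, Lemma~\ref{lem2.4}, and Proposition~\ref{prop2.6}, and the only point requiring care is to verify that the hypotheses needed by Proposition~\ref{prop2.6}(1)---abelian phase semigroup and compact Hausdorff phase space---are precisely those inherited by the minimal system produced in the previous step, which they are.
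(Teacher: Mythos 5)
Your proof is correct and follows essentially the same route as the paper's: Lemma~\ref{lem2.3} to pass from M-semiflow to syndetic transitivity, Lemma~\ref{lem2.4} to extract minimality from non-sensitivity, and Proposition~\ref{prop2.6}(1) to conclude equicontinuity. In fact you cite the right result at the final step---the paper's own proof reads ``equicontinuous by Proposition~\ref{prop2.5},'' which is evidently a citation slip for Proposition~\ref{prop2.6}, since Proposition~\ref{prop2.5} asserts sensitivity of non-minimal M-semiflows rather than equicontinuity of minimal non-sensitive ones.
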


\begin{proof}
Since $(T,X,\pi)$ is an M-semiflow, then Lemma~\ref{lem2.3} follows that $(T,X,\pi)$ is syndetically transitive. Furthermore, if $(T,X,\pi)$ is not sensitive, then it is minimal by Lemma~\ref{lem2.4} and so it is equicontinuous by Proposition~\ref{prop2.5}.
\end{proof}

Proposition~\ref{prop2.5} together with the above Corollary~\ref{cor2.7} implies the following.

\begin{prop}\label{prop2.8}
Let $(T,X,\pi)$ be an M-semiflow on a uniform space $X$, which satisfies one of the following conditions:
\begin{enumerate}
\item[$(1)$] $(T,X,\pi)$ is non-equicontinuous with $T$ abelian and with $X$ a compact Hausdorff space;
\item[$(2)$] it is non-minimal.
\end{enumerate}
Then $(T,X,\pi)$ is sensitive to initial conditions.
\end{prop}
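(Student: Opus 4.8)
The plan is to obtain both cases as immediate consequences of results already proved in this subsection, so that the present statement is really just their logical consolidation rather than a new argument. I would split along the two stated hypotheses and treat each by citing the appropriate earlier result.

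Under hypothesis $(2)$, the system is a non-minimal M-semiflow on the uniform space $X$, which is precisely the setting of Proposition~\ref{prop2.5}. I would therefore invoke that proposition directly to conclude sensitivity, with no further work required.

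Under hypothesis $(1)$, we are given $T$ abelian, $X$ compact Hausdorff, and $(T,X,\pi)$ an M-semiflow that is non-equicontinuous. These match exactly the hypotheses of the dichotomy Corollary~\ref{cor2.7}, which asserts that such a system is either minimal equicontinuous or sensitive to initial conditions. Since equicontinuity is excluded by assumption, the system cannot be minimal equicontinuous, and hence the only surviving alternative is that it is sensitive. This settles case $(1)$.

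The one thing to verify — and it is bookkeeping rather than a genuine difficulty — is that the hypotheses of the two cited results line up verbatim with the two cases: every M-semiflow is by definition topologically transitive with dense almost periodic points, so Corollary~\ref{cor2.7} applies as stated in case $(1)$, while Proposition~\ref{prop2.5} applies as stated in case $(2)$. I expect no real obstacle at this stage; the substantive work has already been absorbed into the proofs of Lemmas~\ref{lem2.3} and~\ref{lem2.4}, of Proposition~\ref{prop2.5} (via the route ``syndetically transitive $+$ non-sensitive $\Rightarrow$ minimal''), and of Proposition~\ref{prop2.6} (the uniform-almost-periodicity argument feeding the dichotomy). Consequently the proof reduces to invoking those two results according to which hypothesis is in force.
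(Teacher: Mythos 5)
Your proposal is correct and is exactly the paper's own argument: the paper derives Proposition~\ref{prop2.8} by combining Proposition~\ref{prop2.5} (for the non-minimal case) with the dichotomy of Corollary~\ref{cor2.7} (for the abelian, compact Hausdorff, non-equicontinuous case). Your case split and the way you discharge each hypothesis match the paper verbatim, so there is nothing to add.
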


Recall that $T$ is called a \textit{C-semigroup} if $T\setminus Tt_0$, for each $t_0\in T$, is relatively compact in $T$ (see \cite{KM}). Since under the discrete topology, C-semigroup is just an F-semigroup. Thus if $(T,X)$ is an M-semiflow with $T$ a C-semigroup on a Polish space $X$, then it is
either equicontinuous or sensitive or minimal. This is just Main result of \cite{KM}. However, an abelian semigroup need not be a C-semigroup.
Therefore, Proposition~\ref{prop2.8} (Corollary~\ref{cor2.7} and Proposition~\ref{prop2.5}) also generalizes \cite[Main result]{KM}. If $T$ is assumed abelian acting on a compact metric space $X$, then Proposition~\ref{prop2.5} is a special case of \cite[Theorem~3.10]{WCF15}.

It should be noticed that if $T$ is non-abelian, then the conclusion of Proposition~\ref{prop2.6} is not necessarily true. However, we do not know whether or not the conclusion of Corollary~\ref{cor2.7} is still true if $T$ is non-abelian.

\subsection{Devaney chaos of semiflows}\label{sec2.2}
Let $\pi\colon T\times X\rightarrow X$ or $(T,X,\pi)$ be a topological semiflow on a topological space $X$ with phase semigroup $T$, where $T$ is a topological semigroup not necessarily discrete.

Following \cite[Def.~3.12]{SKBS} we now introduce Devaney chaos as follows:

\begin{defn}\label{def2.9}
$(T,X,\pi)$ is called \textbf{\textit{Devaney chaos}} if it satisfies the following conditions:
\begin{enumerate}
\item[(1)] $(T,X,\pi)$ is topologically transitive.
\item[(2)] The set of periodic points of $(T,X,\pi)$ is dense in $X$.
\item[(3)] $(T,X,\pi)$ is not minimal.
\end{enumerate}
\end{defn}

\begin{note}
Since here our ``periodic point'' is weaker than that of Schneider et al.~\cite{SKBS}, hence this chaos condition is weaker than that of~\cite[Def.~3.12]{SKBS}.
\end{note}

\begin{note}
Since $T$ need not be discrete, hence a periodic orbit here is not necessarily finite. Therefore, (1) + (2) $\not\Rightarrow$ (3) even if $X$ has no isolated points. Moreover, the most important classical $C^0$-flow on a manifold with $T=\mathbb{R}$ has no non-trivial ``discrete'' periodic orbits. Hence our Devaney chaos conditions are weaker than that of \cite{WLF, CC, Ar}.
\end{note}

\begin{note}
Clearly (1) + (2) $\Rightarrow$ $(T,X,\pi)$ is an M-semiflow.
\end{note}

\begin{note}
Condition (3) is not ignorable for our Devaney chaos, for example, the $\mathbb{R}$-acting $C^0$-flow on the unit circle $\mathbb{T}=\mathbb{R}/\mathbb{Z}$ by rotation $(t,x)\mapsto t+x\pmod{1}$ is topologically transitive with dense periodic points (in fact it is pointwise periodic) but with no interesting ``chaos'' dynamics.

Of course, if $X$ is infinite like with no isolated point and $T$ is a group or induced by a cascade with the discrete topology, then (1) + (2) $\Rightarrow$ (3).
\end{note}

\begin{note}
Since $X$ is not necessarily a Polish space, there does not need to have a transitive point in general.
\end{note}

On a Hausdorff uniform space $X$, every periodic orbit in the sense of Schneider et al.~\cite{SKBS} is a compact minimal set. And so two distinct periodic orbits $Tx$ and $Ty$ in \cite{SKBS} are far away (i.e. one can find an index $\beta\in\mathscr{U}_X$ with $\beta[Tx]\cap\beta[Ty]=\emptyset$). This is an important point for the proof of Schneider et al.~\cite{SKBS}.

However in our situation that $X$ is only a uniform space (or equivalently a completely regular space) and that a periodic point is only weakly defined in $\S\ref{sec1.4}$, although $\overline{Tx}$ is minimal for any periodic point $x$ in our sense, yet $Tx$ itself need not be a compact closed set in $X$ and so any two distinct periodic orbits are not necessarily far away.

So the arguments of Banks et al.~\cite{BB} does not work in our setting here. However, as a result of Proposition~\ref{prop2.8} (or Proposition~\ref{prop2.5}), we can obtain the following, which covers \cite{BB,CC,SKBS} and works for classical $C^0$-flows on manifolds with $T=\mathbb{R}$.

\begin{prop}\label{prop2.10}
Let $(T,X,\pi)$ be a topological semiflow on a uniform topological space $X$. If it is of Devaney chaos, then it is sensitive to initial conditions and thus $X$ has no isolated points.
\end{prop}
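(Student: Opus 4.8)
The plan is to split the statement into its two assertions and settle each by reducing to results already in hand. For the first --- Devaney chaos implies sensitivity --- I would recognize that a Devaney chaotic semiflow is exactly a non-minimal M-semiflow and then invoke Proposition~\ref{prop2.8}(2) (equivalently Proposition~\ref{prop2.5}). To make the identification, I would first check that every periodic point is almost periodic: if $x$ is periodic then its stabilizer $S_x=\{t\in T\,|\,tx=x\}$ is syndetic, and for any neighborhood $U$ of $x$ one has $S_x\subseteq N_T(x,U)$, so $N_T(x,U)$ contains a syndetic set and is therefore itself syndetic. Hence density of periodic points (Definition~\ref{def2.9}(2)) forces density of almost periodic points, which together with topological transitivity (Definition~\ref{def2.9}(1)) makes $(T,X,\pi)$ an M-semiflow. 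Since Definition~\ref{def2.9}(3) supplies non-minimality, Proposition~\ref{prop2.8}(2) immediately yields sensitivity to initial conditions.

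For the second assertion --- a sensitive semiflow has no isolated points --- I would argue by contradiction, using only the definition of sensitivity together with the fact that every entourage of a uniformity contains the diagonal. Suppose $x_0\in X$ were isolated; then $\{x_0\}$ is open, so there is some $\delta_0\in\mathscr{U}_X$ with $\delta_0[x_0]=\{x_0\}$. Let $\varepsilon\in\mathscr{U}_X$ be a sensitivity index and apply the definition of sensitivity to the point $x_0$ and the index $\delta_0$. The only admissible $y\in\delta_0[x_0]$ is $y=x_0$ itself, whence $(tx_0,ty)=(tx_0,tx_0)$ lies on the diagonal and so belongs to $\varepsilon$ for every $t\in T$. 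Thus no pair $(y,t)$ with $(tx_0,ty)\notin\varepsilon$ can exist, contradicting sensitivity; therefore $X$ has no isolated points.

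The argument is essentially a reduction, so I do not expect a serious analytic obstacle. The only place needing care is the bookkeeping that identifies Devaney chaos with a non-minimal M-semiflow --- in particular the passage from the deliberately weak notion of periodic point in $\S\ref{sec1.4}$ to almost periodicity. Once that identification is secured, the first half follows formally from Proposition~\ref{prop2.8} and the second from the diagonal-containment property of uniform entourages.
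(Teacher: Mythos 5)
Your proposal is correct and follows essentially the same route as the paper: the paper derives Proposition~\ref{prop2.10} exactly by noting that conditions (1)+(2) of Definition~\ref{def2.9} make $(T,X,\pi)$ an M-semiflow (periodic points are almost periodic since $S_x\subseteq N_T(x,U)$ and supersets of syndetic sets are syndetic), so that non-minimality and Proposition~\ref{prop2.5} (equivalently Proposition~\ref{prop2.8}(2)) give sensitivity, with the absence of isolated points then following from the definition of sensitivity just as in your diagonal argument. Your write-up merely makes explicit the details the paper leaves to its surrounding Notes.
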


It should be noted that although Proposition~\ref{prop2.10} is stronger than \cite[Theorem~4.7]{SKBS}, yet our approaches (needing only Lemmas~\ref{lem2.3} and \ref{lem2.4}) are completely different with and much more simpler than those introduced by Banks et al. in \cite{BB, CC, SKBS}.

\subsection{Devaney chaos implies Li-Yorke chaos}\label{sec2.3}
The following main criterion---Proposition~\ref{prop2.14}---implies that ``Devaney chaos $\Rightarrow$ Multi-dimensional chaos'' on Polish spaces. Here $\textit{Int}_XA$ will denote the interior of a subset $A$ relative to the topological space $X$.

In some literature like \cite{WLF}, a point $x$ is referred to as a recurrent point for a semiflow $(T,X,\pi)$ if $\exists$ an infinite net $\{t_n\}$ in $T$ with $t_nx\to x$. However, if the neutral element $e$ is a limit point of $T$ in Ellis' semigroup $E(X)$, then all points are recurrent and this definition becomes trivial even for discrete countable $T$.

For our convenience, we first need to introduce a notation to overcome this drawback, which is much more stronger than the same named notion available in the literature.

\begin{defn}\label{def2.11}
A point $x$ of $X$ is called a \textbf{\textit{recurrent point}} of a semiflow $(T,X,\pi)$ on a topological space $X$ with topological phase semigroup $T$, if for any neighborhood $U$ of $x$ and any compact proper subset $K$ of $T$, one can find some $t\in T\setminus K$ with $tx\in U$.
\end{defn}

Clearly this recurrence is very valid for non-compact phase semigroup $T$. It is easy to check that if $x$ is a recurrent point of a flow $(T,X,\pi)$, then each $y\in Tx$ is also a recurrent point of $(T,X,\pi)$.
It should be noted that a transitive point is not necessarily a recurrent point for a flow on a Polish space; see, e.g., Example~\ref{e1}.

The following simple observation will be useful in the proof of our multi-dimensional chaos criterion Proposition~\ref{prop2.14}.

\begin{lem}\label{lemII.12}
Let $(T,X,\pi)$ be a topological semiflow on a Hausdorff topological space with non-compact phase semigroup $T$. If $x_0\in\textit{Trans}(T,X)$ is such that $\textit{Int}_X(Tx_0)=\emptyset$, then $x_0$ is a recurrent point of $(T,X,\pi)$.
\end{lem}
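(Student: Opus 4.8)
The plan is to argue directly from Definition~\ref{def2.11} by contradiction. Fix a neighborhood $U$ of $x_0$ and a compact proper subset $K$ of $T$; since a neighborhood contains an open set about $x_0$, I may shrink $U$ and assume it is open (producing $t\in T\setminus K$ with $tx_0$ in the smaller open set suffices). The goal is then exactly $N_T(x_0,U)\not\subseteq K$, so I would suppose instead that $N_T(x_0,U)\subseteq K$ and aim to contradict the hypothesis $\textit{Int}_X(Tx_0)=\emptyset$.

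First I would translate the assumption into a statement about orbits: since every $t$ with $tx_0\in U$ now lies in $K$, one obtains $Tx_0\cap U\subseteq Kx_0$. The next step is to note that $Kx_0$ is \emph{closed}. Indeed, the orbit map $t\mapsto tx_0=\pi(t,x_0)$ is continuous, so $Kx_0$ is the continuous image of the compact set $K$ and hence compact; as $X$ is Hausdorff, $Kx_0$ is therefore closed.

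The heart of the argument is the elementary density--openness fact that a dense set $D$ and an open set $U$ satisfy $U\subseteq\overline{D\cap U}$ (any open neighborhood $W$ of a point of $U$ meets $D\cap U$, since $W\cap U$ is a nonempty open set met by $D$). Applying this with $D=Tx_0$, dense because $x_0\in\textit{Trans}(T,X)$, I would get
\begin{equation*}
U\subseteq\overline{Tx_0\cap U}\subseteq\overline{Kx_0}=Kx_0\subseteq Tx_0.
\end{equation*}
Thus $U$ would be a nonempty open subset of $Tx_0$, forcing $\textit{Int}_X(Tx_0)\supseteq U\not=\emptyset$, contrary to hypothesis. This contradiction yields $N_T(x_0,U)\not\subseteq K$, so the desired $t\in T\setminus K$ with $tx_0\in U$ exists and $x_0$ is recurrent.

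I do not expect a genuine obstacle, as the whole argument is point-set topology; the two points requiring care are the reduction to an open $U$ (so the density step applies) and the use of the Hausdorff hypothesis to make $Kx_0$ closed. The non-compactness of $T$ and the properness of $K$ are not really consumed by the argument beyond rendering the recurrence notion non-vacuous—what the proof in fact establishes is that $N_T(x_0,U)$ is contained in \emph{no} compact subset of $T$.
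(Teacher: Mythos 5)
Your proof is correct and is essentially the paper's own argument read in contrapositive form: both rest on the same two facts---that $Kx_0$ is compact (hence closed, since $X$ is Hausdorff) and that the orbit $Tx_0$ is dense---combined with the empty-interior hypothesis. The paper runs it directly ($U\setminus Kx_0$ is a nonempty open set, so density of $Tx_0$ yields some $t\in T\setminus K$ with $tx_0\in U$), whereas you assume $N_T(x_0,U)\subseteq K$ and derive $U\subseteq Kx_0\subseteq Tx_0$; the ingredients and content are identical.
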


\begin{proof}
Let $U$ be any open neighborhood of $x_0$ and let $K$ be any compact subset of $T$. Since $\textit{Int}_X(Tx_0)=\emptyset$, hence $U\setminus Kx_0$ is a nonempty open subset of $X$. Thus there is some $t\in T\setminus K$ with $tx_0\in U$. This proves Lemma~\ref{lemII.12}.
\end{proof}

Notice that the ``empty-interior'' condition is non-ignorable here. Let's see a simple counterexample as follows.
Let $T=\mathbb{R}$ and $X=\mathbb{R}$; then the $C^0$-flow $\pi\colon T\times X\rightarrow X$ given by $(t,x)\mapsto t+x$ is such that $\textit{Trans}(T,X)=X$ and $Tx=X$ for all $x\in X$. However, this flow has no recurrent points. In addition, see Example~\ref{e1}. On the other hand, it is easy to verify that:
\begin{itemize}
\item If $f\colon X\rightarrow X$ is topologically transitive on a Hausdorff space $X$ with no isolated points, then $\{f^n(x_0)\,|\,n=0,1,2,\dotsc\}$ has empty interior for any $x_0\in X$.
\end{itemize}

Let $W$ be any space and $n\ge2$. Recall that a nonempty compact set $F\subset W$ is said to be \textit{independent} in $R\subset W^n$, written $F\in J(R)$, if for every distinct points $w_1,\dotsc,w_n\in F$ the $n$-dimensional point $(w_1,\dotsc,w_n)$ never belongs to $R$ (see \cite{Ku}).

Finally we will need a classical topological lemma.

\begin{lem}[{Kuratowski-Mycielski theorem; cf.~\cite{Ku}}]\label{lem2.13}
Let $W$ be a complete metric space and let $R_1,R_2,R_3,\dotsc$ be a sequence of $F_\sigma$-sets $R_k\subset W^{n_k}$ of the first category. Then $\bigcap_kJ(R_k)$ is a dense $G_\delta$-set in the space $\mathscr{K}(W)$ of all nonempty compact subsets of $W$ (with the Hausdorff-distance topology).

If $W$ has no isolated points, then each $J(R_k)$ above may consist of Cantor sets (homeomorphisms of the Cantor discontinuum) of $W$ independent in $R_k$.
\end{lem}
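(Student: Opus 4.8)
The plan is to exploit that $\mathscr{K}(W)$, equipped with the Hausdorff metric $d_H$ induced by $d$, is itself a complete metric space, so the Baire category theorem is available; I will show that each $J(R_k)$ is a \emph{dense} $G_\delta$ subset of $\mathscr{K}(W)$ and then intersect over $k$. Since each $R_k$ is an $F_\sigma$ set of the first category, I first write it as a countable union $R_k=\bigcup_{m}C_{k,m}$ of closed sets; because $W^{n_k}$ is a Baire space, each closed subset of the meager set $R_k$ has empty interior, so every $C_{k,m}$ is in fact closed \emph{and} nowhere dense. This reduction to closed nowhere dense pieces is what makes both the $G_\delta$ and the density arguments go through.

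For the $G_\delta$ part, writing $n=n_k$, for $\varepsilon>0$ let $\Delta_n^{\varepsilon}=\{(w_1,\dots,w_n)\in W^{n}:d(w_i,w_j)\ge\varepsilon\ \text{for } i\neq j\}$, a closed subset of $W^n$. A compact $F$ lies in $J(R_k)$ exactly when no tuple of \emph{pairwise distinct} points of $F$ meets $R_k$, and since a tuple is off the diagonal iff it is $\varepsilon$-separated for some $\varepsilon=1/j$, one gets
\begin{equation*}
J(R_k)=\bigcap_{m,\,j}A^{(k)}_{m,j},\qquad A^{(k)}_{m,j}=\bigl\{F\in\mathscr{K}(W):F^{n}\cap\Delta_n^{1/j}\cap C_{k,m}=\emptyset\bigr\}.
\end{equation*}
Each set $K_{k,m,j}:=\Delta_n^{1/j}\cap C_{k,m}$ is closed in $W^n$, and $F\mapsto F^{n}$ is continuous from $\mathscr{K}(W)$ into $\mathscr{K}(W^n)$; since a compact set disjoint from a closed set is at positive distance from it, a small $d_H$-perturbation of $F$ keeps $F^{n}$ disjoint from $K_{k,m,j}$. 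Hence every $A^{(k)}_{m,j}$ is open, and $J(R_k)$ is $G_\delta$.

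The heart of the matter is density of each open set $A^{(k)}_{m,j}$. Given $F_0\in\mathscr{K}(W)$ and $\delta>0$, I choose a finite $\tfrac{\delta}{2}$-net $p_1,\dots,p_r$ of $F_0$ and look for nearby points $q_i\in B(p_i,\tfrac{\delta}{2})$ whose finite set $F=\{q_1,\dots,q_r\}$ is $\delta$-close to $F_0$ and lies in $A^{(k)}_{m,j}$. For each injective index tuple $\iota=(i_1,\dots,i_n)$ the coordinate-selection map $\phi_\iota\colon W^{r}\to W^{n}$ is a continuous open surjection, so $\phi_\iota^{-1}(C_{k,m})$ is closed and nowhere dense in $W^{r}$; taking the union over the finitely many $\iota$ and intersecting its complement with the nonempty open box $\prod_iB(p_i,\tfrac{\delta}{2})$ leaves a nonempty open set from which to pick $(q_1,\dots,q_r)$. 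For this $F$ no separated $n$-tuple of its points lies in $C_{k,m}$, so $F\in A^{(k)}_{m,j}$. By Baire, $\bigcap_{k,m,j}A^{(k)}_{m,j}=\bigcap_kJ(R_k)$ is a dense $G_\delta$, proving the first assertion. I expect the open-surjection/nowhere-dense step to be the main obstacle, since it is precisely where one must rule out the ``fat slices'' a nowhere dense subset of $W^n$ may have in a single coordinate; perturbing all coordinates at once through $\phi_\iota$ is what repairs this.

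For the refinement when $W$ has no isolated points I would upgrade the finite perturbation into a Cantor-scheme fusion. Using perfectness, I build nonempty open sets $B_s$ indexed by finite binary strings with $\overline{B_{s0}},\overline{B_{s1}}\subseteq B_s$ disjoint and $\mathrm{diam}\,B_s\to0$, so that $K=\bigcap_\ell\bigcup_{|s|=\ell}\overline{B_s}$ is a Cantor set; at each stage I apply the same open-surjection argument to the finitely many active strings to move and shrink the boxes so that the product of the closures misses the relevant $C_{k,m}$, diagonalizing over all triples $(k,m,\text{level})$. The resulting $K$ is then independent in every $R_k$, and since the whole scheme may be started inside any prescribed open set, such Cantor sets are dense in $\mathscr{K}(W)$. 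The delicate point here is organizing the fusion so that the finitely many independence constraints imposed at each level are all met while the splitting of each box into two disjoint sub-boxes is preserved — which is exactly where the no-isolated-points hypothesis is indispensable.
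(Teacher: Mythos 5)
The paper offers no proof of this lemma for you to be compared against: it is quoted as the classical Kuratowski--Mycielski theorem with a citation to \cite{Ku}, and is then used as a black box in the proofs of Propositions~\ref{prop2.14}, \ref{prop2.17}, \ref{prop3.9} and elsewhere. Your argument therefore has to stand on its own, and its first half does: it is a correct and complete proof, essentially the standard one. The reduction of the pieces $C_{k,m}$ to closed nowhere dense sets (closed plus meager in a Baire space), the identity $J(R_k)=\bigcap_{m,j}A^{(k)}_{m,j}$ obtained by stratifying the off-diagonal part of $F^{n}$ by the closed sets $\Delta_n^{1/j}$, the openness of each $A^{(k)}_{m,j}$ (a compact set disjoint from a closed set lies at positive distance, and $F\mapsto F^{n}$ is $1$-Lipschitz for the Hausdorff metrics), and the density step via a finite $\delta/2$-net together with the fact that $\phi_\iota^{-1}(C_{k,m})$ is closed and nowhere dense for every injective coordinate selection $\phi_\iota$ (continuous open maps pull closed nowhere dense sets back to closed nowhere dense sets) are all sound, and completeness of $\mathscr{K}(W)$ lets the Baire category theorem finish.

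The second half contains one genuine gap: the closing claim that density of the independent Cantor sets in $\mathscr{K}(W)$ follows ``since the whole scheme may be started inside any prescribed open set.'' A Cantor set built inside a single small ball is Hausdorff-close only to near-singletons; it cannot approximate a compactum of large diameter, so this justification fails. The repair is to root the scheme at a forest rather than a tree: given $F_0\in\mathscr{K}(W)$ and $\delta>0$, take a finite $\delta/2$-net $p_1,\dotsc,p_r$ of $F_0$, start from $r$ pairwise disjoint balls around the $p_i$, and run the fusion on all of them simultaneously; since a finite disjoint union of Cantor sets is again a Cantor set, the limit is a Cantor set within $\delta$ of $F_0$. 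A second point should be made explicit rather than left to the word ``diagonalizing'': each constraint $C_{k,m}$ must be imposed at \emph{every} level $\ell$ from its activation onwards, and for \emph{all} injective tuples of level-$\ell$ boxes (including tuples of boxes sharing a parent), because two distinct points of $K$ may still inhabit the same box at the stage where $C_{k,m}$ first becomes active and only separate later; it is precisely this re-imposition at all deep levels that forces every tuple of distinct points of $K$ to avoid $C_{k,m}$. With these two repairs, your construction proves the second assertion as well.
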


The following criterion of chaos is comparable with the known criterion for uniform chaos of $\mathbb{Z}_+$-actions \cite[Theorem~3.1 and Theorem~4.10]{AGHSY} using different approaches.

However, it should be noticed that since now the induced semiflow $(S,Y)$ does not need to have a fixed point in the following proof, hence we cannot expect to find a multi-dimensional Li-Yorke chaotic set for $(S,Y)$ instead of $(T,X,\pi)$.

\begin{prop}\label{prop2.14}
Let $(T,X,\pi)$ be a topological flow on a Polish space $X$ with $T$ an abelian topological group. Assume
\begin{itemize}
\item[$(\mathrm{a})$] $(T,X,\pi)$ is topologically transitive with $\textit{Int}_X(Tx)=\emptyset$ for some $x\in\textit{Trans}(T,X)$;
\item[$(\mathrm{b})$] there exists a periodic point $p$ in $X$.
\end{itemize}
Then, for any sequence of compact subsets of $T\colon F_1\subset F_2\subset F_3\subset\dotsm$, one can find an infinite subset $\Theta$ of $X$ such that for any $k\ge2$ and any distinct points $x_1,\dotsc, x_k\in\Theta$, there are two sequences $t_n\in T$ and $s_n\in T\setminus F_n$ with
\begin{gather*}
t_n(x_1,\dotsc,x_k)\to(p,\dotsc,p)\in\varDelta_{X^k}\quad \textrm{and}\quad s_n(x_1,\dotsc,x_k)\to(x_1,\dotsc,x_k)\in X^k
\end{gather*}
as $n\to\infty$. That is, $(T,X,\pi)$ is of multi-dimensional chaos.
\end{prop}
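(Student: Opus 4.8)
The plan is to locate the whole set $\Theta$ inside a single group-orbit, so that the commutativity of $T$ forces one and the same pair of time-sequences to serve every coordinate of a tuple simultaneously; the periodicity of $p$ and the empty-interior hypothesis are precisely what make this orbit reach $p$ and be infinite. First I would fix, using $(\mathrm{a})$, a transitive point $x_0$ with $\textit{Int}_X(Tx_0)=\emptyset$. Since $p\in X=\overline{Tx_0}$ while $Tx_0$ has empty interior, $T$ must be non-compact (otherwise $Tx_0$ would be a compact, hence closed, orbit equal to $X$); thus Lemma~\ref{lemII.12} shows that $x_0$ is a recurrent point in the sense of Definition~\ref{def2.11}. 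Using $(\mathrm{b})$ I would then introduce the stabilizer $S=S_p=\{t\in T\,|\,tp=p\}$, which, as $T$ is an abelian group and $p$ is periodic, is a closed syndetic subgroup of $T$.

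The first substantive step is to verify that the $S$-orbit $Sx_0$ is infinite. If $Sx_0$ were finite, syndeticity of $S$ gives a compact $K$ with $T=K^{-1}S$, so that $Tx_0=\bigcup_{y\in Sx_0}K^{-1}y$ is a finite union of the compact sets $K^{-1}y$ (continuous images of $K$), hence compact and closed; transitivity would then force $Tx_0=\overline{Tx_0}=X$ and so $\textit{Int}_X(Tx_0)=X\neq\emptyset$, contradicting $(\mathrm{a})$. Consequently I may take $\Theta$ to be any countably infinite set of distinct points of $Sx_0$, writing each $x_i\in\Theta$ as $x_i=g_ix_0$ with $g_i\in S$.

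It then remains to produce the two sequences for an arbitrary tuple of distinct $x_1,\dots,x_k\in\Theta$. For proximality, since $x_0$ is transitive and $X$ is first countable I choose $t_n\in T$ with $t_nx_0\to p$; commutativity together with $g_i p=p$ gives $t_nx_i=g_i(t_nx_0)\to g_i p=p$, so $t_n(x_1,\dots,x_k)\to(p,\dots,p)$. For the separation/recurrence part, recurrence of $x_0$ lets me pick, for a neighbourhood base $U_n\downarrow\{x_0\}$ and the prescribed compact sets $F_n$, elements $s_n\in T\setminus F_n$ with $s_nx_0\in U_n$; then $s_nx_0\to x_0$ and likewise $s_nx_i=g_i(s_nx_0)\to g_ix_0=x_i$, so $s_n(x_1,\dots,x_k)\to(x_1,\dots,x_k)$ with $s_n\notin F_n$. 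As the reference sequence $\{F_n\}$ was arbitrary, this exhibits multi-dimensional chaos; in fact the sequences $t_n,s_n$ are universal, working for every tuple from $\Theta$ at once.

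The hard part is conceptual rather than computational: Definition~\ref{def1.5} demands a single $t_n$ (and a single $s_n$) valid for all coordinates of a tuple simultaneously, and this is exactly what fails for a generic $\Theta$. The device that overcomes it is to keep $\Theta$ inside the one coset-orbit $Sx_0$, because then $t(g_ix_0)=g_i(tx_0)$ transports the recurrence and the proximality of the base point $x_0$ to every other point of $\Theta$, while membership $g_i\in S$ guarantees the proximal limit remains $p$. This is also why one must argue inside $(T,X,\pi)$ rather than inside the induced subsystem $(S,\overline{Sx_0})$: the point $p$ need not belong to $\overline{Sx_0}$, so that induced system may possess no fixed point, and the proximal sequence $t_n$ has to be taken in the full group $T$.
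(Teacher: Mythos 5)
Your proof is correct, but it reaches the conclusion by a genuinely more elementary route than the paper. Both arguments run on the same engine: the stabilizer $S=S_p$ is a closed syndetic (hence non-compact) subgroup, $Sx_0$ must be infinite (your syndeticity argument $T=K^{-1}S$ forcing $Tx_0$ compact and closed is exactly the paper's parenthetical remark), Lemma~\ref{lemII.12} supplies recurrence of $x_0$ with the $F_n$-constraint, and commutativity plus $g_ip=p$ transports proximality and recurrence from $x_0$ to every point $g_ix_0$ of the orbit. The difference is where $\Theta$ comes from. The paper works in $Y=\overline{Sx_0}$, shows that the $k$-ary proximal relation $P_k(T,Y)$ and the recurrence relation $R_k(T,Y)$ are dense $G_\delta$-subsets of $Y^k$ (using the same commutativity trick to prove density), and then invokes the Kuratowski--Mycielski theorem (Lemma~\ref{lem2.13}) to extract an infinite independent set $\Theta\subset Y$. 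You instead place $\Theta$ directly inside the single orbit $Sx_0$, which makes the category machinery unnecessary and yields the stronger feature that one pair of sequences $t_n,s_n$ is \emph{universal}, serving every tuple from $\Theta$ simultaneously. What the paper's heavier approach buys is reusability: because it produces $\Theta$ as an independent set via Lemma~\ref{lem2.13}, the same scheme upgrades $\Theta$ to a dense Cantor (hence uncountable) set whenever $Y$ can be shown to have no isolated points, which is precisely how Proposition~\ref{prop2.17}, Corollary~\ref{cor2.20} and Proposition~\ref{prop2.21} obtain multi-dimensional \emph{Li-Yorke} chaos; your orbit-bound $\Theta$ is confined to the (possibly countable) set $Sx_0$ and cannot be upgraded this way in general.
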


\begin{proof}
Let $k\ge2$ be any given integer.
We first note that $\textit{Trans}(T,X)$ is a dense $G_\delta$-set in $X$, for $X$ is a Polish space. Let
\begin{gather*}
S=S_p=\{t\in T\,|\,tp=p\}
\end{gather*}
be the stabilizer of the periodic point $p$. Clearly, $S$ is a closed syndetic subgroup of $T$ by definition and so it is not compact; for otherwise, $T$ is compact and so $Tx=X$ for each $x\in\textit{Trans}(T,X)$ contradicting condition (a).

Let $x_0\in\textit{Trans}(T,X)$ be any given with $\textit{Int}_X(Tx_0)=\emptyset$ by condition (a); and now for simplicity, we write $Y=\textrm{cls}_X(Sx_0)$. We note that it is easy to check that
\begin{itemize}
\item $Y$ is an infinite Polish space with the topology inherited from $X$.\footnote{There does not need to exist a fixed point in $Y$ for $(S,Y)$, since $T$ need not be discrete. This is an interesting point different with \cite{Mai04, AGHSY}.} \hfill (\ref{prop2.14}.1)

    (If $Sx_0$ is finite, then $Tx_0$ is compact and $Tx_0=\overline{Tx_0}=X$ contradicting condition (a).)
\end{itemize}
Let $Y^k=Y\times \dotsm\times Y$ ($k$-fold product). We define a $k$-ary proximal relation:
\begin{gather*}
P_k(T,Y)=\left\{(x_1,\dotsc,x_k)\in Y^k\,|\,\exists t_n\in T\textrm{ s.t. }t_n(x_1,\dotsc,x_k)\to(p,\dotsc,p)\textrm{ as }n\to\infty\right\}.
\end{gather*}
(Note here that $t_n\in T$ not necessarily requiring $t_n\in S_p$.) Since
\begin{gather*}
P_k(T,Y)=\bigcap_{n=1}^\infty\left(Y^k\cap T^{-1}B_{1/n}(p,\dotsc,p)\right),
\end{gather*}
hence $P_k(T,Y)$ is a $G_\delta$-set in $Y^k=Y\times \dotsm\times Y$. Next, let $V_1, \dotsc, V_k$ be any nonempty open subsets of $Y$. Then there are elements $s_1,\dotsc,s_k\in S$ with $s_1x_0\in V_1, \dotsc, s_kx_0\in V_k$; this is because $Y=\overline{Sx_0}$. On the other hand, by $\overline{Tx_0}=X$, it follows that there is a sequence $\{t_n\}$ in $T$ with $t_nx_0\to p$. Therefore,
\begin{gather*}
t_n(s_1x_0,\dotsc,s_kx_0)=(s_1t_nx_0,\dotsc,s_kt_nx_0)\to(s_1p,\dotsc,s_kp)=(p,\dotsc,p)
\end{gather*}
as $n\to\infty$. This shows that $P_k(T,Y)$ is dense in $Y^k$. Thus, we have concluded that
\begin{itemize}
\item $P_k(T,Y)$ is a dense $G_\delta$-set in $Y^k$.\hfill (\ref{prop2.14}.2)
\end{itemize}
In other words, the $k$D-proximal relation $P_k(T,Y)$ is a dense $G_\delta$-set restricted to $Y^k$.

Since $F_n$ is compact and $T$ is not compact (otherwise $S$ is compact), hence $T\setminus F_n$ is not empty for all $n=1,2,\dotsc$. Let $d$ be the metric on $X$ and set
\begin{gather*}
U_n=\left\{(x_1,\dotsc,x_k)\in Y^k\,|\,\exists t\in T\setminus F_n\textrm{ s.t. }d(tx_1,x_1)+\dotsm+d(tx_k,x_k)<1/n\right\}.
\end{gather*}
(Note that we only require $t\in T\setminus F_n$ not $t\in S\setminus F_n$.) Then $U_n$ is an open of $Y^k$ and so
\begin{gather*}
R_k(T,Y):=\bigcap_{n=1}^\infty U_n
\end{gather*}
is a $G_\delta$-set in $Y^k$. Moreover, since $\textit{Int}_X(Tx_0)=\emptyset$ and $x_0$ is a transitive point of $(T,X,\pi)$ from $(\mathrm{a})$, then by Lemma~\ref{lemII.12} it follows that \begin{itemize}
\item There exists a sequence $s_n\in T\setminus F_n, n=1,2,\dotsc$ such that $s_nx_0\to x_0$ as $n\to\infty$. \hfill (\ref{prop2.14}.3)
\end{itemize}
Next we will prove the following assertion:
\begin{itemize}
\item $R_k(T,Y)$ is a dense $G_\delta$-set in $Y^k$.\hfill (\ref{prop2.14}.4)
\end{itemize}
Indeed, for any nonempty open sets $U_1,\dotsc, U_k$ in $Y$, we can choose elements $\tau_1,\dotsc,\tau_k\in S$ with $\tau_1x_0\in U_1, \dotsc, \tau_kx_0\in U_k$. Thus by (\ref{prop2.14}.3), it follows that
\begin{gather*}
s_n(\tau_1x_0,\dotsc,\tau_kx_0)=(\tau_1s_nx_0,\dotsc,\tau_ks_nx_0)\to(\tau_1x_0,\dotsc,\tau_kx_0)
\end{gather*}
as $n\to\infty$. Thus (\ref{prop2.14}.4) holds.

We will need to use Lemma~\ref{lem2.13} with $W=Y$ and $R_k=Y^k\setminus\left(P_k(T,Y)\cap R_k(T,Y)\right)$ for all $k=1,2,\dotsc$. Clearly, $Y$ is an infinite complete metric space by (\ref{prop2.14}.1). By (\ref{prop2.14}.2) and (\ref{prop2.14}.4), it follows that each $R_k\subset Y^k$ is an $F_\sigma$-set of the first category. Then by Lemma~\ref{lem2.13}, it follows that there exists an infinite subset, say $\Theta$, of $Y$, such that for any distinct points $x_1,\dotsc,x_k\in\Theta$, the $k$D-point $(x_1,\dotsc,x_k)\in P_k(T,Y)\cap R_k(T,Y)$.

The proof of Proposition~\ref{prop2.14} is thus completed.
\end{proof}

\begin{cor}
Let $(T,X,\pi)$ be a topological flow on a Polish space $X$ with $T$ an abelian topological group. Assume
$(T,X,\pi)$ is a non-minimal M-flow and there exists a periodic point $p$ in $X$.
Then $(T,X,\pi)$ is of multi-dimensional chaos.
\end{cor}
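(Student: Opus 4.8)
The plan is to read the corollary as a direct application of Proposition~\ref{prop2.14}. Under the stated hypotheses $(T,X,\pi)$ is already a topological flow on a Polish space with $T$ an abelian topological group, so it suffices to verify conditions $(\mathrm{a})$ and $(\mathrm{b})$ of that proposition. Condition $(\mathrm{b})$ is immediate, since the existence of a periodic point $p$ is assumed. As for $(\mathrm{a})$, topological transitivity is part of the M-flow hypothesis, so the only genuine content is to produce a transitive point $x_0$ whose orbit has empty interior, $\textit{Int}_X(Tx_0)=\emptyset$.

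First I would fix a candidate. Since $X$ is Polish and the flow is topologically transitive, Basic Fact~\ref{fac1} gives that $\textit{Trans}(T,X)$ is a dense $G_\delta$, hence nonempty, and I pick any $x_0\in\textit{Trans}(T,X)$. The claim is $\textit{Int}_X(Tx_0)=\emptyset$, which I would establish by contradiction. Suppose $\textit{Int}_X(Tx_0)\neq\emptyset$ and choose $w_0=r_0x_0$ in this interior. Because $T$ is a group, the orbit $Tx_0$ is $T$-invariant, so $r_0^{-1}\textit{Int}_X(Tx_0)$ is an open neighbourhood of $x_0$ contained in $Tx_0$; applying the homeomorphism $\pi_t$ to it produces, for every $t$, an open set containing $tx_0$ and contained in $Tx_0$. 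Hence $Tx_0$ is open (and, being the orbit of a transitive point, dense).

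Now the Bron\v{s}te\v{\i}n condition enters: the almost periodic points are dense, so the nonempty open set $Tx_0$ contains an almost periodic point $z$. Writing $z=tx_0$ and using $Tt=T$ for the group $T$, we get $Tz=Tx_0$, whence $\overline{Tz}=\overline{Tx_0}=X$; that is, $z$ is simultaneously almost periodic and transitive. The key lemma I would invoke (or prove in two lines) is that the orbit closure of an almost periodic point is minimal, and this holds even on a non-compact uniform space. Indeed, if some $y\in\overline{Tz}$ had $z\notin\overline{Ty}$, pick a neighbourhood $U$ of $z$ with $Ty\cap U=\emptyset$ and a smaller $U'$ with $\overline{U'}\subseteq U$; syndeticity of $N_T(z,U')$ furnishes a compact $K\subseteq T$ so that, along a net $s_\alpha z\to y$, one finds $k_\alpha\in K$ with $k_\alpha s_\alpha z\in U'$, and a convergent subnet $k_\alpha\to k^\ast\in K$ then yields, by joint continuity, $k^\ast y\in\overline{U'}\cap Ty$, a contradiction. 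Applying this to $z$ shows $X=\overline{Tz}$ is minimal, contradicting non-minimality.

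Therefore $\textit{Int}_X(Tx_0)=\emptyset$, condition $(\mathrm{a})$ of Proposition~\ref{prop2.14} is satisfied, and multi-dimensional chaos follows at once. The only non-routine ingredient is the minimality lemma for almost periodic points; I expect the subnet extraction $k_\alpha\to k^\ast$ to be the delicate point, and I would stress that it rests not on compactness of $X$ but on the compactness of the syndetic witness $K\subseteq T$ together with the joint continuity of the action. Everything else---invariance and openness of the orbit, density of the almost periodic set, and the identity $Tz=Tx_0$---is formal.
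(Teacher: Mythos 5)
Your proposal is correct and takes essentially the same route as the paper: the paper's proof is a single sentence reducing the corollary to Proposition~\ref{prop2.14} plus the unproved assertion that every non-minimal M-flow satisfies condition $(\mathrm{a})$. Your verification of that assertion---openness of a transitive orbit with nonempty interior via the group action, an almost periodic transitive point via the Bron\v{s}te\v{\i}n condition, and minimality of the orbit closure of an almost periodic point via the compact syndetic witness $K$ and joint continuity---is precisely the justification the paper leaves implicit, and each step is sound.
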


\begin{proof}
This follows from Proposition~\ref{prop2.14} and that every non-minimal M-flow satisfies condition (a) of Proposition~\ref{prop2.14}.
\end{proof}

\begin{cor}\label{cor2.16}
Let $(T,X,\pi)$ be a topological flow on a Polish space $X$ with $T$ an abelain topological group. If it is Devaney chaos (in the sense of Def.~\ref{def2.9}), then it is of multi-dimensional chaos.
\end{cor}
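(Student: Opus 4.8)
The plan is to reduce Corollary~\ref{cor2.16} directly to Proposition~\ref{prop2.14}, since the latter is a multi-dimensional chaos criterion stated in terms of two explicit hypotheses~(a) and~(b). So the entire task is to verify that a Devaney chaotic flow (in the sense of Definition~\ref{def2.9}) on a Polish space with abelian topological phase group satisfies those two hypotheses. First I would unpack what Devaney chaos gives us: topological transitivity, a dense set of periodic points, and non-minimality. Hypothesis~(b) of Proposition~\ref{prop2.14}---the existence of a periodic point $p$---is immediate, since density of periodic points in a non-singleton space in particular guarantees at least one such point.

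The substantive verification is hypothesis~(a): that $(T,X,\pi)$ is topologically transitive (already given) \emph{and} that $\textit{Int}_X(Tx)=\emptyset$ for some $x\in\textit{Trans}(T,X)$. Since $X$ is Polish, Basic Fact~\ref{fac1} tells us that $\textit{Trans}(T,X)$ is a dense $G_\delta$-set, so transitive points exist in abundance; I just need to produce one whose orbit has empty interior. The natural route is to argue by contradiction: suppose every transitive point had an orbit with nonempty interior, equivalently $\textit{Int}_X(Tx_0)\ne\emptyset$ for some transitive $x_0$. Here I would invoke the fact that by Notes after Definition~\ref{def2.9}, conditions (1)+(2) make $(T,X,\pi)$ an M-semiflow, and by Proposition~\ref{prop2.10} Devaney chaos forces $X$ to have \emph{no isolated points}. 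Combining no isolated points with transitivity should preclude a transitive orbit from carrying interior, because if $Tx_0$ had interior then, invariance of the orbit together with transitivity would propagate that interior across a dense invariant open set, and in a perfect (no isolated points) Polish space this clashes with non-minimality: a transitive point whose orbit is somewhere dense-with-interior would tend to force the orbit to be all of $X$ or force $X$ minimal.

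The step I expect to be the main obstacle is exactly pinning down \emph{why} some transitive point must have empty-interior orbit, cleanly and without circularity. The cleanest argument I would attempt runs as follows. By non-minimality there is a point whose orbit closure is a proper closed invariant subset; pick a nonempty open $U$ disjoint from that subset's interior-complement obstruction, and use transitivity to show $Tx_0$ cannot contain an open set, since an open piece of an orbit, translated by the group action, would spread (using that $T$ is a group, so $\pi_t$ is a homeomorphism and $Tx_0$ is genuinely $T$-invariant) to make $\overline{Tx_0}$ have nonempty interior everywhere---but $\overline{Tx_0}=X$ for a transitive point, and then one shows this collides with non-minimality via the dense periodic points, each of which generates a proper minimal orbit closure. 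An alternative and perhaps safer route: among the dense $G_\delta$ of transitive points, if \emph{all} had orbits with nonempty interior, each such orbit, being a countable-or-not union of open sets via group translates, would be open; an open dense invariant orbit in a Polish space without isolated points, by a Baire-category comparison of two distinct transitive orbits (which must be disjoint yet both dense open), yields a contradiction since two disjoint dense open sets cannot coexist. I would lean on this last observation, as it is the most robust. Once hypothesis~(a) is secured, Proposition~\ref{prop2.14} applies verbatim and delivers the multi-dimensional chaos conclusion, completing the proof.
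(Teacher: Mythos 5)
Your reduction to Proposition~\ref{prop2.14} is the right frame (it is also the paper's), and hypothesis (b) is indeed immediate; but your verification of hypothesis (a) has a genuine gap: both of your proposed routes try to derive ``some transitive orbit has empty interior'' from transitivity, non-minimality and absence of isolated points alone, without using the density of periodic points, and that implication is false. The paper's own Example~\ref{e1} ($T=\mathbb{R}$ acting on $X=\mathbb{R}\cup\{\infty\}$ by translation) is non-minimal, topologically transitive, and $X$ has no isolated points, yet every transitive point $x\in\mathbb{R}$ has orbit $Tx=\mathbb{R}$, which is open and dense in $X$. This kills your first route outright (``no isolated points plus transitivity should preclude a transitive orbit from carrying interior'' is simply not true), and it also kills the Baire-category route you say you would lean on: that route needs two \emph{distinct} transitive orbits (disjoint dense open sets), but nothing guarantees their existence --- in Example~\ref{e1} all transitive points lie on a single orbit, so the claimed contradiction never materializes.

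The missing ingredient is precisely Devaney condition (2), and it is what the paper's short proof uses: if $\textit{Int}_X(Tx_0)\neq\emptyset$ for some $x_0\in\textit{Trans}(T,X)$, then by density of periodic points there is a periodic point $q\in\textit{Int}_X(Tx_0)\subseteq Tx_0$; since $T$ is a group, $x_0\in Tq$, and since $T$ is abelian the stabilizers of $x_0$ and $q$ coincide, so $x_0$ is itself periodic; but then $X=\overline{Tx_0}$ is minimal (the orbit closure of a periodic point is minimal, as noted in \S\ref{sec2.2}), contradicting Devaney condition (3). Hence every transitive point has empty-interior orbit, hypothesis (a) holds, and Proposition~\ref{prop2.14} applies. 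With that fix, the rest of your argument (Basic Fact~\ref{fac1} plus the application of Proposition~\ref{prop2.14}) goes through.
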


\begin{proof}
If there were some point $x_0\in\textit{Trans}(T,X)$ with $\textit{Int}_X(Tx_0)\not=\emptyset$, then by Def.~\ref{def2.9}-(2) it follows that $x_0$ is a periodic point of $(T,X,\pi)$ and further $(T,X,\pi)$ is minimal. This is a contradiction to Def.~\ref{def2.9}-(3). Thus Corollary~\ref{cor2.16} follows from Proposition~\ref{prop2.14}.
\end{proof}

Therefore, Devaney chaos implies the multi-dimensional chaos. Another important special case of Proposition~\ref{prop2.14} is the following.

\begin{prop}\label{prop2.17}
Let $\pi\colon \mathbb{R}_+\times X\rightarrow X$ be a classical $C^0$-semiflow on a Polish space $X$ such that
\begin{itemize}
\item[$(\mathrm{a})$] $\mathbb{R}_+\curvearrowright_\pi X$ is topologically transitive with $\textit{Int}_X(Tx_0)=\emptyset$ for some $x_0\in\textit{Trans}(X,\mathbb{R}_+)$,
\item[$(\mathrm{b})$] there exists a periodic point $p$ in $X$.
\end{itemize}
Then $\mathbb{R}_+\curvearrowright_\pi X$ is multi-dimensional Li-Yorke chaotic. Particularly, if $\mathbb{R}_+\curvearrowright_\pi X$ is Devaney chaotic, then it is multi-dimensional Li-Yorke chaotic.
\end{prop}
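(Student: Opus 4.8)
The plan is to run the proof of Proposition~\ref{prop2.14} essentially verbatim with $T=(\mathbb{R}_+,+)$, and then to upgrade the \emph{infinite} set $\Theta$ produced there to an \emph{uncountable} one. Although $\mathbb{R}_+$ is only a semigroup, it is abelian, and the group structure is never really used in Proposition~\ref{prop2.14}: the commutation identities $t_n(s_ix_0)=s_i(t_nx_0)$ and $s_n(\tau_ix_0)=\tau_i(s_nx_0)$ need only commutativity, while the sole appeal to ``group'' (the non-compactness of the stabilizer) survives because a syndetic subset of the non-compact $\mathbb{R}_+$ is unbounded, hence non-compact. So, setting $S=S_p=\{t\in\mathbb{R}_+\,|\,tp=p\}$ and $Y=\textrm{cls}_X(Sx_0)$, the same computations show that the proximal relation $P_k(T,Y)$ and the return relation $R_k(T,Y)$ (the latter furnished by Lemma~\ref{lemII.12} exactly as in $(\ref{prop2.14}.3)$) are dense $G_\delta$-subsets of $Y^k$ for every $k\ge2$. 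It is worth recording that, by the semiflow identity, $S_p$ is a closed syndetic subsemigroup of $\mathbb{R}_+$, hence either all of $\mathbb{R}_+$ (when $p$ is a fixed point) or the cyclic subsemigroup $\omega\mathbb{Z}_+$ generated by the minimal period $\omega>0$ of $p$.

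The one genuinely new point is the passage from an infinite $\Theta$ to an uncountable one. In Proposition~\ref{prop2.14} the first clause of Lemma~\ref{lem2.13} only yields compact independent sets, among which one finds an infinite one because $Y$ is infinite. To force an \emph{uncountable} $\Theta$ I would instead invoke the second clause of Lemma~\ref{lem2.13}, which produces Cantor sets but requires the ambient complete metric space to have no isolated points. Accordingly I would apply it not to $Y$ but to the perfect kernel $Y'$ of $Y$ (its largest perfect subset, via Cantor--Bendixson). Since $Y\setminus Y'$ is countable while $Sx_0$ is dense in $Y$, the orbit $Sx_0$ remains dense in $Y'$, so the density arguments for $P_k$ and $R_k$ go through unchanged on $(Y')^k$; thus $P_k(T,Y')\cap R_k(T,Y')$ is dense $G_\delta$ in $(Y')^k$, and Lemma~\ref{lem2.13} delivers a Cantor set $\Theta\subseteq Y'$ lying, for all $k$, in $P_k\cap R_k$. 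Any distinct $x_1,\dots,x_k\in\Theta$ then admit $t_n\in T$ with $t_n(x_1,\dots,x_k)\to(p,\dots,p)$ and $s_n\in T\setminus F_n$ with $s_n(x_1,\dots,x_k)\to(x_1,\dots,x_k)$, which is exactly multi-dimensional Li-Yorke chaos.

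Everything therefore rests on $Y'\neq\emptyset$, equivalently on $Y=\textrm{cls}_X(Sx_0)$ being uncountable (non-scattered); this is the step I expect to be the main obstacle, and its input is hypothesis $(\mathrm{a})$. First, $(\mathrm{a})$ already forces $X$ to have no isolated points: an isolated $z$ would be open, hence (by density of $Tx_0$) lie in $Tx_0$, so $\{z\}\subseteq\textit{Int}_X(Tx_0)$, contradicting $\textit{Int}_X(Tx_0)=\emptyset$. In the fixed-point case $S_p=\mathbb{R}_+$ one has $Y=\overline{Tx_0}=X$, which is then perfect and the claim is immediate. In the genuinely periodic case $S_p=\omega\mathbb{Z}_+$, write $g=\pi_\omega$, so that $Y=\textrm{cls}_X\{g^nx_0\}$ and $X=\overline{\mathbb{R}_+x_0}=\overline{\bigcup_{y\in Sx_0}\pi([0,\omega]\times\{y\})}$. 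Were $Y$ countable, $X$ would be covered by countably many forward arcs $\pi([0,\omega]\times\{y\})$, and the Baire category theorem in the perfect Polish space $X$ would make one such arc somewhere dense; the task is then to convert this into nonempty interior of $\mathbb{R}_+x_0$ (using joint continuity and $g^{n_i}x_0\to y$), contradicting $(\mathrm{a})$. Equivalently, I would promote the $\mathbb{R}_+$-recurrence of $x_0$ supplied by Lemma~\ref{lemII.12} to recurrence under the single map $g$, i.e. $x_0\in\omega_g(x_0)$: once this holds, $Y=\omega_g(x_0)$ and every point of $Y$ is a non-isolated limit of the orbit, so $Y$ is perfect. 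Controlling the return times modulo $\omega$ in this promotion---where writing a recurrence time as $t_j=n_j\omega+s_j$ and extracting $g^{n_j}x_0=\pi_{\omega-s_j}(t_jx_0)\to\pi_\sigma(x_0)\in Y$ is the natural device---is the delicate technical heart of the argument, especially when $X$ is not compact.

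Finally, the \emph{Devaney} special case follows at once: condition $(\mathrm{b})$ is immediate from dense periodic points, $(\mathrm{a})$ holds exactly as in Corollary~\ref{cor2.16} (a transitive point whose orbit had interior would be periodic and force minimality, contradicting non-minimality), and $X$ has no isolated points by Proposition~\ref{prop2.10}; hence Devaney chaos $\Rightarrow$ multi-dimensional Li-Yorke chaos.
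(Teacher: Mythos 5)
Your strategy is in fact the same as the paper's: reduce to the machinery of Proposition~\ref{prop2.14} with $S_p=\tau\mathbb{Z}_+$, and obtain uncountability of $\Theta$ from the Cantor-set clause of Lemma~\ref{lem2.13}, which requires the space $Y=\mathrm{cls}_X(S_px_0)$ to have no isolated points. But exactly that requirement is where your proposal has a genuine gap: you explicitly leave unproven the promotion of the $\mathbb{R}_+$-recurrence of $x_0$ (Lemma~\ref{lemII.12}) to recurrence of $x_0$ under the single map $g=\pi_\tau$, calling it ``the delicate technical heart'' and ``the main obstacle''. That promotion is not a deferrable verification --- it is the entire content of the step, and your own sketch of it only yields $\pi_\sigma(x_0)\in\omega_g(x_0)$ for some $\sigma\in[0,\tau]$ (note also the index: $\pi_{\omega-s_j}(t_jx_0)=g^{n_j+1}x_0$), which is strictly weaker than $x_0\in\omega_g(x_0)$; closing that gap is precisely where compactness enters the classical argument. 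The paper settles this step in one stroke by citing \cite[Theorem~1.4]{Fur}: a recurrent point of the semiflow is recurrent for $(\tau\mathbb{Z}_+,X)$; then every point of $\pi(\tau\mathbb{Z}_+,x_0)$ is a recurrent non-periodic point of $(\tau\mathbb{Z}_+,Y)$, so $Y$ is perfect and the Proposition~\ref{prop2.14} argument applies verbatim with Cantor sets.

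Moreover, the two fallback arguments you offer in place of this step are both flawed. First, the perfect-kernel route: from ``$Y\setminus Y'$ is countable and $Sx_0$ is dense in $Y$'' you infer ``$Sx_0$ remains dense in $Y'$''. This inference is false in general: a countable dense subset of $Y$ may consist entirely of points isolated in $Y$ and so miss $Y'$ altogether (think of an orbit all of whose points are isolated in its closure, accumulating only on a Cantor set); since the density of $P_k$ and $R_k$ in the product is proved by placing orbit points $\tau_ix_0$ inside arbitrary nonempty open subsets of the underlying space, the argument collapses on $(Y')^k$. Second, the Baire-category route: $X=\overline{\mathbb{R}_+x_0}$ is the \emph{closure} of the countable union of arcs $\pi([0,\omega]\times\{y\})$, $y\in Sx_0$, not that union itself, so ``$X$ is covered by countably many forward arcs'' does not follow from countability of $Y$ unless $Y$ is compact --- which you cannot assume on a Polish space. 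In short, you located the crux correctly but did not resolve it; the proof is completed only by proving or citing the flow-to-discrete recurrence promotion, as the paper does with \cite[Theorem~1.4]{Fur}.
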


\begin{proof}
Let $x_0\in\textit{Trans}(X,\mathbb{R}_+)$ be such that $\textit{Int}_X(\mathbb{R}_+x_0)=\emptyset$.
Thus $x_0$ is a recurrent non-periodic point of $(\mathbb{R}_+,X)$ by Lemma~\ref{lemII.12}. Next without loss of generality, let the stabilizer of $p$ is $S_p=\tau\mathbb{Z}_+$ for some $\tau>0$ and then \cite[Theorem~1.4]{Fur} it follows that $x_0$ is also a recurrent non-periodic point of $(\tau\mathbb{Z}_+,X)$. Let $Y=\overline{\pi(\tau\mathbb{Z}_+,x_0)}$. Since each point $y_0\in\pi(\tau\mathbb{Z}_+,x_0)$ is also a recurrent non-periodic point for $(\tau\mathbb{Z}_+,Y)$, thus $Y$ has no isolated points. Then the same argument of Proposition~\ref{prop2.14} concludes the proof of Proposition~\ref{prop2.17}.
\end{proof}

We note that in the proof of Proposition~\ref{prop2.17}, although $\tau\mathbb{Z}_+\curvearrowright_\pi Y$ is topologically transitive, yet we cannot guarantee `$Y$ contains a fixed point' here; this is because as $t_nx_0\to p$ where we write $t_n=\tau i_n+(\tau-r_n)\to\infty$ with $i_n\in\mathbb{Z}_+, 0<r_n\le \tau$, there is no the equality
\begin{gather*}
\lim_{n\to\infty}\pi(r_n+t_n,x_0)=\lim_{k\to\infty}\pi_{r_k}\left(\lim_{n\to\infty}\pi_{t_n}x_0\right)
\end{gather*}
for Ellis' semigroup $E(T,X)$ is never a topological semigroup in general. In view of this reason, the criterion for chaos introduced for $\mathbb{Z}_+$-acting cascade \cite[Theorem~3.1]{AGHSY} does not work here.

However, there is no such an obstruction for $\mathbb{Z}_+$-actions. The discrete-time version of Proposition~\ref{prop2.17} is also of independent interests and importance.
\begin{itemize}
\item \textit{Let $\pi\colon \mathbb{Z}_+\times X\rightarrow X$ be a semiflow on a Polish space $X$ such that
\begin{itemize}
\item[$(\mathrm{a})$] $\mathbb{Z}_+\curvearrowright_\pi X$ is topologically transitive with $\textit{Int}_X(\mathbb{Z}_+x_0)=\emptyset$ for some $x_0\in\textit{Trans}(X,\mathbb{Z}_+)$,
\item[$(\mathrm{b})$] there exists a periodic point $p$ in $X$.
\end{itemize}
Then $\mathbb{Z}_+\curvearrowright_\pi X$ is multi-dimensional Li-Yorke chaotic.}

\item \textit{Let $\pi\colon \mathbb{Z}_+\times X\rightarrow X$ be a semiflow on a Polish space $X$ with no isolated points such that
\begin{itemize}
\item[$(\mathrm{a})$] $\mathbb{Z}_+\curvearrowright_\pi X$ is topologically transitive,
\item[$(\mathrm{b})$] there exists a periodic point $p$ in $X$.
\end{itemize}
Then $\mathbb{Z}_+\curvearrowright_\pi X$ is multi-dimensional Li-Yorke chaotic.}

(It should be noted that this is stronger than Huang-Ye~\cite[Theorem~4.1]{HY} which only captures the classical Li-Yorke chaos but not our multi-dimensional Li-Yorke chaos. However, this is a consequence of \cite[Corollary~3.2-(6)]{AGHSY} by different ways.)
\end{itemize}

In our conclusion of Proposition~\ref{prop2.14}, the chaotic set $\Theta$ is not necessarily to be uncountable; this is caused by that we cannot prove $Y$ in (\ref{prop2.14}.1) has no isolated points in the case where $T$ is not discrete.

In addition, the condition that $\textit{Int}_X(Tx_0)=\emptyset$ for some transitive point is important for our conclusion as is shown by Example~\ref{e1} and the following.

\begin{exa}
Let $X=\mathbb{Z}\cup\{\infty\}$ be the one-point compactification of $\mathbb{Z}$ and let $T=\mathbb{Z}$. Then $\pi\colon(t,x)\mapsto x+t$ induces a non-minimal topologically transitive flow with a unique fixed point $\infty$. Clearly, $(T,X,\pi)$ has no Li-Yorke chaotic pair.
\end{exa}

Now, motivated by Huang and Ye~\cite{HY} for cascade on compact metric spaces, we can put forward the following

\begin{Q}\label{q2.19}
Does Devaney chaos imply the multi-dimensional Li-Yorke chaos for any topological semiflow on a Polish space?
\end{Q}

Under additional conditions (for example, $T$ is a `countable' abelian semigroup and $(T,X,\pi)$ has at least a fixed point), Wang et al. \cite[Theorem~4.9]{WLF} gave a positive solution to Question~\ref{q2.19} for the usual Li-Yorke chaos with no the constraint $s_n\not\in F_n$ in Definition~\ref{def1.2}.

As a consequence of Proposition~\ref{prop2.14}, we can easily obtain the following. Its discrete-time version is \cite[Corollary~3.2-(1)]{AGHSY}.

\begin{cor}\label{cor2.20}
Let $(T,X,\pi)$ be a topological flow on a Polish space $X$ with $T$ an abelian topological group. Suppose that
\begin{itemize}
\item[$(\mathrm{a})$] $(T,X,\pi)$ is topologically transitive such that $\textit{Int}_X(Tx)=\emptyset$ for some $x\in\textit{Trans}(T,X)$;
\item[$(\mathrm{b})$] there exists a fixed point $p$ in $X$.
\end{itemize}
Then $(T,X,\pi)$ is densely multi-dimensional Li-Yorke chaotic.
\end{cor}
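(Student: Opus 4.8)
The plan is to reduce to the machinery already built for Proposition~\ref{prop2.14}, exploiting that here $p$ is a genuine \emph{fixed} point rather than merely periodic. First I would record that, since $p$ is fixed, its stabilizer is $S_p=\{t\in T\,|\,tp=p\}=T$; consequently, choosing $x_0\in\textit{Trans}(T,X)$ with $\textit{Int}_X(Tx_0)=\emptyset$ as in $(\mathrm{a})$, the space $Y=\textrm{cls}_X(S_px_0)$ of Proposition~\ref{prop2.14} is now the whole phase space $Y=\overline{Tx_0}=X$. The crucial gain over Proposition~\ref{prop2.14} is that $X$ has \emph{no isolated points}: if $z\in X$ were isolated, then $\{z\}$ is a nonempty open set, so density of $Tx_0$ forces $z=t_0x_0$ for some $t_0\in T$; as $T$ is a group, $\pi_{t_0}$ is a homeomorphism of $X$, whence $x_0=\pi_{t_0}^{-1}(z)$ is isolated too, giving $\{x_0\}\subseteq\textit{Int}_X(Tx_0)$ and contradicting $(\mathrm{a})$. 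Thus $X$ is a perfect Polish space, which is exactly what was missing in Proposition~\ref{prop2.14} to upgrade an infinite $\Theta$ to an uncountable one. (Note also that by Lemma~\ref{lemII.12} the point $x_0$ is recurrent, which is what feeds the sequences $s_n\notin F_n$ below.)

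Next I would reproduce, verbatim with $Y=X$, the two dense $G_\delta$ constructions of the proof of Proposition~\ref{prop2.14}. Fix a reference sequence $F_1\subseteq F_2\subseteq\dotsb$ of compact sets and, for each $k\ge2$, form the $k$-ary proximal relation
\[
P_k(T,X)=\bigl\{(x_1,\dotsc,x_k)\in X^k\,|\,\exists\,t_n\in T\textrm{ with }t_n(x_1,\dotsc,x_k)\to(p,\dotsc,p)\bigr\}
\]
and the set $R_k(T,X)=\bigcap_{n}U_n$, where $U_n$ collects those $(x_1,\dotsc,x_k)$ admitting $t\in T\setminus F_n$ with $\sum_{i}d(tx_i,x_i)<1/n$. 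Exactly as in (\ref{prop2.14}.2) and (\ref{prop2.14}.4)---using that $S_p=T$, that $Tx_0$ is dense, that there is $t_n\in T$ with $t_nx_0\to p$ (as $p\in\overline{Tx_0}$) together with $s_ip=p$ since $p$ is fixed, and the recurrence sequence $s_n\in T\setminus F_n$ with $s_nx_0\to x_0$ from Lemma~\ref{lemII.12} combined with commutativity of $T$---both $P_k(T,X)$ and $R_k(T,X)$ are dense $G_\delta$ subsets of $X^k$. Hence each $R_k:=X^k\setminus\bigl(P_k(T,X)\cap R_k(T,X)\bigr)$ is an $F_\sigma$-set of the first category.

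Finally I would invoke the Kuratowski--Mycielski theorem (Lemma~\ref{lem2.13}) with $W=X$: since $X$ is perfect, $\bigcap_kJ(R_k)$ is a dense $G_\delta$ in $\mathscr{K}(X)$ realized by Cantor sets, each of which is automatically uncountable and independent in every $R_k$, i.e.\ consists of points whose distinct $k$-tuples all lie in $P_k(T,X)\cap R_k(T,X)$---precisely the multi-dimensional Li-Yorke condition of Definition~\ref{def1.5}. The one point that genuinely goes beyond Proposition~\ref{prop2.14} is \emph{density} of the chaotic set: a single Cantor set, being compact (hence nowhere dense when $X$ is, say, connected), cannot be dense, so I would instead build the Mycielski set by the usual inductive Cantor-scheme refinement, at the $j$-th stage planting a new branch inside the $j$-th member of a fixed countable base of $X$ while preserving independence in all $R_k$. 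This yields an uncountable \emph{dense} set $\Theta=\bigcup_jC_j$ that is independent in every $R_k$; since the reference sequence $\{F_n\}$ was arbitrary, $(T,X,\pi)$ is densely multi-dimensional Li-Yorke chaotic. The main obstacle is precisely this simultaneous bookkeeping---keeping the growing set dense while never creating a forbidden $k$-tuple for any $k$---which is where the perfectness of $X$ secured in the first step is essential.
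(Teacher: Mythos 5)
Your proposal is correct and takes essentially the same route as the paper's own proof: reduce to Proposition~\ref{prop2.14} with $S_p=T$ (so $Y=X$), note that $X$ is perfect, and apply Lemma~\ref{lem2.13} to extract uncountable dense independent sets. You merely make explicit two steps the paper compresses into ``clearly'' and ``follows at once'' --- the group-action argument that $X$ has no isolated points, and the Mycielski-style inductive bookkeeping that upgrades Cantor sets to a \emph{dense} chaotic set.
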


\begin{proof}
Clearly $X$ has no isolated points. Under this situation, the stabilizer $S_p=T$ and so $Y=X$ in (\ref{prop2.14}.1). Hence $Y=X$ has no isolated points. Then Corollary~\ref{cor2.20} follows at once from Proposition~\ref{prop2.14} and the second part of Lemma~\ref{lem2.13}.
\end{proof}

The special case $T=\mathbb{Z}_+$ of the following Proposition~\ref{prop2.21} is just \cite[Theorem~3.3]{AGHSY} or see \cite{Mai04} using different approaches.

\begin{prop}\label{prop2.21}
Let $(T,X,\pi)$ be a topological flow on a Polish space $X$ with $T$ an abelian discrete group. If $(T,X,\pi)$ is of Devaney chaos, then $(T,X,\pi)$ is multi-dimensional Li-Yorke chaotic.
\end{prop}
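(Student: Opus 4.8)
The plan is to reduce everything to the machinery already developed for Proposition~\ref{prop2.14} and then to upgrade its \emph{infinite} chaotic set to an uncountable one by means of the Cantor-set clause of the Kuratowski--Mycielski Lemma~\ref{lem2.13}. First I would fix, exactly as in the proof of Corollary~\ref{cor2.16}, a periodic point $p$ (whose existence follows from Definition~\ref{def2.9}(2)) together with a transitive point $x_0\in\textit{Trans}(T,X)$; non-minimality (Definition~\ref{def2.9}(3)) combined with the density of periodic points forces $\textit{Int}_X(Tx_0)=\emptyset$, so that hypotheses (a) and (b) of Proposition~\ref{prop2.14} hold and, by Lemma~\ref{lemII.12}, $x_0$ is a $T$-recurrent, non-periodic point. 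Writing $S=S_p$ for the stabilizer of $p$ and $Y=\overline{Sx_0}$, the construction of the relations $P_k(T,Y)$ and $R_k(T,Y)$ goes through verbatim; the only thing missing, compared with the uncountable conclusion, is that $Y$ be \emph{perfect}. Thus the whole proposition comes down to the single assertion that $Y$ has no isolated points.

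The hard part will be to promote the recurrence of $x_0$ from $T$ to the subgroup $S$. Because $T$ is discrete and $S=S_p$ is syndetic, $S$ has finite index, say $T=\bigcup_{i=1}^{m}g_iS$ with $g_1=e$. I would extract a sequence $t_k\to\infty$ in $T$ with $t_kx_0\to x_0$; by the pigeonhole principle infinitely many $t_k$ lie in a single coset $g_jS$, so after passing to a subsequence $t_k=g_jh_k$ with $h_k\in S,\ h_k\to\infty$, and $h_k(g_jx_0)=t_kx_0\to x_0$ by commutativity. Hence $x_0\in\omega_S(g_jx_0)$, where $\omega_S(\cdot)$ denotes the $S$-$\omega$-limit set. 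Now $g_jS$ has finite order $d$ in $T/S$, i.e.\ $g_j^{d}=:s^*\in S$; applying the homeomorphisms $\pi_{g_j^{i-1}}$ to the convergence above yields $g_j^{i-1}x_0\in\omega_S(g_j^{i}x_0)$ for each $i$. Since $S$-$\omega$-limit sets are closed and $S$-invariant, $b\in\omega_S(c)$ implies $\omega_S(b)\subseteq\omega_S(c)$, so the chain $\omega_S(x_0)\subseteq\omega_S(g_jx_0)\subseteq\dotsm\subseteq\omega_S(g_j^{d}x_0)=s^*\omega_S(x_0)$ gives $x_0\in s^*\omega_S(x_0)$; invariance of $\omega_S(x_0)$ then forces $Sx_0\subseteq\omega_S(x_0)$, in particular $x_0\in\omega_S(x_0)$. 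Thus $x_0$ is $S$-recurrent. This bootstrap is exactly the place where discreteness of $T$ is essential, since it is what makes $S$ of finite index and activates the pigeonhole; one could alternatively quote a Furstenberg-type recurrence theorem along syndetic subgroups in its stead.

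Non-periodicity of $x_0$ for $S$ is then immediate: were $\{s\in S:sx_0=x_0\}$ syndetic in $S$, it would be syndetic in $T$ and $x_0$ would be $T$-periodic, contradicting non-minimality. With $x_0$ an $S$-recurrent, $S$-non-periodic point, every point of the dense orbit $Sx_0$ is likewise $S$-recurrent and non-periodic, hence non-isolated in $Y$; therefore $Y=\overline{Sx_0}$ is a perfect Polish space (as it must be, consistently with Proposition~\ref{prop2.10}). Finally I would run the argument of Proposition~\ref{prop2.14} on $Y$ to conclude that $P_k(T,Y)\cap R_k(T,Y)$ is a dense $G_\delta$ in $Y^k$ for every $k$, set $R_k=Y^k\setminus\bigl(P_k(T,Y)\cap R_k(T,Y)\bigr)$, and invoke the second part of Lemma~\ref{lem2.13}: since $Y$ has no isolated points, $\bigcap_kJ(R_k)$ contains Cantor sets, and any such Cantor set $\Theta$ is an uncountable set witnessing multi-dimensional Li-Yorke chaos relative to the prescribed reference sequence $\{F_n\}$. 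As $\{F_n\}$ was arbitrary, $(T,X,\pi)$ is multi-dimensional Li-Yorke chaotic, completing the plan.
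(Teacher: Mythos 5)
Your overall reduction is the same as the paper's: fix the periodic point $p$ with stabilizer $S=S_p$, a transitive point $x_0$, set $Y=\overline{Sx_0}$, note that hypotheses (a), (b) of Proposition~\ref{prop2.14} hold, and observe that everything comes down to showing $Y$ has no isolated points, after which the $P_k(T,Y)$, $R_k(T,Y)$ construction together with the Cantor-set clause of Lemma~\ref{lem2.13} yields an uncountable chaotic set. The gap is in your proof of perfectness of $Y$. The inference ``$x_0$ is $S$-recurrent and $S$-non-periodic, hence non-isolated in $Y$'' is not valid. If $x_0$ were isolated in $Y$, then recurrence (whether in the sense of Definition~\ref{def2.11} or in the form $x_0\in\omega_S(x_0)$) only forces the stabilizer $\Sigma=\{s\in S\,|\,sx_0=x_0\}$ to meet the complement of every finite subset of $S$, i.e.\ to be \emph{infinite}, while non-periodicity only says $\Sigma$ is \emph{not syndetic} in $S$; ``infinite but not syndetic'' is perfectly consistent, so no contradiction is reached. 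The implication fails even as an abstract statement: let $\mathbb{Z}^2$ act on $\mathbb{Z}\cup\{\pm\infty\}$ by $(m,n)\cdot y=y+n$ (fixing $\pm\infty$); the point $0$ is recurrent and non-periodic, yet isolated in the closure of its orbit. Nor can the degenerate mechanism behind this be excluded under the hypotheses of the proposition: since $T$ is abelian and $x_0$ is transitive, $\Sigma$ equals the kernel $N=\{t\in T\,|\,\pi_t=\mathrm{id}_X\}$ (an element fixing $x_0$ fixes the dense orbit $Tx_0$, hence all of $X$, and $N\subseteq S_p$), and nothing forbids $N$ from being infinite --- e.g.\ $T=\mathbb{Z}\times\mathbb{Z}$ acting through a single Devaney chaotic homeomorphism $f$ via $(m,n)x=f^nx$ satisfies all assumptions. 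In that case your $S$-recurrence of $x_0$ is witnessed entirely by kernel elements and carries no topological information about $Y$ near $x_0$, so your pigeonhole/$\omega$-limit bootstrap (which is correct as far as it goes, and in fact shows $\omega_S(x_0)=Y$) cannot finish the job.

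What is actually needed --- and what the paper does --- is a Baire category argument, which is also the real place where discreteness of $T$ enters. Since $T$ is discrete and $S$ is syndetic, $T=KS$ for a finite set $K=\{k_1,\dotsc,k_\ell\}$, whence $X=\overline{Tx_0}=\bigcup_{i=1}^{\ell}k_i\overline{Sx_0}$ is a finite union of closed sets; by Baire some $k_iY$, and hence $Y$ itself (each $\pi_{k_i}$ is a homeomorphism), has nonempty interior, and translating by a suitable element of $S$ (which maps $Y$ onto $Y$) places an open ball $B_r(x_0)$ inside $Y$. Since Devaney chaos implies sensitivity and therefore that $X$ has no isolated points (Proposition~\ref{prop2.10}), $x_0$ is non-isolated in $Y$; by $S$-homogeneity of $Y$ and density of $Sx_0$, $Y$ is perfect. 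If you replace your recurrence-based inference by this finite-union Baire step, the rest of your proposal goes through exactly as written.
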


\begin{proof}
Let $p$ be a periodic point with stabilizer $S=\{t\in T\,|\,tp=p\}$ which is a syndetic subgroup of $T$. Then there is a finite subset $K=\{k_1,\dotsc, k_\ell\}$ of $T$ such that $T=KS$. Let $x_0$ be any transitive point for $(T,X,\pi)$; then
\begin{gather*}
X=\overline{Tx_0}=\bigcup_{i=1}^\ell \overline{k_iSx_0}=\bigcup_{i=1}^\ell k_i\overline{Sx_0}.
\end{gather*}
Thus, $\textit{Int}_X\left(k_i\overline{Sx_0}\right)\not=\emptyset$ for some $i$. Since $\pi_{k_i}\colon X\rightarrow X$ is a homeomorphism, hence $Y=\overline{Sx_0}$ has nonempty interior relative to $X$. Thus, $Y$ contains some open ball $B_r(x_0)$ in $X$. This implies that $Y$ has no isolated points. Then the rest is same as Proposition~\ref{prop2.14}.
\end{proof}

It should be mentioned that the 2D-case of Proposition~\ref{prop2.21} implies the Li-Yorke chaos and the latter was just proved by Arai~\cite[Theorem~1.2]{Ar} by different approaches.
\section{Chaos of topologically weakly-mixing dynamics}\label{sec3}
In this section we will study the chaotic dynamics of weakly-mixing topological dynamical systems on Polish spaces with no isolated points.

In Devaney chaos, the periodic points play an important role for capturing the Li-Yorke chaos. However, a weakly-mixing flow does not need to contain any periodic point. So our approaches in this section will be very different with those introduced in the last section for Devaney chaotic flows or semiflows on Polish spaces.

We will introduce basic notions and properties as preliminaries in $\S\ref{sec3.1}$. We shall mainly study the multi-dimensional Li-Yorke chaos of weakly-mixing dynamics in $\S\ref{sec3.2}$. In $\S\ref{sec3.3}$, we will consider the Li-Yorke chaos of weakly-mixing dynamics with abelian phase semigroup or with phase group. Strongly-mixing flows will be considered in $\S\ref{sec3.5}$.

\subsection{Basic properties of weakly-mixing}\label{sec3.1}
Motivated by topologically transitive and the classical topologically weakly-mixing~(\cite[p.~26]{Fur67} and \cite[Def.~9.3, p.~183]{Fur}), we now introduce the following concepts, which are basic conditions we will assume as well.

\begin{defn}\label{def3.1}
Let $(T,X,\pi)$ be any topological semiflow on a topological space $X$ with phase semigroup $T$ and let $N_T(U,V)=\{t\in T\,|\,U\cap t^{-1}V\not=\emptyset\}$ for any two nonempty open sets $U,V$ in $X$. Then
$(T,X,\pi)$ is said to be
\begin{itemize}
\item \textbf{\textit{thickly transitive}} in case $N_T(U,V)$ is \textit{thick} in $T$ for every nonempty open subsets $U,V$ of $X$;

\item \textbf{\textit{IP-transitive}} provided that for every nonempty open subsets $U,V$ of $X$, $N_T(U,V)$ contains an \textit{\textit{IP}-set} in $T$, i.e., $\exists\langle p_i\rangle_{i=1}^\infty$ in $T$ with $p_{i_1}\dotsm p_{i_k}\in N_T(U,V)$ for any $1\le i_1<\dotsm<i_k<\infty$;

\item \textbf{\textit{topologically weakly-mixing}} if the $2$-fold product semiflow $(T,X\times X)$, where the transition map is defined by $t\colon (x,y)\mapsto(tx,ty)$ for each $t\in T$, is topologically transitive; that is to say, $N_T(U,V)\cap N_T(U^\prime, V^\prime)\not=\emptyset$ for any nonempty open sets $U,U^\prime, V,V^\prime$ in $X$.

    (We note that a thickly transitive cascade is also called topologically weakly-mixing in \cite[Def.~9.3, p.~183]{Fur} for any continuous surjection and they are equivalent in that case \cite{Fur67}.)
\end{itemize}
\end{defn}

By definitions, thickly transitive is \textit{IP}-transitive (cf.~e.g., \cite[Lemma~9.1]{Fur} for $T=\mathbb{Z}_+$) and \textit{IP}-transitive is topologically transitive.
Moreover, the following important Lemma~\ref{lem3.2} implies that topologically weakly-mixing is thickly transitive in the case that $T$ is an abelian group (Corollary~\ref{cor3.3}). Moreover, Example~\ref{e1.3} is topologically weakly-mixing (cf.~e.g.,~\cite[Corollary~II.2.2]{G76}).

\begin{lem}[Furstenberg Intersection Lemma in Abelian Case]\label{lem3.2}
A topological semiflow $(T,X,\pi)$, on a topological space $X$ with $T$ abelian, is topologically weakly-mixing if and only if
\begin{gather*}
\mathcal{F}(T,X,\pi):=\left\{A\subseteq T\,|\,\exists\textrm{ nonempty open sets }U,V\textrm{ in }X\textrm{ with }N_T(U,V)\subseteq A\right\}
\end{gather*}
is a filter on $T$; in other words, $\mathcal{F}(T,X,\pi)$ satisfies the following three conditions:
\begin{enumerate}
\item[$(1)$] $A,B\in\mathcal{F}(T,X,\pi)\Rightarrow A\cap B\in\mathcal{F}(T,X,\pi)$;
\item[$(2)$] $A\in\mathcal{F}(T,X,\pi)$ and $A\subset B\Rightarrow B\in\mathcal{F}(T,X,\pi)$; and
\item[$(3)$] $\emptyset\not\in\mathcal{F}(T,X,\pi)$.
\end{enumerate}
\end{lem}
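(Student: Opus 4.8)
The plan is to treat the two implications separately, observing that three of the four filter conditions are essentially formal and that the entire substance lies in the closure of $\mathcal{F}(T,X,\pi)$ under finite intersection in the forward direction.

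First I would dispose of the easy parts. Condition (2) holds for $\mathcal{F}(T,X,\pi)$ with no hypothesis whatsoever: if $N_T(U,V)\subseteq A\subseteq B$ for some nonempty open $U,V$, then the same $U,V$ witness $B\in\mathcal{F}(T,X,\pi)$. Condition (3) is exactly topological transitivity, since $\emptyset\in\mathcal{F}(T,X,\pi)$ would force $N_T(U,V)=\emptyset$ for some nonempty open $U,V$; but taking $U^\prime=U$ and $V^\prime=V$ in the definition of weak mixing gives $N_T(U,V)\neq\emptyset$, so (3) holds in the forward direction. For the converse, once we assume $\mathcal{F}(T,X,\pi)$ is a filter, weak mixing is immediate: given nonempty open $U_1,V_1,U_2,V_2$, each $N_T(U_i,V_i)$ lies in $\mathcal{F}(T,X,\pi)$ (it contains itself), so by (1) their intersection lies in $\mathcal{F}(T,X,\pi)$, and by (3) it is therefore nonempty --- which is precisely $N_T(U_1,V_1)\cap N_T(U_2,V_2)\neq\emptyset$, i.e. transitivity of $X\times X$.

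This leaves the crux: assuming weak mixing, show $\mathcal{F}(T,X,\pi)$ is closed under intersection. Unwinding the definition, given $A,B\in\mathcal{F}(T,X,\pi)$ with $N_T(U_1,V_1)\subseteq A$ and $N_T(U_2,V_2)\subseteq B$, it suffices to produce nonempty open $U,V$ with $N_T(U,V)\subseteq N_T(U_1,V_1)\cap N_T(U_2,V_2)$, for then $N_T(U,V)\subseteq A\cap B$. The idea is to use weak mixing once to manufacture a single element $s\in T$ that simultaneously aligns the two source sets and the two target sets, and then to carve out the aligned pieces by continuity. Concretely, weak mixing applied to the four open sets yields $s\in N_T(U_1,U_2)\cap N_T(V_1,V_2)$, and I would then set
\begin{gather*}
U=U_1\cap\pi_s^{-1}(U_2)\quad\textrm{and}\quad V=V_1\cap\pi_s^{-1}(V_2),
\end{gather*}
both nonempty (by the choice of $s$) and open (as $\pi_s$ is continuous). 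Note that I work throughout with the preimages $\pi_s^{-1}$, never with images: since $(T,X,\pi)$ is only a semiflow the maps $\pi_t$ need not be invertible or even open, and this is the one genuine point of care that forces a preimage formulation in place of the classical invertible one.

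Finally I would verify the inclusion. Take $t\in N_T(U,V)$, so there is $x\in U$ with $\pi_t(x)\in V$. From $x\in U\subseteq U_1$ and $\pi_t(x)\in V\subseteq V_1$ we read off $t\in N_T(U_1,V_1)$. From $x\in\pi_s^{-1}(U_2)$ and $\pi_t(x)\in\pi_s^{-1}(V_2)$ we get $\pi_s(x)\in U_2$ and $\pi_s(\pi_t(x))\in V_2$; here is where abelianness enters, since $\pi_s\pi_t=\pi_{st}=\pi_{ts}=\pi_t\pi_s$, so $\pi_t(\pi_s(x))\in V_2$ with $\pi_s(x)\in U_2$, giving $t\in N_T(U_2,V_2)$. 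Thus $N_T(U,V)\subseteq N_T(U_1,V_1)\cap N_T(U_2,V_2)$ and closure under intersection follows. The only real obstacle is the non-invertibility of $\pi_t$ in the semiflow setting; the commutation identity $\pi_{st}=\pi_{ts}$ neutralizes it, and everything else is formal.
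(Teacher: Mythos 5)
Your proposal is correct and is essentially the paper's own argument: the paper likewise picks a single $t\in N_T(U,U')\cap N_T(V,V')$ via weak mixing, forms $U''=U\cap t^{-1}U'$ and $V''=V\cap t^{-1}V'$, and uses commutativity ($t\tau=\tau t$) to show $N_T(U'',V'')\subseteq N_T(U,V)\cap N_T(U',V')$, exactly matching your $s$, $U=U_1\cap\pi_s^{-1}(U_2)$, $V=V_1\cap\pi_s^{-1}(V_2)$. The only cosmetic difference is that the paper verifies the inclusion by an identity of preimage sets while you chase elements; both handle the non-invertibility of $\pi_t$ in the same way.
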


\begin{proof}
(Following Furstenberg's framework of the proof of \cite[Proposition~II.3]{Fur67}) The sufficiency of the lemma is obvious.

Now for the necessity of the lemma, let $(T,X,\pi)$ be topologically weakly-mixing as in Def.~\ref{def3.1}.
Clearly $\mathcal{F}(T,X,\pi)$ satisfies properties (2) and (3). Now to prove property (1), let $A,B\in\mathcal{F}(T,X,\pi)$ be any given. Then $N_T(U,V)\subseteq A$ and $N_T(U^\prime,V^\prime)\subseteq B$ for some nonempty open sets $U,U^\prime,V,V^\prime$ in $X$. Take any
$t\in N_T(U,U^\prime)\cap N_T(V,V^\prime)\not=\emptyset$ and set $U^{\prime\prime}=U\cap t^{-1}U^\prime, V^{\prime\prime}=V\cap t^{-1}V^\prime$. Then for each $\tau\in N_T(U^{\prime\prime},V^{\prime\prime})$, by $t\tau=\tau t$, it follows that
\begin{gather*}
\emptyset\not=U^{\prime\prime}\cap\tau^{-1}V^{\prime\prime}=\left(U\cap t^{-1}U^\prime\right)\cap\tau^{-1}\left(V\cap t^{-1}V^\prime\right)=\left(U\cap\tau^{-1}V\right)\cap t^{-1}\left(U^\prime\cap\tau^{-1}V^\prime\right).
\end{gather*}
This implies that $U\cap\tau^{-1}V\not=\emptyset$ and $U^\prime\cap\tau^{-1}V^\prime\not=\emptyset$. Thus $N_T(U^{\prime\prime},V^{\prime\prime})\subseteq A\cap B$ and further $\mathcal{F}(T,X,\pi)$ is a filter on $T$.

This thus proves Lemma~\ref{lem3.2}.
\end{proof}

Thus, if $(T,X,\pi)$ is topologically weakly-mixing with $T$ abelian; then any $l$-fold diagonal-wise product semiflow $(T,X\times\dotsm\times X)$ is topologically transitive (cf.~\cite[Proposition~II.3]{Fur67} for cascades), and moreover, $(T,X\times\dotsm\times X)$ is also topologically weakly-mixing (cf.~\cite[Exercise~9.7]{Aus} and \cite[Theorem~1.11]{G03} for abelian groups).

The commutativity of $T$ has played a role in the proof of the necessity of Lemma~\ref{lem3.2}; otherwise, see \cite[p.~277]{W00} for counterexample. However, the necessity of Lemma~\ref{lem3.2} is just what we will need for proving our main result Proposition~\ref{prop3.22} later.

\begin{cor}\label{cor3.3}
Let $(T,X,\pi)$ be a topological semiflow with $T$ abelian on a topological space $X$. Then the following statements hold.
\begin{enumerate}
\item[$(1)$] If $(T,X,\pi)$ is topologically weakly-mixing such that $N_T(X,V)$ is thick in $T$ for any nonempty open set $V\subset X$, then it is thickly transitive.
\item[$(2)$] If $(T,X,\pi)$ is thickly transitive with $e\in T$, then $N_T(U,U)\cap N_T(V,U)\not=\emptyset$ and $N_T(U,U)\cap N_T(U,V)\not=\emptyset$ for any nonempty open sets $U,V$ in $X$.
\item[$(3)$] If $(T,X,\pi)$ is thickly transitive and its almost periodic points are dense in $X$, then $(T,X,\pi)$ is topologically weakly-mixing. (This item does not need the condition that $T$ abelian.)
\item[$(4)$] If each transition map $\pi_t\colon X\rightarrow X$ is a surjection and $e\in T$, then $(T,X,\pi)$ is topologically weakly-mixing iff it is thickly transitive.
\end{enumerate}
\end{cor}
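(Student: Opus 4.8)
The plan is to handle each of the four items separately, drawing on the filter structure established in Lemma~\ref{lem3.2} and on the basic implications among the transitivity notions recorded just after Definition~\ref{def3.1}.

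\emph{Item (1).} Here I would exploit the filter $\mathcal{F}(T,X,\pi)$ directly. Given nonempty open $U,V\subseteq X$, weak mixing gives $N_T(U,V)\in\mathcal{F}(T,X,\pi)$ and the hypothesis gives $N_T(X,V)\in\mathcal{F}(T,X,\pi)$ as well (since $X$ is open). By Lemma~\ref{lem3.2}, the intersection $N_T(U,V)\cap N_T(X,V)$ lies in $\mathcal{F}(T,X,\pi)$, so it contains some $N_T(U',V')$; but more to the point, I would argue that the assumed thickness of $N_T(X,V)$ transfers to $N_T(U,V)$ because $N_T(U,V)\subseteq N_T(X,V)$ fails in the wrong direction, so instead the cleaner route is to show $N_T(U,V)$ itself is thick by combining it with an arbitrary compact set $K$: using the filter property, intersect the translates of $N_T(X,V)$ against $N_T(U,V)$ and use commutativity to absorb $K$. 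The key mechanism is that thickness of one member of the filter, together with the filter being closed under finite intersection, forces thickness of all members meeting it.

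\emph{Item (2).} Assume $e\in T$ and thick transitivity. Since $N_T(U,U)$ is thick and, by the Standing notation, any set meeting every thick set is syndetic while any syndetic set meets every thick set, I would show $N_T(V,U)$ is syndetic (or at least that it meets the thick set $N_T(U,U)$). Concretely, thick transitivity makes both $N_T(U,U)$ and $N_T(V,U)$ nonempty with $N_T(U,U)$ thick; the intersection is nonempty because a thick set intersects every syndetic set, and $N_T(V,U)$ is syndetic by Lemma~\ref{lem2.3}-type reasoning or directly from thickness. The symmetric statement $N_T(U,U)\cap N_T(U,V)\neq\emptyset$ follows identically.

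\emph{Items (3) and (4).} For (3), thick transitivity already gives topological transitivity, and with dense almost periodic points one has an M-semiflow; I would then invoke the product structure---thick transitivity of $(T,X)$ yields transitivity of $(T,X\times X)$ by pairing thick sets against the density of almost periodic (hence minimal) points, so that $N_T(U,V)\cap N_T(U',V')\neq\emptyset$. This is where commutativity is explicitly \emph{not} needed, as the statement records. For (4), with each $\pi_t$ surjective and $e\in T$, thick transitivity trivially implies weak mixing via (3)-type pairing, and the converse is item (1) once surjectivity guarantees $N_T(X,V)=T$ is thick.

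The main obstacle I anticipate is item (1): one must propagate thickness from the single family $\{N_T(X,V)\}$ to the general $N_T(U,V)$ without the reverse inclusion being available. The delicate point is using the filter's closure under intersection (Lemma~\ref{lem3.2}) together with the translation-invariance afforded by commutativity to show that for every compact $K$ there is $t$ with $Kt\subseteq N_T(U,V)$; getting the compact set $K$ absorbed correctly---rather than into $N_T(X,V)$---is the step that requires care.
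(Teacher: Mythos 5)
Your proposal fails at items (2) and (4), and the mechanism you state for item (1) is not correct as written. In (2) you reduce everything to the claim that $N_T(V,U)$ is syndetic, ``by Lemma~\ref{lem2.3}-type reasoning or directly from thickness.'' Neither route is available: Lemma~\ref{lem2.3} requires dense almost periodic points, which item (2) does not assume, and thickness does not imply syndeticity --- under thick transitivity \emph{all} the return sets $N_T(U,V)$ are thick, and two thick sets may well be disjoint (in $\mathbb{Z}$, two complementary unions of longer and longer intervals are both thick). So ``thick meets syndetic'' cannot be invoked here; this is precisely why the paper argues differently: pick $a\in N_T(U,V)$, set $W=U\cap a^{-1}V\not=\emptyset$, and apply thickness of $N_T(W,W)$ to the compact set $\{e,a\}$ --- this is exactly where the hypothesis $e\in T$ enters --- to get $b$ with $\{b,ab\}\subseteq N_T(W,W)$; then $W\cap b^{-1}W\not=\emptyset$ gives $b\in N_T(U,U)$, while $W\cap(ab)^{-1}W\not=\emptyset$ plus commutativity (for $x\in W$ with $abx\in W$ one has $ax\in V$ and $b(ax)=abx\in U$) gives $b\in N_T(V,U)$. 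The same error sinks your item (4): a ``(3)-type pairing'' is illegitimate there because (4) assumes no dense almost periodic points, so no syndeticity is available. The paper instead proves the sufficiency direction of (4) by a direct Furstenberg-style computation built on item (2): given $U_1,U_2,U_3,U_4$, form $V=U_1\cap t_1^{-1}U_2\not=\emptyset$ and $W=t_1^{-1}U_3\cap t_2^{-1}V\not=\emptyset$, use (2) to find $t_3$ with $W\cap t_3^{-1}W\not=\emptyset$ and $U_4\cap t_3^{-1}W\not=\emptyset$, and check that $t=t_2t_3$ lies in $N_T(U_3,U_2)\cap N_T(U_4,U_1)$. (The necessity direction of (4) does follow from (1) via $N_T(X,V)=T$, exactly as you say.)

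For item (1), the principle you call the key mechanism --- ``thickness of one member of the filter, together with the filter being closed under finite intersection, forces thickness of all members meeting it'' --- is false for filters in general (a filter can contain a thick set together with a syndetic, non-thick set), and you never replace it with an actual argument; you only flag the absorption of $K$ as ``the step that requires care.'' The paper's resolution translates the \emph{target sets}, not $N_T(X,V)$: given $K=\{t_1,\dotsc,t_l\}$, thickness of $N_T(X,V)$ yields $t_K$ with $Kt_K\subseteq N_T(X,V)$, which says precisely that each open set $(t_it_K)^{-1}V$ is nonempty; Lemma~\ref{lem3.2} then provides $t\in\bigcap_{i}N_T\bigl(U,(t_it_K)^{-1}V\bigr)$, and commutativity gives $t_it_Kt\in N_T(U,V)$ for every $i$, i.e.\ $K(t_Kt)\subseteq N_T(U,V)$, so $N_T(U,V)$ is thick. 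Only your item (3) is essentially sound and matches the paper: Lemma~\ref{lem2.3} makes each $N_T(U,V)$ syndetic, thick transitivity makes each one thick, and a syndetic set meets every thick set.
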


\begin{proof}
(1): Given any nonempty open sets $U,V$ in $X$ and any finite subset $K=\{t_1,\dotsc,t_l\}$ of $T$, since $Kt_K\subseteq N_T(X,V)$ for some $t_K\in T$ and so by Lemma~\ref{lem3.2}
\begin{gather*}
N_T(U,(t_1t_K)^{-1}V)\cap N_T(U,(t_2t_K)^{-1}V)\cap\dotsm\cap N_T(U,(t_lt_K)^{-1}V)\not=\emptyset,
\end{gather*}
then we may take some $t\in T$ such that $Kt\subseteq N_T(U,V)$. Thus $N_T(U,V)$ is thick in $T$ and $(T,X)$ is thickly transitive.

(2): Let $a\in N_T(U,V)$ and $W:=U\cap a^{-1}V\,(\not=\emptyset)$. Since $N_T(W,W)$ is thick in $T$ and $\{e,a\}\subset T$, one can find some $b\in T$ with
$\{b,ab\}\subseteq N_T(W,W)$. Then
$W\cap b^{-1}W\not=\emptyset$ and $W\cap(ab)^{-1}W\not=\emptyset$.
These imply that $U\cap b^{-1}U\not=\emptyset$ and $abW\cap W\not=\emptyset$.
The latter implies that
$bV\cap U\not=\emptyset$, equivalently $V\cap b^{-1}U\not=\emptyset$.
Thus $N_T(U,U)\cap N_T(V,U)\not=\emptyset$.

Similarly, let $a\in N_T(V,U)$ and $W=V\cap a^{-1}U$. We can find $b\in T$ with $\{b,ab\}\subseteq N_T(W,W)$. Then by $W\cap b^{-1}W\not=\emptyset$, it follows that $a^{-1}U\cap b^{-1}a^{-1}U\not=\emptyset$ and so $U\cap b^{-1}U\not=\emptyset$. By $W\cap (ab)^{-1}W\not=\emptyset$, it follows that $a^{-1}U\cap(ab)^{-1}V\not=\emptyset$ and so $U\cap b^{-1}V\not=\emptyset$. Thus $N_T(U,U)\cap N_T(U,V)\not=\emptyset$.

(3): By Lemma~\ref{lem2.3}, it follows that $(T,X)$ is syndetically transitive. Now for any nonempty open subsets $U_1,U_2, V_1,V_2$ of $X$, $N_T(U_1,V_1)\cap N_T(U_2,V_2)\not=\emptyset$ for $N_T(U_i,V_i)$ is syndetic and thick in $T$. Thus $(T,X)$ is topologically weakly-mixing. (Note that the proof of this implication does not use the assumption that $T$ is abelian.)

(4): The necessity follows immediately from (1). Now to prove the sufficiency, let $(T,X)$ be thickly transitive. Given any nonempty open subsets $U_i, i=1,2,3,4$ of $X$, there are elements $t_1,t_2\in T$ such that $V:=U_1\cap t_1^{-1}U_2\not=\emptyset$ and $W:=t_1^{-1}U_3\cap t_2^{-1}V\not=\emptyset$. Moreover by (2), it follows that there is some $t_3\in T$ such that $W\cap t_3^{-1}W\not=\emptyset$ and $U_4\cap t_3^{-1}W\not=\emptyset$. Letting $t=t_2t_3$, there holds
\begin{gather*}
t_1^{-1}\left(t^{-1}U_2\cap U_3\right)\supseteq t_1^{-1}t^{-1}U_2\cap t^{-1}U_1\cap t_1^{-1}U_3\supseteq t^{-1}V\cap W\supseteq t_3^{-1}W\cap W\not=\emptyset.
\end{gather*}
Thus $t^{-1}U_2\cap U_3\not=\emptyset$. On the other hand,
\begin{gather*}
t^{-1}U_1\cap U_4\supseteq (t_1t)^{-1}U_2\cap t^{-1}U_1\cap U_4=t^{-1}V\cap U_4\supseteq t_3^{-1}W\cap U_4\not=\emptyset.
\end{gather*}
Thus $N_T(U_3, U_2)\cap N_T(U_4,U_1)\not=\emptyset$ and then $(T,X)$ is topologically weakly-mixing.

This proves Corollary~\ref{cor3.3}.
\end{proof}

Notice here that if $T=\{f^n\,|\,n=1,2,\dotsc\}$ and $(T,X)$ is point-transitive, then $T$ is abelian such that every transition $f^n\colon X\rightarrow X$ is a surjection so that $N_T(X,V)=T$. Moreover, for any $(T,X,\pi)$ and nonempty open set $V\subset X$, if $\pi_t\colon X\rightarrow X$ is surjective for every $t\in T$, then $N_T(X,V)=T$. More generally, if $\{t\in T\,|\,\pi_t\textrm{ is surjective}\}$ is thick in $T$, then $N_T(X,V)$ is thick in $T$.

\begin{defn}
Let $(T,X,\pi)$ be a topological semiflow on a compact Hausdorff space $X$. Then:
\begin{itemize}
\item[(1)] A point $x$ is called a \textbf{\textit{distal point}} for $(T,X,\pi)$ if no point in $X$ other than $x$ is proximal to $x$ under $\pi$ (cf.~e.g.,~\cite[Def.~8.2, p.~160]{Fur} and \cite[p.~66]{Aus}).
\item[(2)] A point $x$ is called a \textbf{\textit{locally distal point}} for $(T,X,\pi)$ if no point in $\overline{Tx}$ other than $x$ is proximal to $x$ under $\pi$ (cf.~e.g.,~\cite{V, AF, EEN}).
\end{itemize}
\end{defn}

It is well known that every minimal topologically weakly-mixing map $f$ of a compact metric space $X$ to itself has no distal point for $(X,f)$ (cf.~\cite{V} and also \cite[Theorem~9.12]{Fur}). This will be generalized as follows.

\begin{prop}\label{prop3.5}
If $(T,X,\pi)$ is an \textit{IP}-transitive semiflow on a first countable compact Hausdorff non-singleton space $X$, then there exists no point of $X$ is distal for the semiflow $(T,X,\pi)$.
\end{prop}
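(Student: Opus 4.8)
The plan is to argue by contradiction through the enveloping (Ellis) semigroup $E(X)\subseteq X^X$, which is a compact right-topological semigroup since $X$ is compact Hausdorff, hence contains idempotents by Ellis--Numakura. I would use the standard dictionary between proximality and idempotents: a pair $(x,y)$ is proximal iff $px=py$ for some $p\in E(X)$, and for every idempotent $u=u^2\in E(X)$ and every $z\in X$ the pair $(z,uz)$ is proximal, because $u(uz)=uz$. The first consequence I would record is that a \emph{distal} point $x_0$ is fixed by every idempotent: applying this with $z=x_0$, the pair $(x_0,ux_0)$ is proximal, so distality forces $ux_0=x_0$ for all idempotents $u\in E(X)$.

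Next I would import the IP-structure. Writing each idempotent ultrafilter $u$ on $T$ as a limit $\sigma_u(z)=\lim_{t\to u}tz$ (the limit exists by compactness, and by first countability of $X$ it may be computed along sequences), one obtains an idempotent $\sigma_u\in E(X)$, and by Galvin--Glazer/Hindman every IP-set is a member of some such $u$; dually, $A\subseteq T$ is IP$^*$ (meets every IP-set) iff $A\in u$ for all idempotents $u$. Combining this with the previous paragraph yields the key recurrence statement for a distal $x_0$: for every neighbourhood $U$ of $x_0$ the return set $N_T(x_0,U)=\{t:tx_0\in U\}$ is IP$^*$, equivalently $\sigma_u(x_0)=x_0$ for every idempotent $u$.

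With these in place, assume $x_0$ is distal. Since $X$ is non-singleton Hausdorff, fix $q\neq x_0$, a neighbourhood base $U_1\supseteq U_2\supseteq\cdots$ at $x_0$, and an open $V\ni q$ with $\overline{U_n}\cap\overline V=\emptyset$. I would then feed IP-transitivity into the orbit of $x_0$: for each $n$ the set $N_T(V,U_n)$ contains an IP-set $F_n$, and since $N_T(x_0,U_n)$ is IP$^*$ it meets $F_n$; choosing $t_n\in N_T(x_0,U_n)\cap N_T(V,U_n)$ gives $t_nx_0\in U_n$ together with a witness $z_n\in V$ satisfying $t_nz_n\in U_n$, so both $t_nx_0$ and $t_nz_n$ converge to $x_0$. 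Passing to a convergent subsequence $z_n\to z^{*}\in\overline V$ (first countability plus compactness), with $z^{*}\neq x_0$, the target is to upgrade this to genuine proximality of the \emph{fixed} pair $(x_0,z^{*})$, contradicting distality.

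The hard part is exactly this upgrade. What the construction delivers at once is that the \emph{varying} pairs $(x_0,z_n)$ are driven to the diagonal, i.e. that $(x_0,z^{*})$ is merely \emph{regionally} proximal, whereas distality only excludes true proximality; the obstruction is that the witnesses $z_n$ move with $t_n$ and the maps $\pi_{t_n}$ are not equicontinuous, so $N_T(V,U)\in u$ by itself does \emph{not} force any fixed point of $\overline V$ to be carried near $x_0$ by $\sigma_u$. To close the gap I would arrange the $t_n$ to lie in a single idempotent ultrafilter $u$ (using that $N_T(x_0,U_n)$ belongs to \emph{every} idempotent ultrafilter while $F_n\subseteq N_T(V,U_n)$ selects one) and exploit the iterated-limit identity $u=u\cdot u$ to convert the double point-and-time limit into the single equality $\sigma_u(z^{*})=x_0=\sigma_u(x_0)$, making $(z^{*},x_0)$ proximal via $\sigma_u$. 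Equivalently, one may first pass to the minimal set $M=\overline{Tx_0}$ (minimal because a distal point is almost periodic), where every point is almost periodic and the idempotent bookkeeping is cleaner, and transport back the classical no-distal-point phenomenon for weakly-mixing minimal flows in the spirit of Veech. In either route I expect the genuine technical difficulty to be precisely this localisation of the open-set IP-transitivity to the single orbit of $x_0$.
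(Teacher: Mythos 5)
Your preliminary machinery is sound (distal points are fixed by every idempotent of $E(X)$, hence have IP$^*$ return-time sets; this is exactly what the paper outsources to its reference [DL]), and you have correctly located the crux: the witnesses $z_n\in V$ move with $t_n$, so the naive construction only yields regional proximality. The problem is that neither of the devices you offer actually closes this gap. (a) Nothing produces a \emph{single} idempotent ultrafilter $u$ containing every $N_T(V,U_n)$: IP-transitivity hands you, for each $n$ separately, an IP set $F_n\subseteq N_T(V,U_n)$, and an idempotent containing (the tails of) $F_1$ has no reason to contain $F_2$; the sets $N_T(V,U_n)$ decrease, but a decreasing family of sets, each containing some IP set, is not thereby contained in a common idempotent -- your parenthetical "selects one" selects a \emph{different} $u_n$ for each $n$. (b) More decisively, even granting such a $u$, the step $\sigma_u(z^{*})=x_0$ does not follow: $N_T(V,U_n)\in u$ says only that for $u$-many $t$ \emph{some $t$-dependent} point of $V$ lands in $U_n$; the existential quantifier over the witness sits inside the quantifier over $t$, and the identity $u=u\cdot u$ operates on the $t$-variable only, so it cannot extract a fixed $z^{*}$ with $\{t: tz^{*}\in U_n\}\in u$. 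Since $\sigma_u$ is merely a pointwise limit of the $\pi_t$ (not continuous), $\sigma_u(\overline{V})$ need not be closed and there is no compactness argument forcing $x_0\in\sigma_u(\overline{V})$. (c) The fallback through $M=\overline{Tx_0}$ also fails: IP-transitivity and weak mixing are properties of $(T,X)$, not of the subsystem $(T,M)$, which may perfectly well be minimal equicontinuous, so there is no Veech-type "no distal points" theorem available on $M$.

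The missing idea -- and what the paper's terse phrase "this together with the first countability" encodes -- is to \emph{shrink the far-away open set instead of fixing the witness}. Keep your decreasing neighbourhood base $U_1\supseteq U_2\supseteq\cdots$ at $x_0$ with $U_1\cap V=\emptyset$, and build nonempty open sets $V_0=V\supseteq \overline{V_1}\supseteq V_1\supseteq\overline{V_2}\supseteq\cdots$ inductively: at stage $n$, the set $N_T(x_0,U_n)\cap N_T(V_{n-1},U_n)$ is nonempty, because the first factor is IP$^*$ (distality of $x_0$) while the second contains an IP set (IP-transitivity applied to the \emph{current} set $V_{n-1}$, not to the original $V$); pick $t_n$ in it, and then, by regularity of the compact Hausdorff space $X$, pick a nonempty open $V_n$ with $\overline{V_n}\subseteq V_{n-1}\cap t_n^{-1}U_n$. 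Compactness of the nested closures gives a point $z^{*}\in\bigcap_n\overline{V_n}\subseteq V$, and now the quantifiers are in the right order: for \emph{every} $n$, $t_nx_0\in U_n$ and $t_nz^{*}\in U_n$. Since the $U_n$ form a base at $x_0$ and $X$ is compact Hausdorff, $(t_nx_0,t_nz^{*})$ eventually enters every entourage, so the fixed pair $(x_0,z^{*})$ is genuinely proximal with $z^{*}\neq x_0$, contradicting distality. This nested-compactness trick is where first countability is really used (a countable base at $x_0$ is needed for the induction to exhaust proximality), it requires no idempotent bookkeeping beyond the IP$^*$ fact you already established, and it is the step your proposal leaves open.
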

\noindent
Note. If $(T,X,\pi)$ is equicontinuous, then `first countable' is superfluous.

\begin{proof}
If otherwise, let $x\in X$ be a distal point of $(T,X,\pi)$. Take two nonempty open subsets $U,V$ of $X$ with $x\in V$ and $U\cap V=\emptyset$. Since $x$ is a distal point of $(T,X,\pi)$, hence $N_T(x,V)$ is an $\textit{IP}^*$-set in $T$ by \cite{DL}. By the \textit{IP}-transitive, it follows that $N_T(U,V)$ contains an \textit{IP}-set in $T$. Thus $N_T(x,V)\cap N_T(U,V)\not=\emptyset$. This together with the first countability implies that there exists some point $y\in U$ such that $x$ is proximal to $y$ under $\pi$. This is a contradiction to the assumption that $x$ is a distal point for $(T,X,\pi)$. Thus the proof is completed.
\end{proof}

Now from Corollary~\ref{cor3.3}-(1) and Proposition~\ref{prop3.5} we can easily obtain the following, which implies that every topologically weakly-mixing transformation $f$ of $X$ does not have any distal points for $(X,f)$.

\begin{cor}\label{cor3.6}
Let $(T,X,\pi)$ be topologically weakly-mixing with a compact Hausdorff non-singleton phase space $X$ which is first countable. If $T$ is abelian such that $N_T(X,V)$ is thick in $T$ for each nonempty open set $V\subset X$, then no point of $X$ is distal for $(T,X,\pi)$.
\end{cor}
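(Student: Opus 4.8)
The plan is to read the conclusion off as a short chain of the two results the corollary is advertised to follow from, with essentially no new work. First I would check that the three hypotheses in force---$(T,X,\pi)$ topologically weakly-mixing, $T$ abelian, and $N_T(X,V)$ thick in $T$ for every nonempty open $V\subseteq X$---are exactly the hypotheses of Corollary~\ref{cor3.3}-(1). Applying that corollary immediately upgrades weak mixing to \emph{thick transitivity}: $N_T(U,V)$ is thick in $T$ for every pair of nonempty open sets $U,V\subseteq X$.

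The second step is to pass from thick transitivity to \emph{IP-transitivity}. This is precisely the implication recorded just after Definition~\ref{def3.1}, namely that a thick subset of $T$ contains an IP-set (the analogue, in our setting, of \cite[Lemma~9.1]{Fur} for $T=\mathbb{Z}_+$). Hence $N_T(U,V)$ contains an IP-set for all nonempty open $U,V$, which is exactly the definition of IP-transitivity of $(T,X,\pi)$.

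With IP-transitivity established, all hypotheses of Proposition~\ref{prop3.5} are met: $X$ is a first countable compact Hausdorff non-singleton space and $(T,X,\pi)$ is IP-transitive. That proposition then gives directly that no point of $X$ is distal for $(T,X,\pi)$, which is the assertion of the corollary. No appeal to the distal-point argument need be repeated, since it is packaged inside Proposition~\ref{prop3.5}.

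Since each step is a verbatim invocation of an already-proved statement, I do not anticipate a genuine obstacle. The only point asking for mild care is the passage ``thick $\Rightarrow$ contains an IP-set'' that underlies the thickly-transitive $\Rightarrow$ IP-transitive implication: in the abelian semigroup setting one constructs the IP-set inductively, first picking $p_1\in N_T(U,V)$ and then, with $p_1,\dots,p_k$ already chosen, using thickness of $N_T(U,V)$ to translate the compact set collecting the finitely many finite products $p_{i_1}\cdots p_{i_j}$ into $N_T(U,V)$, thereby selecting $p_{k+1}$ so that every new finite product again lands in $N_T(U,V)$. Once this fact---cited right after Definition~\ref{def3.1}---is granted, the corollary is immediate.
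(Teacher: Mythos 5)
Your proof is correct and follows exactly the paper's intended route: Corollary~\ref{cor3.3}-(1) to upgrade weak mixing to thick transitivity, the remark after Definition~\ref{def3.1} (thickly transitive $\Rightarrow$ \textit{IP}-transitive, via the standard inductive IP-set construction you sketch), and then Proposition~\ref{prop3.5} to conclude there are no distal points. Nothing is missing; this is precisely how the paper derives the corollary.
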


Although the topologically weak-mixing is equivalent to the thickly transitive for any continuous surjection of $X$, yet they appear to be conceptually different for the general semigroup setting. It should be noted that the term ``distal'' cannot be replaced by ``locally distal'' in Corollary~\ref{cor3.6}; for example, the full shift system of finite symbols is topologically strongly mixing containing dense periodic points.

Any minimal equicontinuous semiflow with abelian phase semigroup is distal by \cite{DX} and further syndetically transitive by Lemma~\ref{lem2.3} and so it is not \textit{IP}-transitive by Proposition~\ref{prop3.5}. Thus, syndetically transitive $\not\Rightarrow$ \textit{IP}-transitive in general. Particularly, if $X$ is a compact Hausdorff group and $T=X$ and $\pi\colon(t,x)\mapsto tx$ of $T\times X$ to $X$, then $(T,X,\pi)$ is syndetically but not \textit{IP}-transitive.

The following two lemmas are important for our later discussion.

\begin{lem}\label{lem3.7}
If $(T,X,\pi)$ is a topologically weakly-mixing flow (resp. semiflow) on a non-singleton Polish space $X$ with $T$ a group (resp. with $T$ an abelian semigroup), then $T$ is not compact.
\end{lem}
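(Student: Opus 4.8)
The plan is to argue by contradiction: assume $T$ is compact and derive that $X$ must be a singleton. The whole argument only needs $X$ Hausdorff (which Polish guarantees), not compact. First I would exploit weak-mixing to produce a transitive point of the $2$-fold product. By definition the product $(T,X\times X)$, with transition maps $t\colon(a,b)\mapsto(ta,tb)$, is a topological semiflow and is topologically transitive. Since $X$ is Polish, so is $X\times X$, and Basic Fact~\ref{fac1} gives that $\textit{Trans}(T,X\times X)$ is a dense $G_\delta$, in particular nonempty; fix a point $(x,y)$ in it. If $x=y$, then the orbit $T(x,y)$ lies in the diagonal $\varDelta=\{(z,z)\,|\,z\in X\}$, which is a \emph{proper} closed subset of $X\times X$ because $X$ is non-singleton, so this orbit cannot be dense, contradicting the transitivity of $(x,y)$. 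Hence $x\neq y$.

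Next I would use the compactness of $T$ to upgrade the dense orbit to the whole space. The orbit map $\Phi\colon T\to X\times X$, $t\mapsto(tx,ty)$, is continuous, so $\Phi(T)=T(x,y)$ is compact, hence closed in the Hausdorff space $X\times X$; being also dense, it must equal $X\times X$. Thus $\Phi$ is surjective, i.e. every point of $X\times X$ is of the form $(tx,ty)$ for some $t\in T$.

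Finally I would extract the contradiction, and here the two hypotheses of the lemma are treated separately. In the flow case $T$ is a group, so each $\pi_t$ is a bijection (with inverse $\pi_{t^{-1}}$); surjectivity of $\Phi$ yields some $t$ with $(tx,ty)=(x,x)$, i.e. $tx=ty$, and injectivity of $\pi_t$ then forces $x=y$, contradicting $x\neq y$. In the semiflow case $T$ is merely an abelian semigroup, and the transition maps need not be injective; this failure of injectivity is the one genuine obstacle, and I would circumvent it by a ``swap'' argument using commutativity. Surjectivity of $\Phi$ provides $u\in T$ with $(ux,uy)=(x,x)$ and $s\in T$ with $(sx,sy)=(y,x)$. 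Since $T$ is abelian, $\pi_{su}=\pi_s\pi_u=\pi_u\pi_s=\pi_{us}$ by the semiflow identity, so on the one hand $\pi_{su}(x)=\pi_s(ux)=sx=y$, while on the other hand $\pi_{us}(x)=\pi_u(sx)=uy=x$; hence $y=x$, again a contradiction. Either way $X$ is a singleton, which is excluded, so $T$ cannot be compact.
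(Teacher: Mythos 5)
Your proof is correct. Its skeleton coincides with the paper's: argue by contradiction, use Basic Fact~\ref{fac1} on the Polish space $X\times X$ to produce a transitive point of the product system, and use compactness of $T$ to conclude that the (compact, hence closed, hence dense-and-closed) orbit of that point is all of $X\times X$. Where you genuinely diverge is in how the contradiction is then extracted. In the group case the paper upgrades ``the transitive orbit is everything'' to minimality of $(T,X\times X)$ (via $Tt=T$) and contradicts minimality with the proper closed invariant set $\varDelta_{X\times X}$, whereas you send the off-diagonal transitive point $(x,y)$ to $(x,x)$ by some $t$ and invoke injectivity of $\pi_t$ --- more direct, no minimality needed. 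In the abelian semigroup case the paper takes a transitive point $w$ of the \emph{base} system $(T,X)$, picks $s$ with $s(x_0,y_0)=(w,w)$, and uses $sT=Ts$ to get $s(X\times X)\subseteq\varDelta_{X\times X}$, so that $\pi_s$ is a constant map and $X=\overline{Tw}$ collapses to a point; your swap trick --- choosing $u,s$ with $u(x,y)=(x,x)$, $s(x,y)=(y,x)$ and comparing $(su)x=y$ with $(us)x=x$ --- reaches the contradiction entirely inside the product system, never needing transitivity of $(T,X)$ itself or the constant-map observation. Both endgames use commutativity essentially and are equally elementary; yours is slightly more economical in its inputs, while the paper's makes the degeneracy caused by compactness (a constant transition map) explicit.
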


\begin{proof}
Case 1: Let $T$ be a topological group. If $T$ is compact, then $(T,X\times X)$ is minimal. However, this contradicts that $\varDelta_{X\times X}$ is an invariant closed subset of $X\times X$.

Case 2: Let $T$ be an abelian topological semigroup. Suppose $T$ is compact. Then it holds that $T(x_0,y_0)=X\times X$ for any $(x_0,y_0)\in \textit{Trans}(T,X\times X)$. Let $w\in\textit{Trans}(T,X)$ and take $s\in T$ with $s(x_0,y_0)=(w,w)$. From that
\begin{equation*}
s(X\times X)=sT(x_0,y_0)=Ts(x_0,y_0)=T(w,w)=\varDelta_{X\times X},
\end{equation*}
we can see that $X$ is a singleton contradicting that $X$ is a non-singleton Polish space.

The proof of Lemma~\ref{lem3.7} is therefore completed.
\end{proof}

\begin{lem}\label{lem3.8}
Let $(T,X,\pi)$ be a topological flow on a compact Hausdorff space $X$ with phase group $T$ and $x_0\in X$. If $Tx_0=X$ and $\textit{Int}_X(Fx_0)\not=\emptyset$ for some compact subset $F$ of $T$, then $x_0$ is a periodic point.
\end{lem}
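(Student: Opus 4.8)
The plan is to exploit the group structure to show first that the whole space is already a compact-set orbit of $x_0$, i.e.\ that $X=K_0x_0$ for some \emph{compact} $K_0\subseteq T$, and then to convert this into syndeticity of the stabilizer $S_{x_0}=\{t\in T\,|\,tx_0=x_0\}$, which by definition is exactly what ``$x_0$ is periodic'' means.

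First I would set $U=\textit{Int}_X(Fx_0)$, a nonempty open subset of $X$ with $U\subseteq Fx_0$. Because $T$ is a group, every transition map $\pi_t$ is a homeomorphism of $X$ (with inverse $\pi_{t^{-1}}$), so each $tU$ is open. I claim the family $\{tU\,|\,t\in T\}$ covers $X$: pick $u\in U$ and write $u=fx_0$ with $f\in F$ (possible since $U\subseteq Fx_0$); given any $x\in X$, transitivity $Tx_0=X$ yields $s\in T$ with $sx_0=x$, and then $t:=sf^{-1}$ satisfies $tu=sf^{-1}fx_0=sx_0=x$, whence $x\in tU$.

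Since $X$ is compact, I would extract a finite subcover $X=\bigcup_{i=1}^n t_iU$. From $U\subseteq Fx_0$ we get $t_iU\subseteq t_i(Fx_0)=(t_iF)x_0$, so $X=\bigl(\bigcup_{i=1}^n t_iF\bigr)x_0=K_0x_0$, where $K_0:=\bigcup_{i=1}^n t_iF$ is compact in $T$ (a finite union of left-translates of the compact set $F$, each translate being compact as the image of $F$ under the homeomorphism of $T$ given by left multiplication). This is the heart of the argument, and it is exactly where the nonempty-interior hypothesis is indispensable: without it the orbit map need not send any compact subset of $T$ onto $X$.

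Finally I would deduce syndeticity. Fix any $t\in T$. Then $tx_0\in X=K_0x_0$, so there is $k\in K_0$ with $kx_0=tx_0$; since $S_{x_0}$ is a subgroup of $T$, the relation $k^{-1}tx_0=x_0$ gives $k^{-1}t\in S_{x_0}$. Put $K:=K_0^{-1}$, which is compact because inversion is a homeomorphism of $T$. Then $k^{-1}\in K$ and $k^{-1}t\in Kt\cap S_{x_0}$, so $Kt\cap S_{x_0}\neq\emptyset$ for every $t\in T$; that is, $S_{x_0}$ is syndetic and hence $x_0$ is periodic. I expect the only genuinely delicate point to be the covering/compactness step producing $X=K_0x_0$; once that is in hand, the passage to a syndetic stabilizer is a short group-theoretic manipulation that uses inverses and the subgroup property of $S_{x_0}$.
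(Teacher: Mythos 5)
Your proof is correct and follows essentially the same route as the paper's: cover $X$ by group-translates of the nonempty open set inside $Fx_0$, extract a finite subcover by compactness of $X$, and convert the resulting finite covering into syndeticity of the stabilizer $S_{x_0}$. In fact your write-up is slightly more careful than the paper's, which tacitly takes $U$ to be a neighborhood of $x_0$ itself inside $Fx_0$ (the hypothesis only gives $\textit{Int}_X(Fx_0)\not=\emptyset$) and leaves the final compact witness for syndeticity implicit, whereas you work with an arbitrary interior point $u=fx_0$ and exhibit the compact set $K_0^{-1}$ explicitly.
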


\begin{proof}
Let $U$ be any neighborhood of $x_0$ with $U\subset Fx_0$. Since $Tx_0=X$ and $X$ is compact, one can find a finite set $K^{-1}=\{t_0,t_1,\dotsc,t_k\}$ in $T$ such that
$X=\bigcup_{i=0}^kt_iU$.
Therefore, for any $t\in T$, $ktx_0\in U\subset Fx_0$ for some $k\in K$. This shows that $\{t\in T\,|\,tx_0=x_0\}$ is syndetic in $T$.
\end{proof}
\subsection{Multi-dimensional Li-Yorke chaos of weakly-mixing dynamics}\label{sec3.2}
Example~\ref{e1.3} is a completely Li-Yorke chaotic minimal weakly-mixing topological flow with non-abelain phase group $T$. Although a minimal weakly-mixing topological flow with an abelian phase group is never completely Li-Yorke chaotic, yet it is densely Li-Yorke chaotic.
In fact, any minimal weakly-mixing topological flow with abelian phase group $T$ is densely multi-dimensional Li-Yorke chaotic by the following Proposition~\ref{prop3.9}.

We will need a simple observation:
\begin{itemize}
\item If a non-singleton Polish space can support a topologically weakly-mixing semiflow, then this underlying space in question has no isolated points.
\end{itemize}
This fact may follow from the later Lemma~\ref{lem3.20}.

\begin{prop}\label{prop3.9}
Let $(T,X,\pi)$ be a minimal topologically weakly-mixing flow on a non-singleton compact metric space $X$ such that $T$ is an abelian topological group. Then $(T,X,\pi)$ is densely multi-dimensional Li-Yorke chaotic.
\end{prop}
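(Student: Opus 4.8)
The plan is to run the Kuratowski--Mycielski machine of Lemma~\ref{lem2.13} on the diagonal powers $X^k$, exactly as in the proof of Proposition~\ref{prop2.14}, but with the fixed periodic point $p$ there replaced by the whole diagonal $\varDelta_{X^k}$, proximality toward which is now supplied by weak mixing instead of by a stabilizer subgroup. Fix an arbitrary reference sequence $\mathcal{F}=\{F_n\}$ of compact subsets of $T$. For each $k\ge2$ I would introduce the \emph{diagonal-proximal} set
\begin{gather*}
P_k=\bigl\{(x_1,\dots,x_k)\in X^k\,\big|\,\exists\,t_n\in T\ \textrm{with}\ \textstyle\max_{i,j}d(t_nx_i,t_nx_j)\to0\bigr\}
\end{gather*}
and the \emph{$\mathcal{F}$-recurrent} set
\begin{gather*}
R_k=\bigl\{(x_1,\dots,x_k)\in X^k\,\big|\,\exists\,s_n\in T\setminus F_n\ \textrm{with}\ \textstyle\sum_i d(s_nx_i,x_i)\to0\bigr\}.
\end{gather*}
If both are dense $G_\delta$ in $X^k$, then $X^k\setminus(P_k\cap R_k)$ is an $F_\sigma$-set of the first category; note that a point of $P_k$ yields, by compactness of $X$, a common subsequential limit $p$ so that $t_n(x_1,\dots,x_k)\to(p,\dots,p)\in\varDelta_{X^k}$, while a point of $R_k$ yields $s_n\in T\setminus F_n$ with $s_n(x_1,\dots,x_k)\to(x_1,\dots,x_k)$ --- precisely the two requirements of Definition~\ref{def1.5}.

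To see $P_k$ is a dense $G_\delta$ I would use that, $T$ being abelian and $(T,X,\pi)$ weakly mixing, the paragraph following Lemma~\ref{lem3.2} makes every power $(T,X^k)$ itself topologically weakly-mixing, hence topologically transitive. Writing $P_k=\bigcap_m O_m$ with $O_m=\bigcup_{t\in T}\{(x_1,\dots,x_k):\max_{i,j}d(tx_i,tx_j)<1/m\}$ exhibits $P_k$ as $G_\delta$. For density of the open set $O_m$, given a basic box $V_1\times\dots\times V_k$ and an open $W\subseteq X$ of diameter $<1/m$, transitivity of $(T,X^k)$ gives $t$ with $(V_1\times\dots\times V_k)\cap t^{-1}(W\times\dots\times W)\neq\emptyset$; the resulting tuple lies in the box and its $t$-image has all coordinates in $W$, so $O_m$ meets every box and $P_k$ is dense $G_\delta$ by Baire.

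The density of $R_k$ is the main obstacle, and it is here that the $F_n$-constraint bites. Write $R_k=\bigcap_m O_m'$ with $O_m'=\bigcup_{s\in T\setminus F_m}\{(x_1,\dots,x_k):\sum_i d(sx_i,x_i)<1/m\}$, which is open, so $R_k$ is $G_\delta$; since the $F_m$ increase, a point of $\bigcap_m O_m'$ supplies $s_m\in T\setminus F_m$ with the desired convergence. The real issue is producing a return time \emph{outside} the compact set $F_m$. I would argue as follows. Each $\pi_t$ is a homeomorphism of $X^k$, so $N_T(X^k,V)=T$ is thick, and Corollary~\ref{cor3.3}(1) upgrades $(T,X^k)$ from weakly mixing to \emph{thickly} transitive; hence for a box $\prod_iV_i'$ with each $\textrm{diam}\,V_i'<1/(km)$ the set $N_T(\prod_iV_i',\prod_iV_i')$ is thick in $T$. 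By Lemma~\ref{lem3.7} $T$ is not compact, and then each co-compact set $T\setminus F_m$ is syndetic: choosing $g\notin F_mF_m^{-1}$ (possible as $F_mF_m^{-1}$ is compact) makes $\{e,g\}$ a witness for syndeticity. By the thick--syndetic duality recorded in $\S\ref{sec1.4}$, the thick set $N_T(\prod_iV_i',\prod_iV_i')$ must meet the syndetic set $T\setminus F_m$. Any such $s$ provides $z\in\prod_iV_i'$ with $sz\in\prod_iV_i'$ and $s\notin F_m$, whence $\sum_i d(sx_i,x_i)<1/m$; thus $O_m'$ meets every box and $R_k$ is dense $G_\delta$.

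Finally I would apply the dense form of the Kuratowski--Mycielski theorem (Mycielski's theorem, the strengthening of Lemma~\ref{lem2.13} available because $X$ is a perfect Polish space, $X$ having no isolated points by the observation recorded just before this proposition) to the countable family $\{X^k\setminus(P_k\cap R_k)\}_{k\ge2}$. This yields a dense set $\Theta\subseteq X$, a countable union of Cantor sets, independent in every $X^k\setminus(P_k\cap R_k)$; so each tuple of distinct points of $\Theta$ lies in $P_k\cap R_k$ and is simultaneously diagonal-proximal and $\mathcal{F}$-recurrent. Since $\mathcal{F}$ was arbitrary, $(T,X,\pi)$ is densely multi-dimensional Li-Yorke chaotic. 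The hard part is the third step --- securing recurrence times outside the prescribed compacta $F_n$ --- which leans on the noncompactness of $T$ (Lemma~\ref{lem3.7}) together with ``co-compact $\Rightarrow$ syndetic'' and ``thick meets every syndetic set''; weak mixing of all orders does the remaining essential work, with minimality ensuring the hypotheses of Corollary~\ref{cor3.3} and the perfectness of $X$ are comfortably met.
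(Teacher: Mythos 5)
Your proof is correct, and it takes a genuinely different route from the paper's at exactly the step where the paper leans on minimality. The paper also runs the Kuratowski--Mycielski machine (Lemma~\ref{lem2.13}) on the powers $X^k$, made transitive by Lemma~\ref{lem3.2} with $T$ non-compact by Lemma~\ref{lem3.7}; but its good set is $M_k$, the set of transitive points of $(T,X^k)$ with pairwise distinct coordinates (dense $G_\delta$ by Basic Fact~\ref{fac1}). At such points proximality to the diagonal is free (dense orbits approach $\varDelta_{X^k}$), and the recurrence times $s_n\notin F_n$ come from a dichotomy: either $\textit{Int}_{X^k}(T(x_1,\dotsc,x_k))=\emptyset$ and Lemma~\ref{lemII.12} gives recurrence, or the orbit is open (the group structure enters here), whence $Tx_1$ is open and invariant, minimality forces $Tx=X$ for all $x$, Fort's theorem \cite[Theorem~2.13]{Aus} forces equicontinuity, and this contradicts weak mixing on a non-singleton space. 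You instead prove density of the recurrence relation $R_k$ directly, from thick transitivity of $(T,X^k)$ (Corollary~\ref{cor3.3}-(1)) plus syndeticity of $T\setminus F_m$ in the non-compact group $T$ (witness $\{e,g\}$ with $g\notin F_mF_m^{-1}$) plus the thick-meets-syndetic duality of \S\ref{sec1.4}. This buys something real: minimality is never used, so your argument actually proves the stronger statement that \emph{every} (not necessarily minimal) topologically weakly-mixing flow with abelian group $T$ on a non-singleton compact metric space is densely multi-dimensional Li-Yorke chaotic, a case the paper covers only for discrete $T$ (Proposition~\ref{prop3.13}) or $T=\mathbb{R}_+$ (Proposition~\ref{prop3.10}); what the paper's route buys is brevity given its toolkit, at the price of the minimality hypothesis. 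Two further points in your favor: since your duality argument tests thickness only against the two-element set $\{e,g\}$, you need exactly the finite-set thickness that the filter-based proof of Corollary~\ref{cor3.3}-(1) genuinely delivers, so nothing delicate about infinite compact subsets of a non-discrete $T$ is invoked; and your appeal to the dense (Mycielski) form of Lemma~\ref{lem2.13} --- a dense countable union of Cantor sets independent in all the relations --- is more careful than the paper's own ``dense Cantor set,'' which cannot be meant literally, since a dense compact subset of $X$ would be all of $X$ and complete multi-dimensional chaos is impossible here.
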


\begin{proof}
Let $k\ge2$ be any given integer. Let $\mathcal{F}=\{F_n\}_1^\infty$ be any given increasing sequence of compact subsets of the abelian topological group $T$.

First by Lemma~\ref{lem3.2}, it follows that $(T,X^k)$ is topologically transitive. Then by Basic Fact~\ref{fac1} in $\S\ref{sec2.1}$, $\textit{Trans}(T,X^k)$ is dense $G_\delta$ in $X^k$. Write
\begin{gather*}
M_k=\{(x_1,\dotsc,x_k)\in\textit{Trans}(T,X^k)\,|\,x_i\not=x_j, 1\le i\not=j\le k\}.
\end{gather*}
Clearly, $M_k$ is also a dense $G_\delta$-subset of $X^k$.

Note that since $(T,X,\pi)$ is weakly mixing, then by Lemma~\ref{lem3.7} it follows that $T$ is not compact and so $F_n\not=T$ for all $n\ge1$.

Let $(x_1,\dotsc,x_k)\in M_k$ be any given. As in (\ref{prop2.14}.3) in the proof of Proposition~\ref{prop2.14}, we can choose a sequence $s_n\in T\setminus F_n, n=1,2,\dotsc$ with $s_n(x_1,\dotsc,x_k)\to(x_1,\dotsc,x_k)$ as $n\to\infty$. This is because otherwise, $T(x_1,\dotsc,x_k)$ is an open subset of $X^k$. Further $Tx_1$ is an open invariant subset of $X$. Since $(T,X,\pi)$ is minimal, hence $Tx_1=X$ and then $Tx=X\ \forall x\in X$. Then by Fort's theorem (cf.~\cite[Theorem~2.13]{Aus}), it follows that $(T,X,\pi)$ is equicontinuous. But since $(T,X,\pi)$ is weakly mixing, it has no non-trivial equicontinuous factor and thus $X$ is a singleton. This is a contradiction.

Finally by Lemma~\ref{lem2.13}, we can choose a dense Cantor set $\Theta$ in $X$ such that for any $k\ge2$ and any distinct $x_1,\dotsc,x_k\in\Theta$, there are $s_n\not\in F_n, t_n\in T$ and some $p\in X$ such that
\begin{gather*}
s_n(x_1,\dotsc,x_k)\to(x_1,\dotsc,x_k),\quad t_n(x_1,\dotsc,x_k)\to(p,\dotsc,p)\quad \textrm{as }n\to\infty.
\end{gather*}
Therefore, $(T,X,\pi)$ is densely multi-dimensional Li-Yorke chaotic relative to any increasing sequence of compact subsets of $T$.

The proof of Proposition~\ref{prop3.9} is thus completed.
\end{proof}

However, for $\mathbb{R}_+$-acting semiflow with no the minimality hypothesis, we can obtain the following, where $\mathbb{R}_+$ is equipped with the usual topology.

\begin{prop}\label{prop3.10}
Let $\pi\colon \mathbb{R}_+\times X\rightarrow X$ be a $C^0$-semiflow on a non-singleton Polish space $X$. If $(X,\mathbb{R}_+,\pi)$ is topologically weakly-mixing, then it is densely multi-dimensional Li-Yorke chaotic.
\end{prop}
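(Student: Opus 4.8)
The plan is to run the scheme of Proposition~\ref{prop2.14} and Proposition~\ref{prop3.9}, replacing the roles played there by a periodic point and by minimality with purely dynamical consequences of weak-mixing for the semiflow $\mathbb{R}_+\curvearrowright_\pi X$. Fix an integer $k\ge2$ and an arbitrary reference sequence $\mathcal{F}=\{F_n\}_{n=1}^\infty$ of compact subsets of $\mathbb{R}_+$; since each $F_n$ is compact it is bounded, so $\mathbb{R}_+\setminus F_n$ contains all sufficiently large times. First I would record the standing reductions: by Lemma~\ref{lem3.20} weak-mixing forces sensitivity, whence $X$, and thus $X^k$, has no isolated points; by the necessity half of Lemma~\ref{lem3.2} the diagonal $k$-fold product $(\mathbb{R}_+,X^k)$ is again topologically transitive, so by Basic Fact~\ref{fac1} the set $\textit{Trans}(\mathbb{R}_+,X^k)$ is a dense $G_\delta$ in $X^k$, and hence so is
\[
M_k=\{(x_1,\dotsc,x_k)\in\textit{Trans}(\mathbb{R}_+,X^k)\,|\,x_i\ne x_j\text{ for }i\ne j\}.
\]
Moreover, since every orbit $\mathbb{R}_+z$ is a continuous image of the connected set $[0,\infty)$, topological transitivity of $X^k$ makes $X^k$, and therefore $X$, connected.

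Next I would show that \emph{every} $z\in M_k$ is simultaneously proximal to the diagonal and recurrent, so that no separate genericity argument for these two relations is needed. For proximality, fix once and for all a point $p\in X$; as $\mathbb{R}_+z$ is dense in $X^k$ it meets every neighbourhood of $(p,\dotsc,p)$, which yields a sequence $t_n$ with $t_n(x_1,\dotsc,x_k)\to(p,\dotsc,p)\in\varDelta_{X^k}$. For recurrence I would invoke Lemma~\ref{lemII.12}: it suffices to prove that $\textit{Int}_{X^k}(\mathbb{R}_+z)=\emptyset$, for then $z$ is a recurrent point, and feeding the neighbourhoods $B(z,1/n)$ and the compact sets $F_n$ into Definition~\ref{def2.11} one extracts $s_n\in\mathbb{R}_+\setminus F_n$ with $s_n(x_1,\dotsc,x_k)\to(x_1,\dotsc,x_k)$.

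The heart of the matter, and the step I expect to be the main obstacle, is the empty-interior claim $\textit{Int}_{X^k}(\mathbb{R}_+z)=\emptyset$ for transitive $z$. I would argue by contradiction, and here the $C^0$-structure is essential. If $\pi_{t_1}z=\pi_{t_2}z$ for $t_1<t_2$, then $\pi_{t_2-t_1}$ fixes $\pi_{t_1}z$, so a non-periodic orbit has an \emph{injective} time parametrisation; consequently each compact segment $\pi([0,M],z)$ is a continuous injective image of $[0,M]$, hence, being compact-to-Hausdorff, a topological arc. Writing $\mathbb{R}_+z=\bigcup_M\pi([0,M],z)$ as a countable union of such arcs and using that $X^k$ is a Baire space, a nonempty interior would force some arc $\pi([0,M],z)$ to contain a nonempty open $V\subseteq X^k$; passing to a single interval component $V_0$ of $V$ (which is open in $X^k$ since $V_0=U'\cap V$ for a suitable open $U'$, and is homeomorphic to an open interval) its boundary, computed inside the arc, consists of at most two points. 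Since $\overline{V_0}\subseteq\pi([0,M],z)\subsetneq X^k$ the set $\textit{Int}_{X^k}(X^k\setminus V_0)$ is nonempty, so $X^k\setminus\partial V_0$ would be disconnected by a finite set; this contradicts the fact that the $k$-fold product ($k\ge2$) of a connected space with more than one point cannot be separated by any finite set. The remaining periodic case is disposed of separately: a periodic transitive orbit is compact, so $X^k=\overline{\mathbb{R}_+z}=\mathbb{R}_+z$ would be a single periodic orbit, i.e. a circle or a point, neither of which carries a weakly-mixing semiflow, again a contradiction. This establishes the claim, and the delicate ingredient throughout is precisely the separation property of products, which is why the argument genuinely requires $k\ge2$.

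With proximality and recurrence in hand for all of $M_k$, I would close exactly as in Proposition~\ref{prop3.9}. Put $R_k=X^k\setminus M_k$; since $M_k$ is a dense $G_\delta$, each $R_k$ is an $F_\sigma$-set of the first category in the complete metric space $X^k$, and $X$ has no isolated points. The Kuratowski--Mycielski theorem (Lemma~\ref{lem2.13}, second part) then provides a dense Cantor set $\Theta\subseteq X$ lying in $\bigcap_k J(R_k)$, so that every finite tuple of distinct points of $\Theta$ belongs to the corresponding $M_k$ and therefore enjoys both convergences above with $s_n\notin F_n$. As $\mathcal{F}$ was arbitrary, $\Theta$ witnesses that $(\mathbb{R}_+,X,\pi)$ is densely multi-dimensional Li-Yorke chaotic.
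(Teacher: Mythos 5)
Your proposal is correct and follows the paper's overall skeleton---Lemma~\ref{lem3.2} to get transitivity of $(\mathbb{R}_+,X^k)$, the observation that every transitive $k$-tuple is simultaneously proximal to the diagonal and recurrent in the sense of Definition~\ref{def2.11}, and Lemma~\ref{lem2.13} to produce the dense chaotic set---but you prove the crucial empty-interior claim by a genuinely different argument. The paper argues via topological dimension: injectivity of $t\mapsto tx_1$ (its ``lollipop'' observation) makes each compact piece $F_n(x_1,\dotsc,x_k)$ homeomorphic to $F_n\subseteq\mathbb{R}$, hence of dimension at most $1$, while a set with nonempty interior in $X^k$ has dimension at least $k\ge2$ (because every nonempty open subset of $X$ contains an arc of the transitive orbit, so every nonempty open subset of $X^k$ contains a $k$-cell). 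You instead reduce, by Baire category, to an arc $\pi([0,M],z)$ containing a nonempty open set, and get a contradiction from connectivity: the boundary of an interval component is a set of at most two points disconnecting $X^k$, whereas a $k$-fold product ($k\ge2$) of a nondegenerate connected space cannot be disconnected by a finite set. Your route buys freedom from dimension theory (a real gain, since lower bounds for dimensions of products are delicate in general spaces; the paper's inequality $\dim F_n(x_1,\dotsc,x_k)\ge k$ ultimately rests on the arcs-in-open-sets picture anyway), at the cost of the separation fact, which you assert without proof. It is true and elementary, but it should be recorded: writing $A_i=\pi_i(F)$ for the coordinate projections of the finite set $F$, the union $C$ of all slices $\{x\in X^k\,|\,x_i=u\}$ with $u\notin A_i$ is connected and disjoint from $F$, and $X^k\setminus C\subseteq A_1\times\dotsm\times A_k$ is finite; so in any clopen partition of $X^k\setminus F$ the side missing $C$ is a finite nonempty open set, impossible since $X^k$ has no isolated points.

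Two small repairs. First, your non-injective case is glossed: if $\pi_{t_1}z=\pi_{t_2}z$ with $0<t_1<t_2$, the compact set $X^k=\pi([0,t_2],z)$ may be a point, an arc, a circle, or a ``lollipop'', not only ``a circle or a point''; but every nondegenerate possibility is disconnected by removing at most two points, so your own separation fact disposes of all of them uniformly, and the appeal to ``neither carries a weakly-mixing semiflow'' can be dropped. Second, in the last step the chaotic set furnished by Lemma~\ref{lem2.13} is a dense union of Cantor sets rather than a single dense Cantor set (a Cantor set is compact, so it cannot be dense in $X$ unless it equals $X$); this imprecision is inherited from the paper's own wording in Proposition~\ref{prop3.9} and is harmless. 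Note also that the paper's proof additionally arranges $t_n\to\infty$ in the proximality clause; if desired, this follows from your empty-interior claim as well, since $B\bigl((p,\dotsc,p),1/n\bigr)\setminus\pi([0,n],z)$ is nonempty and open, so the dense orbit visits it at some time $t_n>n$.
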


\begin{proof}
For any integer $k\ge2$, by Lemma~\ref{lem3.2}, the standard induced $k$-fold product semiflow $\pi\colon \mathbb{R}_+\times X^k\rightarrow X^k$ is topologically transitive, where $X^k=X\times \dotsm\times X$. Then,
\begin{gather*}
P_k=\left\{(x_1,\dotsc,x_k)\in X^k\,|\,\exists t_n\to\infty\textrm{ s.t. }t_n(x_1,\dotsc,x_k)\to(p,\dotsc,p)\in\varDelta_{X^k}\right\}
\end{gather*}
is dense $G_\delta$ in $X^k$, for $X^k$ is a Polish space and $\textit{Trans}(\mathbb{R}_+,X^k)\subseteq P_k$.

Note that given any transitive point $x_0\in X$, $sx_0\not=tx_0$ for any $0\le s<t<\infty$; since otherwise $X$ is homeomorphic to the following space

\setlength{\unitlength}{1cm}
\begin{picture}(5,1.7)
\put(0.9,0.15){$x_0$}
\put(1,0.5){\circle*{0.07}}
\put(1,0.5){\vector(1,0){1}}
\put(2,0.5){\line(1,0){1}}
\put(3,0.5){\circle*{0.07}}
\put(2.8,0.15){$sx_0$}
\put(3,1){\circle{1}}
\end{picture}

\noindent
which contradicts the topological transitivity of $\mathbb{R}_+\curvearrowright_\pi X$. Therefore, the topological dimension of any open subset $U$ of the space $X$ is such that $\dim U\ge1$.

Next, let $\mathcal{F}=\{F_n\,|\, n=1,2,\dotsc\}$ be any given increasing sequence of compact subsets of $\mathbb{R}_+$. Let $(x_1,\dotsc,x_k)\in\textit{Trans}(\mathbb{R}_+,X^k)$ be arbitrarily given. We can claim that each $F_n(x_1,\dotsc,x_k)$ contains no nonempty interior; this is because otherwise, some $F_n(x_1,\dotsc,x_k)$ has nonempty interior and then by $F_n(x_1,\dotsc,x_k)\cong F_nx_1$ via the 1-to-1 map $t(x_1,\dotsc,x_k)\mapsto tx_1\ \forall t\in F_n$ it follows that
\begin{equation*}
1\ge\dim F_nx_1=\dim F_n(x_1,\dotsc,x_k)\ge k
\end{equation*}
which is a contradiction to the
weak-mixing. Therefore, $(x_1,\dotsc,x_k)$ is a recurrent point for $(\mathbb{R}_+,X^k)$ in the sense of Definition~\ref{def2.11}. Whence we can choose $s_n\not\in F_n$ with $s_n\to\infty$ such that $s_n(x_1,\dotsc,x_k)\to(x_1,\dotsc,x_k)$.

Let $R_k=X^k\setminus\textit{Trans}(\mathbb{R}_+,X^k)\subset X^k$ for all $k\ge2$. Then by Lemma~\ref{lem2.13}, we can choose a dense multi-dimensional Li-Yorke chaotic set.
\end{proof}

If $s_n\not\in F_n$ is relaxed by $s_n\not\to e$, then we can prove ``weakly-mixing $\Rightarrow$ chaos'' by using Lemma~\ref{lem2.13}. It would be interesting to known whether or not the condition `$\textit{Int}_X(Tx_0)=\emptyset$' is superfluous in the following proposition.

\begin{prop}\label{prop3.11}
Let $(T,X,\pi)$ be a topologically weakly-mixing flow on a non-singleton Polish space $X$ with $T$ a topological group. If there exists some $x_0\in\textit{Trans}(T,X)$ with $\textit{Int}_X(Fx_0)=\emptyset$ for any compact subset $F$ of $T$, then $(T,X,\pi)$ is densely 2D Li-Yorke chaotic.
\end{prop}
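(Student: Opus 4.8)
The plan is to run the Kuratowski--Mycielski machine of Lemma~\ref{lem2.13} on the product flow $(T,X\times X)$, exactly as in the proofs of Propositions~\ref{prop3.9} and \ref{prop3.10}, but replacing the minimality (resp.\ the dimension count) used there by the hypothesis on $x_0$. First I would record the preliminaries: since $(T,X,\pi)$ is weakly mixing on a non-singleton Polish space, Lemma~\ref{lem3.7} gives that $T$ is non-compact, so $T\setminus K\neq\emptyset$ for every compact $K$; moreover $X$ has no isolated points (the observation preceding Proposition~\ref{prop3.9}). By the very definition of weak mixing the product flow $(T,X\times X)$ is topologically transitive, and as $X\times X$ is Polish, Basic Fact~\ref{fac1} yields that $G:=\textit{Trans}(T,X\times X)$ is a dense $G_\delta$-subset of $X\times X$; its complement $R:=(X\times X)\setminus G$ is therefore an $F_\sigma$-set of the first category. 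The target set $\Theta$ will be a dense Cantor set produced by the second part of Lemma~\ref{lem2.13} with $W=X$ and this single relation $R\subset X^2$, so that every pair of distinct points of $\Theta$ is a transitive point of the product flow (the swap homeomorphism $(a,b)\mapsto(b,a)$ commutes with the action, hence preserves $G$, so independence in $R$ covers both orderings).

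It then remains to check that each transitive pair $(w_1,w_2)\in G$ is simultaneously proximal to the diagonal at a common point and recurrent off the $F_n$. Proximality is immediate: density of $T(w_1,w_2)$ means $(p,p)\in\overline{T(w_1,w_2)}$ for every $p\in X$, so a sequence $t_n$ with $t_n(w_1,w_2)\to(p,p)\in\varDelta_{X\times X}$ exists. For the separation condition I would show that $(w_1,w_2)$ is a recurrent point of $(T,X\times X)$ in the sense of Definition~\ref{def2.11}; granting this, for each $n$ one finds $s_n\in T\setminus F_n$ with $s_n(w_1,w_2)$ arbitrarily close to $(w_1,w_2)$, i.e.\ $s_n(w_1,w_2)\to(w_1,w_2)$ with $s_n\notin F_n$.

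The heart of the argument --- and the only place the hypothesis on $x_0$ enters --- is the claim that
\begin{equation*}
\textit{Int}_{X\times X}\bigl(F(w_1,w_2)\bigr)=\emptyset\quad\textrm{for every compact }F\subseteq T\textrm{ and every }(w_1,w_2)\in G.
\end{equation*}
Since then $F(w_1,w_2)$ is compact, hence closed and nowhere dense, the argument of Lemma~\ref{lemII.12} applies verbatim: for any neighbourhood $U$ of $(w_1,w_2)$ and any compact $K$ the set $U\setminus K(w_1,w_2)$ is nonempty open, so the dense orbit meets it and yields $t\in T\setminus K$ with $t(w_1,w_2)\in U$, i.e.\ recurrence. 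To prove the claim I would argue by contradiction and transport the putative interior back to $x_0$ using the group structure. If some $F(w_1,w_2)$ had nonempty interior, projecting to the first coordinate gives $\textit{Int}_X(Fw_1)\neq\emptyset$; choose a nonempty open $U\subseteq Fw_1$. The projection $w_1$ is transitive in $(T,X)$ (the image of a dense orbit under the continuous projection is dense), and so is $x_0$, whence $Tx_0$ meets $U$: there is $h\in T$ with $hx_0\in U\subseteq Fw_1$, say $hx_0=fw_1$ with $f\in F$. As $T$ is a group this gives $w_1=f^{-1}hx_0=:g x_0$, so $Fw_1=(Fg)x_0$ with $Fg$ compact, and therefore $\textit{Int}_X\bigl((Fg)x_0\bigr)\neq\emptyset$ --- contradicting the standing hypothesis $\textit{Int}_X(Fx_0)=\emptyset$ for every compact $F$.

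I expect this last transport step to be the main obstacle, since it is exactly what converts the information carried by the single distinguished point $x_0$ into a property shared by \emph{all} transitive pairs; it is also where the group (rather than merely semigroup) structure is essential, through the inverse $f^{-1}$ and the compactness of $Fg$. Once the claim is in hand everything assembles as above: $\Theta$ is an uncountable dense Cantor set, every pair of its distinct points lies in $G$ and is hence proximal to the diagonal and recurrent off $\{F_n\}$, and since the reference sequence $\mathcal{F}=\{F_n\}$ was arbitrary, $(T,X,\pi)$ is densely $2$D Li--Yorke chaotic.
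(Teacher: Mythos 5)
Your proposal is correct and follows essentially the same route as the paper's own proof: dense $G_\delta$ of transitive pairs in $X\times X$, the claim that $\textit{Int}_{X\times X}\bigl(F(w_1,w_2)\bigr)=\emptyset$ for every compact $F$ (your $f^{-1}h$ transport is exactly the detail behind the paper's terse ``so $\textit{Int}_X(Ftx_0)\neq\emptyset$ for some $t\in T$''), recurrence via the argument of Lemma~\ref{lemII.12}, proximality from density of the orbit, and the Kuratowski--Mycielski theorem (Lemma~\ref{lem2.13}) to produce the dense uncountable chaotic set. The only difference is that you make the group-theoretic transport step and the applicability of Lemma~\ref{lemII.12} under the ``every compact $F$'' hypothesis explicit, which the paper leaves implicit.
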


\begin{proof}
First from the fact that $(T,X\times X)$ is topologically transitive, it follows that $\textit{Trans}(T,X\times X)$ is dense $G_\delta$ in the Polish space $X\times X$ by Basic Fact~\ref{fac1} in $\S\ref{sec2.1}$.

Let $(x,y)\in\textit{Trans}(T,X\times X)$ be any given; then we can claim $\textit{Int}_{X\times X}(F(x,y))=\emptyset$ for any compact subset $F$ of $T$. For, otherwise, $\textit{Int}_X(Fx)\not=\emptyset$ and so $\textit{Int}_X(Ftx_0)\not=\emptyset$ for some $t\in T$ contradicting the hypothesis.

Then there is a sequence $\{t_n\}$ in $T$ with
$t_n(x,y)\to(p,p)\in\varDelta_{X\times X}$.
On the other hand, for any reference sequence $\mathcal{F}=\{F_n\}_{n=1}^\infty$, by Lemma~\ref{lemII.12}, there is a sequence $\{s_n\}$ in $T$ with $s_n\not\in F_n$ such that
$s_n(x,y)\to(x,y)$.
That is, $(x,y)$ is a 2D Li-Yorke chaotic pair. Then by Lemma~\ref{lem2.13}, one can find a dense subset of $X$ which is a Li-Yorke chaotic set relative to $\mathcal{F}$. This proves Proposition~\ref{prop3.11}.
\end{proof}

Although the weakly-mixing is independent of the topology of the phase semigroup $T$, yet our Li-Yorke chaos and the multi-dimensional chaos both depend heavily on it. Under the discrete topology, many things become relatively simple.

\begin{prop}\label{prop3.12}
Let $(T,X,\pi)$ be a topologically weakly-mixing semiflow on a non-singleton Polish space $X$ with discrete phase semigroup $T$. Then it is densely 2D Li-Yorke chaotic.
\end{prop}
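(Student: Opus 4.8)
The plan is to push everything onto the $2$-fold product semiflow $(T,X\times X)$, which is topologically transitive precisely because $(T,X,\pi)$ is weakly-mixing; crucially, for the $2$D case there is \emph{no} iteration of Lemma~\ref{lem3.2}, and hence no need for $T$ to be abelian, which is exactly why the hypothesis here may be a general discrete semigroup. First I would record that $X$ has no isolated points by the observation preceding Proposition~\ref{prop3.9} (itself a consequence of the later Lemma~\ref{lem3.20}), so that $X$, and therefore $X\times X$, is a perfect Polish space. Since $X\times X$ is Polish and $(T,X\times X)$ is topologically transitive, Basic Fact~\ref{fac1} yields that $\textit{Trans}(T,X\times X)$ is a dense $G_\delta$ subset of $X\times X$.

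The heart of the argument is to show that every $(x,y)\in\textit{Trans}(T,X\times X)$ with $x\neq y$ is a $2$D Li-Yorke pair relative to an arbitrary reference sequence $\mathcal{F}=\{F_n\}_{n=1}^\infty$. For proximality, fix any $p\in X$; since $T(x,y)$ is dense in the metrizable space $X\times X$, there is a sequence $\{t_n\}$ in $T$ with $t_n(x,y)\to(p,p)$. For the separating direction I would first observe that for every neighborhood $U$ of $(x,y)$ the set $\{t\in T\,|\,t(x,y)\in U\}$ is infinite: indeed $T(x,y)\cap U$ is dense in $U$, and $U$ is infinite because $X\times X$ has no isolated points, so a finite set cannot be dense in $U$; hence infinitely many distinct $t$ carry $(x,y)$ into $U$. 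Because $T$ is discrete, each $F_n$ is finite, so taking $U$ to be the $1/n$-ball about $(x,y)$ I can select $s_n\in\{t\,|\,t(x,y)\in U\}\setminus F_n$, which gives
\begin{gather*}
t_n(x,y)\to(p,p)\in\varDelta_{X\times X}\quad\textrm{and}\quad s_n(x,y)\to(x,y),\qquad s_n\notin F_n.
\end{gather*}
(Equivalently, this says $(x,y)$ is a recurrent point in the sense of Definition~\ref{def2.11}, matching Lemma~\ref{lemII.12}.) Note that the separation holds for \emph{every} reference sequence simultaneously, so there is no real dependence of the chaotic pairs on $\mathcal{F}$.

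Finally I would upgrade these pairs to a dense uncountable chaotic set by the Kuratowski-Mycielski machinery. Put $W=X$ with a complete compatible metric and set $R=X\times X\setminus\textit{Trans}(T,X\times X)$; since $\textit{Trans}(T,X\times X)$ is dense $G_\delta$, the relation $R$ is an $F_\sigma$-set of the first category. Applying Lemma~\ref{lem2.13} with $n_k=2$ and $R_k=R$, and using that $X$ has no isolated points, the family $J(R)$ of compact sets independent in $R$ is a dense $G_\delta$ in $\mathscr{K}(X)$ consisting of Cantor sets. Since membership in $J(R)$ forces every ordered pair of distinct points to lie in $\textit{Trans}(T,X\times X)$, any two distinct points of such a set form a $2$D Li-Yorke pair. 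Extracting from $J(R)$ a subset $\Theta$ that is dense in $X$ then produces a dense $2$D Li-Yorke chaotic set relative to $\mathcal{F}$.

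I expect the genuine obstacle to be this last extraction: Lemma~\ref{lem2.13} naturally delivers \emph{compact} independent sets, which cannot be dense when $X$ is non-compact, so one cannot simply quote density of $J(R)$ in $\mathscr{K}(X)$. The fix is to assemble $\Theta$ as a countable union of independent Cantor sets concentrated in the members of a countable base of $X$, and the delicate point is to arrange this so that the \emph{whole} union remains independent in $R$ rather than each piece separately; everything else (reduction to the product, the proximal limit, and the infinitely-many-returns giving the $F_n$-avoiding separating sequence) is routine once $X$ is known to be perfect.
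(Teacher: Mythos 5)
Your proposal is correct and is essentially the paper's own argument: the paper's entire proof of Proposition~\ref{prop3.12} consists of the single remark that every transitive point of $(T,X\times X)$ is recurrent in the sense of Definition~\ref{def2.11} under the discrete topology of $T$ (exactly your finiteness-of-$F_n$ plus perfectness-of-$X\times X$ observation), after which it declares the remaining details---proximality from density of the orbit and the Kuratowski--Mycielski extraction---trivial and omits them. The density obstacle you flag at the end is genuine but standard: it is resolved precisely by the dense-Mycielski-set (countable union of independent Cantor sets over a countable base) construction you sketch, a point the paper itself glosses over throughout (elsewhere, e.g.\ in Proposition~\ref{prop3.9}, it even asks for a ``dense Cantor set'' in $X$, which is impossible since a compact dense subset of a Hausdorff space must be the whole space).
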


\begin{proof}
We only need to note that every transitive point of $(T,X\times X)$ is recurrent in the sense of Definition~\ref{def2.11} under the discrete topology of $T$. Then the rest arguments are trivial and we thus omit the details.
\end{proof}
If $T$ is abelian, then in view of Lemma~\ref{lem3.2}, similar to Proposition~\ref{prop3.12}, we can obtain the following multi-dimensional chaos. Its $T=\mathbb{Z}_+$ case is \cite[Corollary~3.2-(5)]{AGHSY}.

\begin{prop}\label{prop3.13}
Let $(T,X,\pi)$ be a topologically weakly-mixing semiflow on a non-singleton Polish space $X$ with $T$ an abelian discrete semigroup. Then it is densely multi-dimensional Li-Yorke chaotic.
\end{prop}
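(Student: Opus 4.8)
The plan is to follow the template of Proposition~\ref{prop3.10}, replacing its topological-dimension argument (which was special to $\mathbb{R}_+$) by the elementary fact that, under the discrete topology, every compact subset of $T$ is finite. Fix an integer $k\ge2$ and an arbitrary reference sequence $\mathcal{F}=\{F_n\}_{n=1}^\infty$; each $F_n$ is then a finite set, and since $(T,X,\pi)$ is weakly mixing with $T$ abelian, Lemma~\ref{lem3.7} forces $T$ to be non-compact, hence infinite, so that $T\setminus F_n\neq\emptyset$ for every $n$. Because $T$ is abelian, Lemma~\ref{lem3.2} (and the filter argument following it) makes the diagonal $k$-fold product $(T,X^k)$ topologically transitive, whence Basic Fact~\ref{fac1} gives that $\textit{Trans}(T,X^k)$ is a dense $G_\delta$-set in the Polish space $X^k$; put $R_k=X^k\setminus\textit{Trans}(T,X^k)$, an $F_\sigma$-set of the first category. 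Finally, by the observation preceding Proposition~\ref{prop3.9}, $X$ --- and hence $X^k$ --- has no isolated points.

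Next I would verify that every $(x_1,\dotsc,x_k)\in\textit{Trans}(T,X^k)$ meets both requirements of Definition~\ref{def1.5} relative to this $\mathcal{F}$. For the proximal condition, the orbit of $(x_1,\dotsc,x_k)$ is dense in $X^k$ and $\varDelta_{X^k}$ lies in its closure, so I can select $t_n\in T$ with $t_n(x_1,\dotsc,x_k)\to(p,\dotsc,p)\in\varDelta_{X^k}$. For the separated condition, fix $n$: since $F_n$ is finite, $F_n(x_1,\dotsc,x_k)$ is a finite set, so perfectness of $X^k$ makes $B_{1/n}(x_1,\dotsc,x_k)\setminus F_n(x_1,\dotsc,x_k)$ a nonempty open set, and density of the orbit yields $s_n\in T$ with $s_n(x_1,\dotsc,x_k)$ lying inside it; thus $s_n\notin F_n$ and $s_n(x_1,\dotsc,x_k)\to(x_1,\dotsc,x_k)$. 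This is exactly the discrete-topology form of the recurrence in Definition~\ref{def2.11} (cf.\ Lemma~\ref{lemII.12} and the proof of Proposition~\ref{prop3.12}).

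To obtain a single chaotic set valid for all $k$ at once, I would apply the Kuratowski--Mycielski theorem (Lemma~\ref{lem2.13}) with $W=X$ to the sequence $R_2,R_3,\dotsc$ of first-category $F_\sigma$-sets. Since $X$ is a perfect Polish space, this produces a dense subset $\Theta\subseteq X$ (a dense union of Cantor sets) contained in $\bigcap_{k\ge2}J(R_k)$. By independence, any distinct points $x_1,\dotsc,x_k\in\Theta$ give $(x_1,\dotsc,x_k)\in X^k\setminus R_k=\textit{Trans}(T,X^k)$, so the tuple inherits both conditions of the previous paragraph. As $\mathcal{F}$ was arbitrary, $(T,X,\pi)$ is densely multi-dimensional Li-Yorke chaotic.

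I expect the only genuinely delicate point to be the passage from ``independent Cantor set'' to a \emph{dense} chaotic set when $X$ fails to be compact: a single compact $\Theta$ cannot be dense in a non-compact $X$, so one must invoke the strong (dense) form of Mycielski's construction, which is available precisely because $X$ is perfect. Everything else is bookkeeping --- the abelian hypothesis enters only through Lemma~\ref{lem3.2}, to upgrade weak mixing to $k$-fold transitivity (the sole new ingredient beyond the two-dimensional Proposition~\ref{prop3.12}), and discreteness enters only to render the compact subsets of $T$ finite.
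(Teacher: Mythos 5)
Your proposal is correct and follows essentially the same route as the paper: the paper's own (sketched) proof of Proposition~\ref{prop3.13} consists of invoking Lemma~\ref{lem3.2} to get transitivity of every $k$-fold product $(T,X^k)$, noting (as in the proof of Proposition~\ref{prop3.12}) that under the discrete topology every transitive point of $(T,X^k)$ is recurrent in the sense of Definition~\ref{def2.11}, and then applying Lemma~\ref{lem2.13} exactly as you do. The details you supply---compactness in a discrete semigroup means finiteness, perfectness of $X^k$ so that deleting the finite set $F_n(x_1,\dotsc,x_k)$ leaves a nonempty open set, orbit density giving the approach to the diagonal, and the dense (countable-union-of-Cantor-sets) form of the Mycielski construction---are precisely the steps the paper declares trivial and omits.
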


If the topology of $T$ is not discrete, the conclusion of Propositions~\ref{prop3.12} and \ref{prop3.13} remains open. We are going to consider topologically weakly-mixing systems with non-discrete abelian phase semigroups in $\S\ref{sec3.3}$.
\subsection{Li-Yorke chaotic sets of weakly-mixing dynamics}\label{sec3.3}
\subsubsection{Proximality}
Let $\mathcal{P}(T,X)$ be the set of all proximal pairs $(x,y)$ of a topological semiflow $(T,X,\pi)$. The following is a generalization of \cite[Theorem~9.12]{Fur} and \cite[Theorem~3.8]{AK} from a cascade on a compact metric space to a general semiflow.

\begin{prop}\label{prop3.14}
If $(T,X,\pi)$ is a thickly transitive topological semiflow on a non-singleton locally compact metric space $X$; then for any almost periodic point $x\in X$, the proximal cell
\begin{gather*}
\mathcal{P}_x(T,X)=\{y\in X\,|\,(x,y)\in\mathcal{P}(T,X)\}
\end{gather*}
is a dense $G_\delta$-set of $X$ and so no point of $X$ is distal for $(T,X,\pi)$.
\end{prop}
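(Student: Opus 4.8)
The plan is to realize the proximal cell as a countable intersection of open sets, so that the $G_\delta$ assertion is automatic, and then to reduce the whole problem to proving that each of these open sets is dense. Writing $d$ for the metric and $B(z,r)$ for the open $r$-ball around $z$, I would set
\[
O_n=\{\,y\in X:\, d(tx,ty)<1/n\text{ for some }t\in T\,\}=\bigcup_{t\in T}\{\,y\in X:\, d(tx,ty)<1/n\,\}.
\]
For each fixed $t$ the map $y\mapsto d(tx,ty)$ is continuous, so $O_n$ is open, and clearly $\mathcal{P}_x(T,X)=\bigcap_{n\ge1}O_n$ is a $G_\delta$-set. Thus the entire content is to show each $O_n$ is dense; the density of the intersection then follows from the Baire category theorem, which applies here precisely because a locally compact metric space is a Baire space. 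This is where the local-compactness hypothesis is consumed.

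The heart of the argument is the density of $O_n$, and I would obtain it by pitting syndeticity against thickness. Fix a nonempty open $W\subseteq X$ and put $V=B(x,1/(2n))$, a neighbourhood of $x$. Since $x$ is almost periodic, $N_T(x,V)$ is syndetic in $T$; since $(T,X,\pi)$ is thickly transitive, $N_T(W,V)$ is thick in $T$. By the characterization recorded in $\S\ref{sec1.4}$ that a set is syndetic if and only if it meets every thick set, I may choose $t\in N_T(x,V)\cap N_T(W,V)$. Then $d(tx,x)<1/(2n)$, and there is some $y\in W$ with $ty\in V$, i.e.\ $d(ty,x)<1/(2n)$; the triangle inequality gives $d(tx,ty)<1/n$, so $y\in W\cap O_n$. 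As $W$ was an arbitrary nonempty open set, $O_n$ is dense, and by Baire the cell $\mathcal{P}_x(T,X)$ is a dense $G_\delta$-set of $X$.

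For the assertion that no point is distal, the almost periodic case is immediate: since $\mathcal{P}_x(T,X)$ is dense in the non-singleton metric space $X$, it cannot equal $\{x\}$ (a dense subset of a non-singleton $T_1$ space has at least two points), so there is $y\ne x$ proximal to $x$ and $x$ is not distal. To upgrade this to \emph{all} points I would invoke the classical fact that a distal point is necessarily almost periodic: a hypothetical distal point would then be almost periodic, hence by the above not distal, a contradiction. I expect this last step to be the main obstacle. The standard proof of ``distal $\Rightarrow$ almost periodic'' runs through minimal idempotents in the enveloping semigroup and is tailored to \emph{compact} phase spaces, whereas here $X$ is only locally compact and orbit closures need not be compact; transplanting it therefore requires care, and the cleanest route is to work inside a compact neighbourhood furnished by local compactness so that the relevant orbit data lives in a compact set on which the enveloping-semigroup dichotomy is available.
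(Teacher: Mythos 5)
Your proof of the dense-$G_\delta$ assertion is correct, and it packages the argument differently from the paper. The paper uses the same $G_\delta$ representation $\mathcal{P}_x(T,X)=\bigcap_{n}\bigcup_{t\in T}t^{-1}B_{1/n}(tx)$, but then proves density of $\mathcal{P}_x(T,X)$ itself in one stroke by following Furstenberg's nested construction (it cites the proof of \cite[Theorem~9.12]{Fur} for the details): inside a given open set $U$ one builds a decreasing chain of nonempty open sets, at each stage intersecting the syndetic set $N_T(x,V_n)$ with the thick set $N_T(\,\cdot\,,V_n)$ for a shrinking neighborhood basis $\{V_n\}$ at $x$, and then uses local compactness to extract a single point $y\in U$ with $t_nx\to x$ and $t_ny\to x$. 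You instead apply the syndetic-meets-thick step once per $n$ to show each open approximant $O_n$ is dense, and let the Baire category theorem (valid because locally compact Hausdorff spaces are Baire) do the nesting for you. The dynamical core is identical in both proofs: almost periodicity gives syndeticity of $N_T(x,V)$, thick transitivity gives thickness of $N_T(W,V)$, and these must meet by the equivalence recorded in \S\ref{sec1.4}. Your factoring through Baire is self-contained (no external citation needed), and it consumes local compactness in an essentially equivalent way, so nothing is lost.

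On the coda ``no point of $X$ is distal'': your worry is legitimate, but you should know that the paper's own proof does no more than what you already have. The paper proves exactly the dense-$G_\delta$ statement and then asserts the coda with an ``and so''; as you note, this immediately covers only almost periodic points (a dense set in a non-singleton metric space contains a point other than $x$, and proximality is symmetric). For a point that is \emph{not} almost periodic one needs precisely the fact you flag --- that a distal point is almost periodic, or that every point is proximal to an almost periodic point --- and these are compact-phase-space facts proved via minimal idempotents in the enveloping semigroup; there is no obvious substitute on a merely locally compact space, and your proposed fix of working inside a compact neighborhood is unlikely to go through as stated, since orbits escape compact sets and the enveloping-semigroup machinery needs the whole phase space compact. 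So, read literally in the stated generality, the coda is a gap in the paper itself rather than a defect of your argument. In every place the proposition is actually applied (Propositions~\ref{prop3.17}, \ref{prop3.18} and \ref{prop3.19}) the semiflow is minimal on a compact metric space, so every point is almost periodic and your first observation already yields the full conclusion there.
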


\begin{proof}
Let $(X,d)$ be a locally compact metric space, which is not a singleton. We first note that the proximal cell at every point $x\in X$
\begin{gather*}
\mathcal{P}_x(T,X)=\bigcap_{n=1}^\infty\left({\bigcup}_{t\in T}t^{-1}B_{1/n}(tx)\right)
\end{gather*}
is a $G_\delta$-set in $X$, where
$B_{r}(tx)=\{y\in X\,|\,d(tx,y)<r\}$.
So it is enough to show the density of $\mathcal{P}_x(T,X)$. For this, given any almost periodic point $x\in X$ and any nonempty open set $U$ in $X$, we can find $y\in U$ and $t_n\in T$ with $t_nx\to x$ and $t_ny\to x$
(cf.~Proof of \cite[Theorem~9.12]{Fur} for the details). Thus $\mathcal{P}_x(T,X)$ is dense in $X$. This thus concludes Proposition~\ref{prop3.14}.
\end{proof}
\subsubsection{Stable sets}
There is no ``$t_n\to\infty$" in a general semigroup setting. The following lemma makes us get around this hard point, which partially generalizes \cite[Asymptotic Thoerem, p.~262]{HY}.

\begin{lem}\label{lem3.15}
Let $(T,X,\pi)$ be a sensitive topological semiflow on a uniform Baire space $(X,\mathscr{U}_X)$. Then there exists a sensitivity index $\varepsilon\in\mathscr{U}_X$ such that for any sequence $\mathcal{F}=\{F_n\}_{n=1}^\infty$ of compact subsets of $T$
and any $x\in X$, the $(\varepsilon,\mathcal{F})$-stable set of $x$ under the $T$-action
\begin{gather*}
W_\varepsilon^s(x|\mathcal{F}):=\bigcup_{n=1}^\infty W_\varepsilon^s(x|F_n),\quad \textrm{where }W_\varepsilon^s(x|F_n)=\bigcap_{t\not\in F_n} t^{-1}\overline{\varepsilon[tx]},
\end{gather*}
is of the first category in $X$.
\end{lem}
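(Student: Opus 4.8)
The plan is to show that each stable piece $W_\varepsilon^s(x|F_n)$ is closed and nowhere dense; then $W_\varepsilon^s(x|\mathcal{F})$, being a countable union of such sets, is of the first category, which is the assertion (and, $X$ being a Baire space, it has dense complement). First I would fix a sensitivity index $\eta\in\mathscr{U}_X$ for $(T,X,\pi)$ and choose the \emph{symmetric} index $\varepsilon$ claimed in the lemma so small that $\varepsilon\circ\varepsilon\circ\varepsilon\circ\varepsilon\subseteq\eta$; since any index contained in a sensitivity index is again a sensitivity index, this $\varepsilon$ is admissible. Closedness of $W_\varepsilon^s(x|F_n)=\bigcap_{t\notin F_n}t^{-1}\overline{\varepsilon[tx]}$ is immediate, because each $t^{-1}\overline{\varepsilon[tx]}=\pi_t^{-1}(\overline{\varepsilon[tx]})$ is the preimage of a closed set under the continuous map $\pi_t$, and an intersection of closed sets is closed.

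The heart of the matter is to prove that $W:=W_\varepsilon^s(x|F_n)$ has empty interior. I would argue by contradiction, assuming $W$ contains a nonempty open set $U$. Using the standard identity $\overline{A}=\bigcap_{\delta\in\mathscr{U}_X}\delta[A]$ valid in any uniform space, membership $tz\in\overline{\varepsilon[tx]}$ forces $(tx,tz)\in\varepsilon\circ\varepsilon$; hence, by symmetry of $\varepsilon$, for any two points $z,z'\in W$ and any $t\notin F_n$ one obtains $(tz,tz')\in\varepsilon\circ\varepsilon\circ\varepsilon\circ\varepsilon\subseteq\eta$. In other words, orbits of points of $U$ remain $\eta$-close at all times lying outside $F_n$. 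I would then pick $w\in U$ and invoke sensitivity at $w$: for a suitably small $\delta\in\mathscr{U}_X$ there are $y\in\delta[w]$ and $t_0\in T$ with $(t_0w,t_0y)\notin\eta$. Choosing $\delta$ small enough that also $\delta[w]\subseteq U$ makes $w,y\in U\subseteq W$, so the estimate above rules out the possibility $t_0\notin F_n$.

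The remaining case, $t_0\in F_n$, is the only delicate one, and excluding it requires equicontinuity over the compact time-set $F_n$ \emph{at the single point} $w$: by joint continuity of $\pi$ and compactness of $F_n$ (covering $F_n\times\{w\}$ by finitely many basic neighborhoods on which $\pi$ varies within $\eta$), there is an index $\delta_1\in\mathscr{U}_X$ such that $(tw,ty)\in\eta$ for every $t\in F_n$ whenever $y\in\delta_1[w]$. I emphasize that this step uses only compactness of $F_n$, not any compactness of $X$. Taking the $\delta$ from the previous paragraph inside $\delta_1$ forces $(t_0w,t_0y)\in\eta$ when $t_0\in F_n$, again contradicting the choice of $y$ and $t_0$.

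Thus no such $U$ exists, each $W_\varepsilon^s(x|F_n)$ is nowhere dense, and the countable union $W_\varepsilon^s(x|\mathcal{F})=\bigcup_n W_\varepsilon^s(x|F_n)$ is of the first category, completing the proof. The main obstacle is precisely the equicontinuity-over-$F_n$ step: sensitivity is a priori witnessed by an arbitrary $t_0\in T$, and this localized equicontinuity is exactly what is needed to force the witness onto times outside the compact constraint set $F_n$, where the stability estimate applies.
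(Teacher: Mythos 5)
Your proof is correct and takes essentially the same route as the paper's: both argue that each $W_\varepsilon^s(x|F_n)$ is a closed set with empty interior, combining the stability estimate for times $t\notin F_n$ with equicontinuity over the compact set $F_n$ (obtained from joint continuity of $\pi$) to contradict sensitivity at a point of the supposed interior. If anything, your index bookkeeping is more careful than the paper's, which takes $\varepsilon^2\subseteq\varepsilon'$ and glosses over the extra composition forced by the closure $\overline{\varepsilon[tx]}$, whereas your choice $\varepsilon\circ\varepsilon\circ\varepsilon\circ\varepsilon\subseteq\eta$ handles it exactly.
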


\begin{proof}
Let $\varepsilon^\prime\in\mathscr{U}_X$ be a sensitivity index and let $\varepsilon\in\mathscr{U}_X$ with $\varepsilon^2\subseteq\varepsilon^\prime$.
The sensitivity follows immediately that each $W_\varepsilon^s(x|F_n)$ has no nonempty interior. Indeed, otherwise, one can find some $y\in X$ and $\delta\in\mathscr{U}_X$ so that $\delta[y]\subset W_\varepsilon^s(x|F_n)$. Since $F_n$ is compact, we may let $(ty,tz)\in\varepsilon$ for all $z\in\delta[y]$ and $t\in F_n$ by shrinking $\delta$ if necessary. This implies that for each $z\in\delta[y]$, $(ty,tz)\in\varepsilon^2\subseteq\varepsilon^\prime\ \forall t\in T$. This is impossible.
Thus $W_\varepsilon^s(x|F_n)$ and further $W_\varepsilon^s(x|\mathcal{F})$ is of the first category for $X$ is a Baire space.
\end{proof}

In view of Proposition~\ref{prop2.8}, Lemma~\ref{lem3.15} is applicable for non-minimal M-semiflows. Given any index $\varepsilon\in\mathscr{U}_X$ and any point $x\in X$, for $(T,X,\pi)$ we write
\begin{gather*}
W_\varepsilon^s(T,X;x)=\left\{y\in X\,|\,\exists\textrm{ a compact subset }K\textrm{ of }T\textrm{ s.t. }(tx,ty)\in\varepsilon\ \forall t\in T\setminus K\right\}.
\end{gather*}
The following is a simple consequence of Lemma~\ref{lem3.15}.

\begin{cor}
Let $(T,X,\pi)$ be a sensitive topological semiflow on a compact Hausdorff space with a countable phase semigroup $T$. Then there exists an index $\varepsilon\in\mathscr{U}_X$ such that $W_\varepsilon^s(T,X;x)$ is of the first category for each $x\in X$.
\end{cor}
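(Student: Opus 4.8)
The plan is to deduce this directly from Lemma~\ref{lem3.15} by exhibiting, for each $x$, a single reference sequence $\mathcal{F}=\{F_n\}_{n=1}^\infty$ of compact subsets of $T$ for which $W_\varepsilon^s(T,X;x)\subseteq W_\varepsilon^s(x\,|\,\mathcal{F})$, and then invoking the lemma to see that the larger set, hence the smaller, is of the first category. First I would check that the hypotheses of Lemma~\ref{lem3.15} are met: a compact Hausdorff space is completely regular, so it carries a compatible symmetric uniformity $\mathscr{U}_X$, and it is a Baire space; and $(T,X,\pi)$ is sensitive by assumption. Thus Lemma~\ref{lem3.15} supplies a sensitivity index $\varepsilon\in\mathscr{U}_X$ (taken with $\varepsilon^2\subseteq\varepsilon^\prime$ for the original index $\varepsilon^\prime$) such that $W_\varepsilon^s(x\,|\,\mathcal{F})$ is of the first category for \emph{every} sequence $\mathcal{F}$ of compact sets and every $x\in X$. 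This is precisely the $\varepsilon$ I would produce in the statement of the corollary.

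The heart of the argument is the choice of $\mathcal{F}$, and this is where the countability of $T$ enters. I would fix an increasing sequence $F_1\subseteq F_2\subseteq\dotsm$ of compact subsets of $T$ that is \emph{cofinal} among all compact subsets, i.e. every compact $K\subseteq T$ satisfies $K\subseteq F_n$ for some $n$. When $T$ is countable and discrete this is immediate: enumerate $T=\{t_1,t_2,\dotsc\}$ and set $F_n=\{t_1,\dotsc,t_n\}$, each finite and hence compact, so that any compact (equivalently finite) subset lands in some $F_n$. Granting such an $\mathcal{F}$, I would fix $x\in X$ and take any $y\in W_\varepsilon^s(T,X;x)$; by definition there is a compact $K\subseteq T$ with $(tx,ty)\in\varepsilon$ for all $t\in T\setminus K$. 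Choosing $n$ with $K\subseteq F_n$, every $t\notin F_n$ also satisfies $t\notin K$, so $ty\in\varepsilon[tx]\subseteq\overline{\varepsilon[tx]}$, that is $y\in t^{-1}\overline{\varepsilon[tx]}$ for all $t\notin F_n$. Hence $y\in W_\varepsilon^s(x\,|\,F_n)\subseteq W_\varepsilon^s(x\,|\,\mathcal{F})$, yielding the desired inclusion $W_\varepsilon^s(T,X;x)\subseteq W_\varepsilon^s(x\,|\,\mathcal{F})$.

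Finally, since $W_\varepsilon^s(x\,|\,\mathcal{F})$ is of the first category by Lemma~\ref{lem3.15} and a subset of a first-category set is again of the first category, $W_\varepsilon^s(T,X;x)$ is of the first category; as $x\in X$ was arbitrary, this proves the corollary.

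I expect the only real obstacle to be the construction of the cofinal exhausting sequence $\mathcal{F}$, i.e. the passage from ``$\exists$ compact $K$'' in the definition of $W_\varepsilon^s(T,X;x)$ to ``$\exists n$ with $K\subseteq F_n$''. This is transparent for countable \emph{discrete} $T$, where compactness reduces to finiteness, so that the hard part of the proof dissolves. For a merely countable non-discrete $T$ one should be aware that an infinite compact subset may fail to be absorbed by any prescribed sequence (as for $\mathbb{Q}$, which is countable yet not hemicompact); to cover that case one would additionally want $T$ hemicompact (e.g. countable locally compact Hausdorff), and I would make that the operative reading of ``countable phase semigroup'' here, since it is exactly the hypothesis that licenses the single reference sequence on which the reduction to Lemma~\ref{lem3.15} rests.
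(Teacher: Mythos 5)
Your proof is correct and is exactly the paper's intended argument: the paper offers no proof beyond declaring the corollary a ``simple consequence of Lemma~\ref{lem3.15}'', and your reduction --- exhaust $T$ by compact sets $F_n$ (finite sets $\{t_1,\dotsc,t_n\}$ in the discrete case), absorb the compact witness $K$ of any $y\in W_\varepsilon^s(T,X;x)$ into some $F_n$, and conclude $W_\varepsilon^s(T,X;x)\subseteq W_\varepsilon^s(x\,|\,\mathcal{F})$, which is of the first category --- is precisely that consequence, with the hypotheses of Lemma~\ref{lem3.15} verified since a compact Hausdorff space is a uniformizable Baire space. Your closing caveat is also a genuine observation and not a quibble: the corollary says only ``countable'', not ``countable discrete'' (a distinction the paper makes explicitly elsewhere, e.g.\ in Propositions~\ref{prop3.12} and~\ref{prop3.13}), so a semigroup such as $(\mathbb{Q},+)$ is formally admitted, for which no sequence of compact subsets is cofinal among all compact subsets and the absorption step genuinely fails; the discrete (or, more generally, hemicompact) reading you adopt is the one under which the statement really is a simple consequence of Lemma~\ref{lem3.15}.
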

\subsubsection{Scrambled sets}
The following result shows that every minimal topologically weakly-mixing abelian transformation semigroup has always many Li-Yorke chaotic pairs.

\begin{prop}\label{prop3.17}
Let $(T,X,\pi)$ be a minimal thickly transitive topological semiflow with $T$ abelian on a non-singleton compact metric space $X$. Then there exists a sensitivity constant $\varepsilon>0$ such that
$\mathcal{P}_x(T,X)\setminus W_\varepsilon^s(x|\mathcal{F})$ is a dense $G_\delta$-set in $X$, for any $x\in X$ and any increasing sequence $\mathcal{F}=\{F_n\}_{n=1}^\infty$ of compact subsets of $T$.
\end{prop}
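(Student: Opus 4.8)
The plan is to combine the density of the proximal cell (Proposition~\ref{prop3.14}) with the first-category estimate for stable sets (Lemma~\ref{lem3.15}), after first upgrading the hypotheses to sensitivity. Since $(T,X,\pi)$ is minimal, every point of $X$ is almost periodic and every orbit is dense; in particular $(T,X,\pi)$ is topologically transitive with dense almost periodic points, i.e.\ an M-semiflow. Because $T$ is abelian and $X$ is a compact metric (hence compact Hausdorff) space, the dichotomy of Corollary~\ref{cor2.7} applies, so $(T,X,\pi)$ is either minimal equicontinuous or sensitive to initial conditions.

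The first step, and the step I expect to be the main obstacle, is to exclude the equicontinuous alternative. Here I would use that thickly transitive implies \textit{IP}-transitive; since $X$ is a non-singleton compact metric space, and so first countable compact Hausdorff, Proposition~\ref{prop3.5} then guarantees that no point of $X$ is distal for $(T,X,\pi)$. On the other hand, a minimal equicontinuous semiflow with abelian phase semigroup is distal by \cite{DX}, so in the equicontinuous case \emph{every} point of $X$ would be a distal point. These two facts are incompatible, whence the equicontinuous alternative is ruled out and Corollary~\ref{cor2.7} forces $(T,X,\pi)$ to be sensitive.

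With sensitivity in hand I would fix the sensitivity index furnished by Lemma~\ref{lem3.15}; since $X$ is metric this index is a constant $\varepsilon>0$, and it works simultaneously for every $x\in X$ and every increasing sequence $\mathcal{F}=\{F_n\}_{n=1}^\infty$. Lemma~\ref{lem3.15} gives that $W_\varepsilon^s(x|\mathcal{F})$ is of the first category in $X$. I would moreover record that each $W_\varepsilon^s(x|F_n)=\bigcap_{t\notin F_n}t^{-1}\overline{\varepsilon[tx]}$ is a closed set with empty interior (the latter being exactly the emptiness of interior established inside the proof of Lemma~\ref{lem3.15}), so that $W_\varepsilon^s(x|\mathcal{F})=\bigcup_{n}W_\varepsilon^s(x|F_n)$ is an $F_\sigma$-set of the first category; consequently its complement is a dense $G_\delta$-set of the compact, hence Baire, space $X$.

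Finally, minimality makes the given $x$ an almost periodic point, so Proposition~\ref{prop3.14} shows that the proximal cell $\mathcal{P}_x(T,X)$ is a dense $G_\delta$-set. The desired set is then
\[
\mathcal{P}_x(T,X)\setminus W_\varepsilon^s(x|\mathcal{F})=\mathcal{P}_x(T,X)\cap\bigl(X\setminus W_\varepsilon^s(x|\mathcal{F})\bigr),
\]
an intersection of two dense $G_\delta$-sets in the Baire space $X$, hence again a dense $G_\delta$-set, which is the assertion. As a sanity check on the conclusion, any $y$ in this set is proximal to $x$ and, since $y\notin W_\varepsilon^s(x|F_n)$ for every $n$, admits for each $n$ some $s_n\in T\setminus F_n$ with $(s_nx,s_ny)\notin\varepsilon$; thus every such $y$ forms a Li-Yorke chaotic pair with $x$ relative to $\mathcal{F}$, so the proposition indeed produces a dense abundance of Li-Yorke pairs.
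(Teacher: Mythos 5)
Your proposal is correct and follows essentially the same route as the paper's proof: Proposition~\ref{prop3.14} for density of the proximal cell, Proposition~\ref{prop3.5} (via \textit{IP}-transitivity) to rule out distality and hence equicontinuity, the abelian dichotomy/sensitivity result of $\S\ref{sec2.1}$ to get sensitivity, and Lemma~\ref{lem3.15} for the first-category stable sets. Your version is in fact slightly more careful than the paper's, since you make explicit the step (equicontinuous minimal abelian $\Rightarrow$ distal, by \cite{DX}) that the paper leaves implicit when passing from ``not distal'' to Proposition~\ref{prop2.8}-(1), and you verify the $F_\sigma$/$G_\delta$ bookkeeping needed for the final intersection.
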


\begin{proof}
Let $x\in X$ be arbitrarily given. By Proposition~\ref{prop3.14}, it follows that the proximal cell $\mathcal{P}_x(T,X)$ is a dense $G_\delta$-set of $X$. On the other hand, by Proposition~\ref{prop3.5} we see that $(T,X,\pi)$ is not distal. Then from Proposition~\ref{prop2.8}-(1), it follows that $(T,X,\pi)$ is sensitive. Let $\varepsilon>0$ be a sensitive constant. Then Lemma~\ref{lem3.15} follows that $W_\varepsilon^s(x|\mathcal{F})$ is of first category in $X$. Therefore, it follows that $\mathcal{P}_x(T,X)\setminus W_\varepsilon^s(x|\mathcal{F})$ is a dense $G_\delta$-set in $X$. This proves Proposition~\ref{prop3.17}.
\end{proof}

Now it is time to state a Li-Yoke chaos result for minimal topologically weakly-mixing semiflows on Polish spaces.

\begin{prop}\label{prop3.18}
For every minimal thickly transitive topological semiflow $(T,X,\pi)$ on a non-singleton compact metric space $X$ with $T$ an abelian semigroup, it is densely Li-Yorke chaotic.
\end{prop}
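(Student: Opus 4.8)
The plan is to extract a dense scrambled set from the fiberwise information already supplied by Proposition~\ref{prop3.17}, feeding it into the Kuratowski--Mycielski machinery of Lemma~\ref{lem2.13}. Fix an arbitrary (increasing) reference sequence $\mathcal{F}=\{F_n\}_{n=1}^\infty$ of compact subsets of $T$. Since $(T,X,\pi)$ is minimal and thickly transitive with $T$ abelian on a non-singleton compact metric space, the hypotheses of Proposition~\ref{prop3.17} are met verbatim, so it furnishes a single sensitivity constant $\varepsilon>0$, independent of the base point, for which the \emph{good fiber} $\mathcal{P}_x(T,X)\setminus W_\varepsilon^s(x|\mathcal{F})$ is a dense $G_\delta$ subset of $X$ for every $x\in X$. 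I would also record at the outset that $X$ has no isolated points: thick transitivity together with minimality (hence dense almost periodic points) yields weak mixing by Corollary~\ref{cor3.3}(3), and a non-singleton space carrying a weakly-mixing semiflow has no isolated points, as noted just before Proposition~\ref{prop3.9}.

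Next I would package the two defining requirements of a Li--Yorke pair into one subset of $X^2$, namely
\[
\mathrm{LY}=\bigl\{(x,y)\in X^2 : (x,y)\in\mathcal{P}(T,X)\ \text{and}\ y\notin W_\varepsilon^s(x|\mathcal{F})\bigr\}.
\]
The whole point of this definition is that membership is \emph{exactly} the Li--Yorke condition relative to $\mathcal{F}$: the first clause makes $(x,y)$ proximal, while $y\notin W_\varepsilon^s(x|\mathcal{F})=\bigcup_nW_\varepsilon^s(x|F_n)$ unwinds to say that for every $n$ there is some $s_n\in T\setminus F_n$ with $d(s_nx,s_ny)>\varepsilon$, so that $\liminf_n d(s_nx,s_ny)\ge\varepsilon>0$ \emph{with} the crucial constraint $s_n\notin F_n$. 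I would then verify that $\mathrm{LY}$ is a $G_\delta$ set in $X^2$: one has $\mathcal{P}(T,X)=\bigcap_m\bigcup_{t\in T}\{(x,y):d(tx,ty)<1/m\}$, and the separated part equals $\bigcap_n\bigcup_{t\notin F_n}\{(x,y):d(tx,ty)>\varepsilon\}$, both plainly $G_\delta$. Note also that $\mathrm{LY}$ is symmetric, since proximality and the separation condition are each symmetric in $x$ and $y$, so independence in either coordinate order produces a genuine unordered Li--Yorke pair.

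The density of $\mathrm{LY}$ in $X^2$ is where the fiberwise statement does its work: given any nonempty open box $U\times V$, choose any $x_0\in U$; by Proposition~\ref{prop3.17} the fiber $\{y:(x_0,y)\in\mathrm{LY}\}=\mathcal{P}_{x_0}(T,X)\setminus W_\varepsilon^s(x_0|\mathcal{F})$ is dense in $X$, hence meets $V$ and produces a point of $\mathrm{LY}$ in $U\times V$. Thus $\mathrm{LY}$ is a dense $G_\delta$ in $X^2$, and $R_1:=X^2\setminus\mathrm{LY}$ is an $F_\sigma$ set of the first category. Applying Lemma~\ref{lem2.13} with $W=X$ (no isolated points) to the single relation $R_1\subset X^2$, I obtain a dense scrambled set $\Theta$ in which any two distinct points lie in $\mathrm{LY}$ and hence form a Li--Yorke pair relative to $\mathcal{F}$. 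As $\mathcal{F}$ was arbitrary, this shows $(T,X,\pi)$ is densely Li--Yorke chaotic.

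The main obstacle I anticipate is precisely this last step. Lemma~\ref{lem2.13} directly yields only \emph{compact} (Cantor) independent sets, yet a compact dense subset of $X$ must equal $X$; so the genuine work is to manufacture a set that is simultaneously dense in $X$ and scrambled. This is supplied by the standard Mycielski construction: one builds an increasing tower $C_1\subseteq C_2\subseteq\cdots$ of independent Cantor sets, arranged so that $\bigcup_m C_m$ meets every member of a countable base of $X$; then $\Theta=\bigcup_m C_m$ is dense and uncountable, while any two of its points already sit in a common $C_m$ and are therefore independent in $R_1$. Everything else is bookkeeping---matching the complement of $W_\varepsilon^s(x|\mathcal{F})$ to the $s_n\notin F_n$ constraint, and checking the elementary $G_\delta$ and density claims for $\mathrm{LY}$.
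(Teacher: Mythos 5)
Your proof is correct, and its first half---invoking Proposition~\ref{prop3.17} for a single sensitivity constant $\varepsilon$ whose fibers $\mathcal{P}_x(T,X)\setminus W_\varepsilon^s(x|\mathcal{F})$ are dense $G_\delta$, together with the observation that $X$ has no isolated points---is exactly the paper's starting point. You diverge at the extraction step. The paper stays fiberwise: from residuality of each fiber $\mathrm{LY}_x(T,X)$ it builds, by induction over a countable base, a countable dense set $L$ all of whose pairs are Li-Yorke (each new point is chosen in a basic open set intersected with the finitely many residual fibers of the points already chosen), and then extends $L$ by Zorn's lemma to a maximal Li-Yorke set, which is forced to be uncountable since a countable maximal set could be enlarged by any point of the residual set $\bigcap_{x\in S}\mathrm{LY}_x(T,X)$ not already in it. You instead pass to the product space: you bundle the fibers into one relation $\mathrm{LY}\subseteq X^2$, check it is a dense $G_\delta$, and run the Kuratowski--Mycielski machinery on its meager $F_\sigma$ complement. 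Your route is the one the paper itself uses for its weakly-mixing results (Propositions~\ref{prop3.9}--\ref{prop3.11} and case (a) of Proposition~\ref{prop3.21}); it buys extra structure (the scrambled set is a dense countable union of Cantor sets) and avoids the choice-theoretic Zorn step, while the paper's route needs nothing beyond residuality of fibers and, in particular, never confronts the issue you correctly flag: Lemma~\ref{lem2.13} as literally stated yields compact independent sets dense in $\mathscr{K}(X)$, not sets dense in $X$. Your increasing tower of independent Cantor sets with dense union is precisely the density form of Mycielski's theorem, genuinely stronger than the quoted Lemma~\ref{lem2.13}; the paper leans on the same strengthening implicitly (e.g.\ the ``dense Cantor set'' in the proof of Proposition~\ref{prop3.9}), so your explicit appeal to it is the more honest formulation.

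Two small points to clean up. First, since $W_\varepsilon^s(x|F_n)$ is defined through the closures $\overline{\varepsilon[tx]}$, a point $y\notin W_\varepsilon^s(x|\mathcal{F})$ only guarantees, for each $n$, some $t\notin F_n$ with $d(tx,ty)\ge\varepsilon$; your $\mathrm{LY}$ demands the strict inequality $d(tx,ty)>\varepsilon$, so its fibers are a priori slightly smaller than the sets Proposition~\ref{prop3.17} proves dense. Defining $\mathrm{LY}$ with $\varepsilon/2$ in place of $\varepsilon$ makes the density transfer verbatim and still yields separation. Second, independence in $R_1$ already quantifies over all ordered pairs of distinct points of the independent set, so the symmetry of $\mathrm{LY}$ need not be checked.
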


\begin{proof}
Let $(T,X,\pi)$ be a minimal thickly transitive topological semiflow with $T$ an abelian topological semigroup and where $X$ is a compact metric non-singleton. By Corollary~\ref{cor3.3}-(3) and Lemma~\ref{lem3.7}, $T$ is not compact. Let $\mathcal{F}=\{F_n\}_{n=1}^\infty$ be an increasing sequence of compact subsets of $T$. For any $x\in X$ and $\varepsilon>0$, define
\begin{gather*}
\textrm{LY}_x(T,X)=\left\{y\in X\,|\,\exists\, t_n\in T,\tau_n\in T\setminus F_n\textit{ s.t. }\lim_{n\to\infty}d(t_nx,t_ny)=0,\ \lim_{n\to\infty}d(\tau_nx,\tau_ny)>\varepsilon\right\}
\end{gather*}
By Proposition~\ref{prop3.17}, for some $\varepsilon>0$, $\textrm{LY}_x(T,X)$ is residual in $X$ for any $x\in X$.

Since $(T,X,\pi)$ is sensitive by Proposition~\ref{prop3.14} and Proposition~\ref{prop2.8}, then no point of $X$ is isolated whence $X$ is uncountable.

By induction, we can easily choose a countable dense subset $L$ of $X$ such that every pair of points $x,y\in L, x\not=y$ is a Li-Yorke chaotic pair. Further by using Zorn's lemma, we can find a dense Li-Yorke chaotic set $S\subseteq X$.

This thus proves Proposition~\ref{prop3.18}.
\end{proof}

Similarly, we can obtain the following result which is for every non-abelian group action, which is comparable with Propositions~\ref{prop3.11} and \ref{prop3.12}.

\begin{prop}\label{prop3.19}
Let $(T,X,\pi)$ be a minimal thickly transitive topological flow on a non-singleton compact metric space $X$ with $T$ a topological group. Then $(T,X,\pi)$ is densely Li-Yorke chaotic.
\end{prop}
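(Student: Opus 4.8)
The plan is to run the proof of Proposition~\ref{prop3.18} essentially unchanged; the single step that genuinely used commutativity there was the appeal to Proposition~\ref{prop2.8}-(1) for sensitivity, and that tool is not available for a non-abelian minimal flow (the minimal equicontinuous-or-sensitive dichotomy being unknown in that generality). So the real work is to produce sensitivity by other means. First I would record the ambient structure: since $(T,X,\pi)$ is minimal every point is almost periodic, hence the almost periodic points are dense, and together with thick transitivity Corollary~\ref{cor3.3}-(3)---whose proof is explicitly commutativity-free---gives that $(T,X,\pi)$ is topologically weakly-mixing. Lemma~\ref{lem3.7} (the group case) then shows $T$ is non-compact, so that $T\setminus F_n\neq\emptyset$ for every term of an arbitrary reference sequence $\mathcal{F}=\{F_n\}_{n=1}^\infty$.

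Next I would establish sensitivity directly from weak-mixing, with no recourse to any dichotomy. As $X$ is non-singleton, fix $a,b\in X$ with $d(a,b)=4\eta>0$ and set $U=B_\eta(a)$, $V=B_\eta(b)$. Given any $x\in X$ and any neighborhood $O$ of $x$, weak-mixing furnishes some $t\in N_T(O,U)\cap N_T(O,V)$, i.e. points $y,z\in O$ with $ty\in U$ and $tz\in V$; then $d(ty,tz)\ge 2\eta$, so $\max\{d(tx,ty),d(tx,tz)\}\ge\eta$ and at least one of $y,z$ lies in $O$ yet is driven $\eta$-far from the image of $x$. Hence $\eta$ is a sensitivity index and $(T,X,\pi)$ is sensitive.

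From here every remaining ingredient is insensitive to commutativity. Proposition~\ref{prop3.14}, applied on the compact (hence locally compact) metric space $X$, shows that for each $x\in X$ the proximal cell $\mathcal{P}_x(T,X)$ is a dense $G_\delta$-set. Lemma~\ref{lem3.15}, applicable because a compact metric space is a uniform Baire space, yields a sensitivity index $\varepsilon$ for which $W_\varepsilon^s(x|\mathcal{F})$ is of the first category for every $x$. Exactly as in Proposition~\ref{prop3.17} the set $\mathcal{P}_x(T,X)\setminus W_\varepsilon^s(x|\mathcal{F})$ is then residual in $X$, and every $y$ in it is proximal to $x$ while admitting separating times $\tau_n\in T\setminus F_n$ with $\liminf_n d(\tau_nx,\tau_ny)\ge\varepsilon$; that is, $(x,y)$ is a Li-Yorke pair relative to $\mathcal{F}$. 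Finally, mirroring the closing argument of Proposition~\ref{prop3.18}, I would use the residuality of these ``Li-Yorke cells'' and the Baire property to build inductively a countable dense set all of whose pairs are Li-Yorke pairs, and then invoke Zorn's lemma to extend it to a dense Li-Yorke chaotic set; since $\mathcal{F}$ was arbitrary this proves dense Li-Yorke chaos.

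The main obstacle, and the only place where the abelian proof truly breaks, is the sensitivity step. The key realization is that weak-mixing by itself forces sensitivity on any non-singleton metric phase space---the two-target argument above---so one never needs a minimality dichotomy; all of Corollary~\ref{cor3.3}-(3), Lemma~\ref{lem3.7} for groups, Proposition~\ref{prop3.14}, Lemma~\ref{lem3.15}, and the Kuratowski--Zorn construction already hold for arbitrary phase groups $T$.
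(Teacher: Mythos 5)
Your proof is correct, and it reaches the key step---sensitivity---by a genuinely different route from the paper. The paper's own proof gets sensitivity as follows: by Proposition~\ref{prop3.5} (thickly transitive $\Rightarrow$ \textit{IP}-transitive $\Rightarrow$ no distal points) the minimal flow $(T,X,\pi)$ is not distal; since an equicontinuous flow on a compact space is distal, $(T,X,\pi)$ cannot be equicontinuous; and then the external dichotomy \cite[Theorem~5.7]{KM} for minimal group actions yields sensitivity. You instead observe that minimality gives dense almost periodic points, so Corollary~\ref{cor3.3}-(3) (explicitly commutativity-free) upgrades thick transitivity to weak mixing, and then you prove ``weak mixing $\Rightarrow$ sensitive'' directly by the two-target argument (hitting $B_\eta(a)$ and $B_\eta(b)$ simultaneously from within any small neighborhood). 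In doing so you have essentially given an independent, elementary proof of the paper's Lemma~\ref{lem3.20}, which the paper proves by a contradiction argument at an equicontinuity point and uses in Propositions~\ref{prop3.21} and~\ref{prop3.22} but, curiously, not in Proposition~\ref{prop3.19}; you could shorten your write-up by simply citing Lemma~\ref{lem3.20} after invoking Corollary~\ref{cor3.3}-(3). What your route buys is self-containedness: it avoids both the distality detour through Proposition~\ref{prop3.5} and the appeal to the Kontorovich--Megrelishvili dichotomy, at no cost in generality. The remainder of your argument (Lemma~\ref{lem3.7} for groups, Proposition~\ref{prop3.14}, Lemma~\ref{lem3.15}, residuality of the Li-Yorke cells, then the inductive/Zorn construction of a dense uncountable scrambled set) coincides with the paper's closing step, which is itself a verbatim reuse of the end of Proposition~\ref{prop3.18}; your diagnosis that Proposition~\ref{prop2.8}-(1) was the lone commutativity-dependent ingredient there is also accurate.
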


\begin{proof}
First by Lemma~\ref{lem3.7} and Corollary~\ref{cor3.3}-(3), $T$ is not compact.
By Proposition~\ref{prop3.5}, we see that $(T,X,\pi)$ is minimal but \textit{not distal}. If $(T,X,\pi)$ is sensitive, then the statement holds by an argument same as the proof of Proposition~\ref{prop3.18} above. Thus to prove Proposition~\ref{prop3.19}, it is enough to show $(T,X,\pi)$ is sensitive. For this, it is sufficient to prove that $(T,X,\pi)$ is not equicontinuous by \cite[Theorem~5.7]{KM}. Indeed, if $(T,X,\pi)$ were equicontinuous, then $(T,X,\pi)$ is distal. This is a contradiction. Thus the proof of Proposition~\ref{prop3.19} is completed.
\end{proof}

The following lemma is a simple observation. It may be paraphrased as: ``equicontinuity and weakly-mixing are incompatible.'' See, for example,~\cite{XY, HY} for cascades on compact metric spaces.

\begin{lem}\label{lem3.20}
Let $(T,X,\pi)$ be a topologically weakly-mixing semiflow on a non-singleton Hausdorff uniform space $(X,\mathscr{U}_X)$. Then $(T,X,\pi)$ is sensitive to initial conditions.
\end{lem}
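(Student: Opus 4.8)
The plan is to reproduce, in the uniform-space language, the classical argument that a non-trivial weakly-mixing system separates points along a single time. Because $(T,X,\pi)$ is weakly mixing, by Definition~\ref{def3.1} the product $(T,X\times X)$ is topologically transitive, and this is the only dynamical input I will use; in particular no compactness of $X$ and no restriction on $T$ will be needed.

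First I would manufacture the sensitivity index from the non-degeneracy of $X$. Since $X$ is a non-singleton Hausdorff uniform space, I can pick $a\neq b$ in $X$; Hausdorffness of $\mathscr{U}_X$ (that is, $\bigcap\mathscr{U}_X=\varDelta_X$) then yields a symmetric index $\alpha\in\mathscr{U}_X$ with $(a,b)\notin\alpha^4$. I take $A=\textit{Int}_X(\alpha[a])$ and $B=\textit{Int}_X(\alpha[b])$, which are open neighbourhoods of $a$ and $b$ respectively, since in the uniform topology each $\alpha[\,\cdot\,]$ is a neighbourhood of its base point. The first key estimate is that $A$ and $B$ are $\alpha^2$-separated: if $u\in A$ and $v\in B$ satisfied $(u,v)\in\alpha^2$, then chaining $(a,u)\in\alpha$, $(u,v)\in\alpha^2$, $(v,b)\in\alpha$ and using symmetry of $\alpha$ would give $(a,b)\in\alpha\circ\alpha^2\circ\alpha=\alpha^4$, a contradiction. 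Hence $(A\times B)\cap\alpha^2=\emptyset$.

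Next I would run the weak-mixing step pointwise. Fix any $x\in X$ and any $\delta\in\mathscr{U}_X$, and put $U=\textit{Int}_X(\delta[x])$, an open neighbourhood of $x$. Applying topological transitivity of $(T,X\times X)$ to the nonempty open sets $U\times U$ and $A\times B$ --- equivalently $N_T(U,A)\cap N_T(U,B)\neq\emptyset$ --- produces $t\in T$ and points $y_1,y_2\in U\subseteq\delta[x]$ with $ty_1\in A$ and $ty_2\in B$, so that $(ty_1,ty_2)\notin\alpha^2$ by the previous paragraph. Finally I claim $\alpha$ is a sensitivity index: were both $(tx,ty_1)\in\alpha$ and $(tx,ty_2)\in\alpha$, then symmetry of $\alpha$ would force $(ty_1,ty_2)\in\alpha\circ\alpha=\alpha^2$, contradicting $(ty_1,ty_2)\notin\alpha^2$. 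Therefore at least one $y\in\{y_1,y_2\}\subseteq\delta[x]$ satisfies $(tx,ty)\notin\alpha$, which is exactly sensitivity to initial conditions with index $\alpha$.

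The dynamical content is light; the part demanding care is the entourage bookkeeping that replaces the metric triangle inequality. I expect the main obstacle to be making the compositions $\alpha^2$ and $\alpha^4$ rigorous while only assuming $\alpha$ symmetric --- choosing the powers so that both the $\alpha^2$-separation of $A$ and $B$ and the final two-point argument close up simultaneously --- and checking that $A$, $B$, and $U$ are genuine nonempty open sets so that the weak-mixing hypothesis, which is phrased for nonempty open sets, truly applies. Since none of these steps invokes compactness of $X$ or any property of $T$, the conclusion holds verbatim on an arbitrary non-singleton Hausdorff uniform space, as claimed.
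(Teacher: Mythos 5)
Your proof is correct, and it takes a genuinely different route from the paper's. The paper argues by contradiction through equicontinuity: if $(T,X,\pi)$ were not sensitive, it would be equicontinuous at some point $x_0$; then the product system $(T,X\times X)$, being topologically transitive and equicontinuous at $(x_0,x_0)$, would have $(x_0,x_0)$ as a transitive point (this step is outsourced to a cited lemma of Dai--Xiao), which is absurd because the orbit of a diagonal point stays in $\varDelta_{X\times X}$, whose closure is proper when $X$ is Hausdorff and non-singleton. You instead give the direct, classical ``two separated targets'' argument: Hausdorffness yields a symmetric $\alpha$ with $(a,b)\notin\alpha^4$ for some $a\neq b$, the sets $A=\textit{Int}_X(\alpha[a])$ and $B=\textit{Int}_X(\alpha[b])$ are then $\alpha^2$-separated, and one application of product transitivity to $U\times U$ and $A\times B$ (exactly the form $N_T(U,A)\cap N_T(U,B)\neq\emptyset$ in which Definition~\ref{def3.1} is stated, so no passage from rectangles to general open sets is even needed) produces, for the same $t$, two points of $\delta[x]$ whose $t$-images cannot both be $\alpha$-close to $tx$. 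Your entourage bookkeeping ($\alpha^4$ for the separation, $\alpha^2$ for the final split) closes up correctly. What your route buys: it is self-contained (no appeal to the equicontinuity-point dichotomy or to the external lemma the paper cites), it is direct rather than by contradiction, and it exhibits an explicit sensitivity index attached to any pair of distinct points. What the paper's route buys: brevity given its machinery, and it makes explicit the theme the authors paraphrase as ``equicontinuity and weak mixing are incompatible,'' which connects this lemma to the dichotomy results of Section~2.
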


\begin{proof}
By contradiction, suppose that $(T,X,\pi)$ were not sensitive to initial conditions; then there exists a point $x_0\in X$ such that $(T,X,\pi)$ is equicontinuous at $x_0$. Thus the naturally induced product system $(T,X\times X)$ is also equicontinuous at the point $(x_0,x_0)\in X\times X$.

Since $(T,X,\pi)$ is topologically weakly-mixing, then by Def.~\ref{def3.1} it follows that $(T,X\times X)$ is topologically transitive and so $(x_0,x_0)\in\textit{Trans}(T,X\times X)$ by, e.g., \cite[Lemma~4.1]{DX}. In addition, we can take an index $\varepsilon\in\mathscr{U}_X$ such that $\overline{\varepsilon}\subsetneq X\times X$, for $X$ is not a singleton and $X\times X$ is Hausdorff. Therefore, there is some $t_0\in T$ with $t_0(x_0,x_0)\not\in\overline{\varepsilon}$. Since $\pi_{t_0}\colon X\rightarrow X$ is continuous, we can take some $\delta\in\mathscr{U}_X$ such that
$t_0(y,w)\not\in\overline{\varepsilon}$ for all $y,w\in\delta[x_0]$.
This contradicts that $(T,X,\pi)$ is equicontinuous at the point $x_0$.

The proof of Lemma~\ref{lem3.20} is thus completed.
\end{proof}

Finally, we can readily prove our final two main results on Li-Yorke chaos, which are comparable with Proposition~\ref{prop3.9}.

\begin{prop}\label{prop3.21}
Let $(T,X,\pi)$ be a topologically weakly-mixing flow on a non-singleton Polish space $X$ with $T$ a topological group. Then it is densely Li-Yorke chaotic.
\end{prop}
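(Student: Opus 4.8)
The plan is to fix an arbitrary reference sequence $\mathcal{F}=\{F_n\}_{n=1}^\infty$ of compact subsets of $T$ and to produce a \emph{dense} uncountable scrambled set $S_{\mathcal{F}}$, every pair of whose distinct points is simultaneously proximal and $\mathcal{F}$-separated; by Definition~\ref{def1.2} this is exactly what dense Li-Yorke chaos demands. First I would record the standing features of the setting. Since $(T,X,\pi)$ is weakly mixing on a non-singleton Polish space, Lemma~\ref{lem3.20} gives sensitivity, so by Lemma~\ref{lem3.15} there is a sensitivity index $\varepsilon\in\mathscr{U}_X$---say $\varepsilon=\{(a,b)\,|\,d(a,b)<c\}$ with $c>0$---for which $W_\varepsilon^s(x|\mathcal{F})$ is of the first category in $X$ for every $x\in X$. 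Sensitivity also forces $X$ to have no isolated points (the observation preceding Proposition~\ref{prop3.9}), so $X$ is a perfect Polish space and every residual subset of $X$ is dense and uncountable. Finally Lemma~\ref{lem3.7} shows $T$ is not compact, so $T\setminus F_n\neq\emptyset$ for all $n$ and $\mathcal{F}$-separation is meaningful.

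The crux---and the step I expect to be the main obstacle---is to show that the proximal cell $\mathcal{P}_x(T,X)$ is residual for a residual set of base points $x$. Here Proposition~\ref{prop3.14} is unavailable, since $X$ need not be locally compact and $(T,X,\pi)$ carries neither minimality nor an obvious almost periodic point. Instead I would argue through the product flow: weak mixing means $(T,X\times X)$ is topologically transitive, so by Basic Fact~\ref{fac1} the set $\textit{Trans}(T,X\times X)$ is a dense $G_\delta$ in the Polish space $X\times X$. Every product-transitive point is automatically proximal, for if $\overline{T(x,y)}=X\times X$ then the orbit meets every neighbourhood of $\varDelta_{X\times X}$, giving $t_n\in T$ with $t_n(x,y)\to(p,p)$ for some $p$, whence $(x,y)\in\mathcal{P}(T,X)$. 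Thus $\mathcal{P}(T,X)\supseteq\textit{Trans}(T,X\times X)$ is comeager with the Baire property, and the Kuratowski--Ulam theorem (valid since $X$ is second countable) yields a residual set $G_0\subseteq X$ such that for each $x\in G_0$ the section $\{y\,|\,(x,y)\in\textit{Trans}(T,X\times X)\}$, and hence the larger set $\mathcal{P}_x(T,X)$, is residual in $X$.

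With this in hand the endgame mirrors the proof of Proposition~\ref{prop3.18}. For $x\in G_0$ set $\mathrm{LY}_x=\mathcal{P}_x(T,X)\setminus W_\varepsilon^s(x|\mathcal{F})$; being a residual set minus a first-category set it is residual, and each $y\in\mathrm{LY}_x$ is proximal to $x$, while $y\notin W_\varepsilon^s(x|\mathcal{F})$ furnishes $s_n\in T\setminus F_n$ with $d(s_nx,s_ny)\ge c$, so that $(x,y)$ is $\mathcal{F}$-separated (both relations being symmetric in $x,y$). Using that $\mathrm{LY}_{x}$ is residual for every $x\in G_0$, a Baire-category induction over a countable base of $X$ lets me build a countable \emph{dense} set $L\subseteq G_0$ in which every pair of distinct points is Li-Yorke chaotic, by choosing at stage $m+1$ a point $x_{m+1}\in U_{m+1}\cap G_0\cap\bigcap_{i\le m}\mathrm{LY}_{x_i}$, this intersection being residual in the basic open set $U_{m+1}$ and hence nonempty.

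Finally a Zorn's-lemma argument extends $L$ to a scrambled set $S\subseteq G_0$ that is maximal among scrambled subsets of $G_0$; since $S\supseteq L$ it is dense. It remains to see that $S$ is uncountable: were $S$ countable, the set $G_0\cap\bigcap_{x\in S}\mathrm{LY}_x$ would be a countable intersection of residual sets, hence residual and therefore uncountable, so it would contain a point $z\notin S$ that is Li-Yorke paired with every element of $S$; then $S\cup\{z\}$ would be a strictly larger scrambled subset of $G_0$, contradicting maximality. Thus $S=S_{\mathcal{F}}$ is a dense uncountable Li-Yorke chaotic set relative to $\mathcal{F}$, and as $\mathcal{F}$ was arbitrary, $(T,X,\pi)$ is densely Li-Yorke chaotic.
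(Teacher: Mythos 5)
Your proof is correct, but it takes a genuinely different route from the paper's. The paper proves Proposition~\ref{prop3.21} by a three-way case analysis on the orbit structure of the product flow: if every point of $\textit{Trans}(T,X\times X)$ has an orbit with empty interior, then product-transitive pairs are recurrent in the sense of Definition~\ref{def2.11} and Kuratowski--Mycielski (Lemma~\ref{lem2.13}) yields a dense scrambled set; if some such orbit has nonempty interior, the group property forces all transitive orbits to coincide with a single open set $O$, and the paper treats separately $O\subsetneq X\times X\setminus\varDelta_{X\times X}$ (again via Lemma~\ref{lem2.13}) and $O=X\times X\setminus\varDelta_{X\times X}$, invoking sensitivity (Lemma~\ref{lem3.20}), the stable-set Lemma~\ref{lem3.15}, and the induction-plus-Zorn endgame of Proposition~\ref{prop3.18} only in this last sub-case. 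You collapse all of this into one uniform argument: the Kuratowski--Ulam theorem (used nowhere in the paper) applied to the comeager set $\textit{Trans}(T,X\times X)$ gives a residual set $G_0$ of base points whose proximal cells are residual, while Lemmas~\ref{lem3.20} and \ref{lem3.15} give $\mathcal{F}$-separation off a first-category set for \emph{every} base point, and the Proposition~\ref{prop3.18} endgame is then run throughout. Your individual steps check out: product-transitive pairs are indeed proximal; $y\notin W_\varepsilon^s(x|\mathcal{F})$ does produce $s_n\in T\setminus F_n$ with $d(s_nx,s_ny)\ge c$, matching the separation requirement of Definition~\ref{def1.2}; and the maximality argument for uncountability is sound because residual subsets of a perfect Polish space are uncountable. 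As for what each approach buys: the paper's Kuratowski--Mycielski cases produce Cantor scrambled sets and stay within the toolkit already developed in $\S\ref{sec2}$, whereas your argument is shorter, avoids the orbit-coincidence reasoning (which is precisely where the paper leans hardest on $T$ being a group), and uses the group hypothesis only through Lemma~\ref{lem3.7} to rule out compact $T$; consequently it would extend essentially verbatim to weakly-mixing semiflows on Polish spaces whenever non-compactness of $T$ is available (e.g., $T$ an abelian semigroup, again by Lemma~\ref{lem3.7}), which would strengthen Proposition~\ref{prop3.22} from compact metric to Polish phase spaces.
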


\begin{proof}
If $Int_{X\times X}T(x,y)=\emptyset$ for any $(x,y)\in\textit{Trans}(T,X\times X)$ with respect to $(T,X\times X)$, then each $(x,y)\in\textit{Trans}(T,X\times X)$ is recurrent in the sense of Definition~\ref{def2.11} for $(T,X\times X)$. We can then conclude easily the statement by Lemma~\ref{lem2.13}.

Let $Int_{X\times X}T(x,y)\not=\emptyset$ for some $(x,y)\in\textit{Trans}(T,X\times X)$; and then $T(x,y)$ is open for every $(x,y)\in\textit{Trans}(T,X\times X)$. Moreover, $T(x,y)=T(x^\prime,y^\prime)$ for any $(x,y), (x^\prime,y^\prime)\in\textit{Trans}(T,X\times X)$. Then there are two cases: (a) $T(x_0,y_0)\subsetneq X\times X\setminus\varDelta_{X\times X}$ for any $(x_0,y_0)\in\textit{Trans}(T,X\times X)$, and (b) $T(x_0,y_0)=X\times X\setminus\varDelta_{X\times X}$ for any $(x_0,y_0)\in\textit{Trans}(T,X\times X)$.

(a) Let $(x_0,y_0)\in\textit{Trans}(T,X\times X)$ be any given and let $\{F_n\}_{n=1}^\infty$ be a sequence of compact subsets of $T$.
Clearly there is a sequence $s_n\in T\setminus F_n, n=1,2,\dotsc$ such that
\begin{equation*}
\liminf_{n\to\infty}d(s_nx_0,s_ny_0)>0.
\end{equation*}
Thus $(x_0,y_0)$ is a Li-Yorke chaotic pair for $(T,X,\pi)$. Since $\textit{Trans}(T,X\times X)$ is a dense $G_\delta$-set in $X\times X$ and $X$ has no isolated points, hence by Lemma~\ref{lem2.13} we can conclude the statement in this case.

(b) Let $T(x,y)=X\times X\setminus\varDelta_{X\times X}$ for any $(x,y)\in\textit{Trans}(T,X\times X)$. Then $T(x,y)=X\times X\setminus\varDelta_{X\times X}$ for any $(x,y)\in X\times X\setminus\varDelta_{X\times X}$, for $T$ is a group. This implies that for any $x\in X$, the proximal cell $\mathcal{P}_x(T,X)=X$.

Let $\mathcal{F}=\{F_n\}_{n=1}^\infty$ be an increasing sequence of compact subsets of $T$. For any $x\in X$ and $\varepsilon>0$, define
\begin{gather*}
\textrm{LY}_x=\left\{y\in X\,|\,\exists\, t_n\in T,s_n\in T\setminus F_n\textit{ s.t. }\lim_{n\to\infty}d(t_nx,t_ny)=0,\ \lim_{n\to\infty}d(s_nx,s_ny)>\varepsilon\right\}
\end{gather*}
By Lemmas~\ref{lem3.20} and~\ref{lem3.15}, it follows that for some $\varepsilon>0$, $\textrm{LY}_x$ is residual in $X$ for any $x\in X$.
Finally, we can find a densely Li-Yorke chaotic set for $(T,X)$ similar to the proof of Proposition~\ref{prop3.18}.

This thus proves Proposition~\ref{prop3.21}.
\end{proof}

The special case of this proposition that $T$ is a countable discrete group is \cite[Theorem~1.2]{WZ} under the guise of ``weakly mixing subsets'' by different approaches.

If $T$ is not a group but only a semigroup, then we can obtain the dense Li-Yorke chaos under the abelian condition by using Furstenberg's intersection lemma.

\begin{prop}\label{prop3.22}
Let $(T,X,\pi)$ be a topologically weakly-mixing semiflow on a non-singleton compact metric space $X$ with $T$ an abelian topological semigroup. Then $(T,X,\pi)$ is densely Li-Yorke chaotic.
\end{prop}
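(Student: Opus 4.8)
The plan is to produce, for an arbitrarily fixed reference sequence $\mathcal{F}=\{F_n\}_{n=1}^\infty$ of compact subsets of $T$, a single subset of the product $X\times X$ — namely the set of $2$D Li-Yorke pairs relative to $\mathcal{F}$ — that contains a dense $G_\delta$-set, and then to feed this into the Kuratowski--Mycielski theorem (Lemma~\ref{lem2.13}) exactly as in the proofs of Propositions~\ref{prop3.9} and~\ref{prop3.10} to extract a dense uncountable scrambled set. First I would record the standing facts: by Lemma~\ref{lem3.20} the semiflow is sensitive, so I fix a sensitivity constant $\varepsilon>0$ (shrinking it if necessary so that $X$ contains two open balls $W_1,W_2$ with $d(W_1,W_2)>\varepsilon$); by the observation preceding Proposition~\ref{prop3.9} the space $X$ has no isolated points; and by Lemma~\ref{lem3.7} the semigroup $T$ is not compact, so $T\setminus F_n\neq\emptyset$ for every $n$. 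Since $X$ is compact metric, $X\times X$ is a Baire space, so it suffices to show that the proximal relation and the $(\varepsilon,\mathcal{F})$-separated relation each contain a dense $G_\delta$-subset of $X\times X$; their (symmetric) intersection is then a dense $G_\delta$, every off-diagonal point of which is a Li-Yorke pair.

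For the proximal relation $P=\{(x,y):\inf_{t\in T}d(tx,ty)=0\}$ I would write $P=\bigcap_{n}\bigcup_{t\in T}\{(x,y):d(tx,ty)<1/n\}$, a $G_\delta$-set, and check that each open set $\bigcup_{t}\{(x,y):d(tx,ty)<1/n\}$ is dense. Given a basic open box $U\times V$, choose an open ball $W$ of diameter $<1/n$; topological weak mixing (Definition~\ref{def3.1}) gives $N_T(U,W)\cap N_T(V,W)\neq\emptyset$, so some $t$ sends a point of $U$ and a point of $V$ into $W$, whence there is $(x,y)\in U\times V$ with $d(tx,ty)<1/n$. Thus $P$ is a dense $G_\delta$ in $X\times X$.

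The separated relation is the main step. For each $n$ set $B_n=\{(x,y):\exists\, s\in T\setminus F_n,\ d(sx,sy)>\varepsilon\}$, which is open, and put $\mathrm{Sep}_\varepsilon=\bigcap_n B_n$. If $(x,y)\in\mathrm{Sep}_\varepsilon$, then choosing $s_n\in T\setminus F_n$ with $d(s_nx,s_ny)>\varepsilon$ gives $\liminf_n d(s_nx,s_ny)\ge\varepsilon>0$ with $s_n\notin F_n$, i.e. $(x,y)$ is separated relative to $\mathcal{F}$. To see each $B_n$ is dense I would examine its closed complement $C_n=(X\times X)\setminus B_n=\{(x,y):d(sx,sy)\le\varepsilon\ \forall s\in T\setminus F_n\}$; its vertical slice over a fixed $x$ is a stable set of the type $W_\varepsilon^s(x|F_n)$, which by the argument in the proof of Lemma~\ref{lem3.15} (using sensitivity and compactness of $F_n$) has empty interior. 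Now a closed subset of $X\times X$ all of whose vertical slices have empty interior must itself have empty interior: if its interior were nonempty it would contain a box $U\times V$ with $U,V$ nonempty open, and then for each $x\in U$ the slice would contain the open set $V$, a contradiction. Hence $C_n$ is nowhere dense, $B_n$ is dense open, and $\mathrm{Sep}_\varepsilon$ is a dense $G_\delta$. This elementary slice argument is exactly where the proof is delicate: unlike the minimal case of Proposition~\ref{prop3.18}, I cannot appeal to almost periodicity of every point to fill the proximal cells, and I cannot expect the return-time sets $N_T(U,V)$ to be thick without surjectivity of the transition maps; routing the separation through Lemma~\ref{lem3.15} sidesteps both difficulties.

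Finally, the intersection $\mathcal{G}=P\cap\mathrm{Sep}_\varepsilon$ is a symmetric dense $G_\delta$ in $X\times X$, so $R:=(X\times X)\setminus\mathcal{G}$ is a symmetric $F_\sigma$-set of the first category. Applying Lemma~\ref{lem2.13} with $W=X$ (which has no isolated points) to $R$ yields, inside any prescribed nonempty open region, Cantor sets independent in $R$; as in the conclusions of Propositions~\ref{prop3.9}--\ref{prop3.10} one assembles from these a dense uncountable set $S$ every pair of whose distinct points lies in $\mathcal{G}$, hence is a Li-Yorke pair relative to $\mathcal{F}$. Since $\mathcal{F}$ was arbitrary, $(T,X,\pi)$ is densely Li-Yorke chaotic. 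The abelian hypothesis enters only through Lemma~\ref{lem3.7} (non-compactness of $T$); the necessity half of Furstenberg's Lemma~\ref{lem3.2} guarantees, if one prefers, that all finite products are weakly mixing, but the argument above needs only $2$-fold weak mixing.
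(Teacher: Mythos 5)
Your proof is correct, but it takes a genuinely different route from the paper's. The paper argues fiber-wise: it uses Furstenberg's intersection lemma (Lemma~\ref{lem3.2}, which is where the abelian hypothesis enters) together with the finite intersection property of the compact space $X$ to produce, for every $x\in X$, a point $z\in\bigcap_{F\in\mathcal{F}(T,X,\pi)}\overline{Fx}$, and from this deduces that every proximal cell $\mathcal{P}_x(T,X)$ is a dense $G_\delta$ in $X$; combining this with Lemmas~\ref{lem3.20} and~\ref{lem3.15} it gets that the Li-Yorke cell $\mathrm{LY}_x(T,X)$ is residual for \emph{every} $x$, and then builds the dense scrambled set by induction plus Zorn's lemma, as in Proposition~\ref{prop3.18}. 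You instead work in the product $X\times X$: proximality of a generic pair follows directly from the definition of $2$-fold weak mixing (Definition~\ref{def3.1}), with no filter machinery and no commutativity; separation follows from your slice argument, which reduces to the empty-interior property of stable sets from the proof of Lemma~\ref{lem3.15}; and the scrambled set comes from Kuratowski--Mycielski (Lemma~\ref{lem2.13}), as in Propositions~\ref{prop3.9} and~\ref{prop3.10}. The trade-offs are real: your route uses the abelian hypothesis only through Lemma~\ref{lem3.7} (non-compactness of $T$, hence $T\setminus F_n\neq\emptyset$) and uses compactness of $X$ only to ensure $X\times X$ is Baire, so your argument in fact proves the statement for a non-singleton Polish phase space; on the other hand, the paper's fiber-wise conclusion---residuality of $\mathcal{P}_x$ and $\mathrm{LY}_x$ at every single $x$---is strictly stronger than genericity of pairs in $X\times X$ and is exactly what the paper recycles immediately afterwards to obtain Li-Yorke sensitivity (the proposition generalizing \cite[Corollary~3.9]{AK}); your generic-pair argument does not yield that corollary. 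Finally, your last assembly step (extracting a dense uncountable independent set from countably many Cantor sets) is stated at the same level of detail as the paper's own invocations of Lemma~\ref{lem2.13}, so there is no gap there beyond what the paper itself leaves implicit.
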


\begin{proof}
By Furstenberg's intersection lemma (Lemma~\ref{lem3.2}) and the finite intersection property of compact space, it follows that
\begin{gather*}
{\bigcap}_{F\in\mathcal{F}(T,X,\pi)}\overline{Fx}\not=\emptyset\quad \forall x\in X.
\end{gather*}
This implies that for any $x\in X, z\in\bigcap_{F\in\mathcal{F}(T,X,\pi)}\overline{Fx}$ and each nonempty open subset $U$ of $X$, by induction one can find some point $y\in U$ and some sequence $t_n\in T$ such that $t_nx\to z$ and $t_ny\to z$. Thus $\mathcal{P}_x(T,X)$ is a dense $G_\delta$-set in $X$, for every $x\in X$; cf.~\cite[Theorem~3.8]{AK} for cascades.

On the other hand, any topologically weakly-mixing semiflow is clearly sensitive to initial conditions by Lemma~\ref{lem3.20}. Moreover, $T$ is not compact by Lemma~\ref{lem3.7}. Whence $\textrm{LY}_x(T,X)$ relative to any given increasing sequence $\mathcal{F}$ of compact subsets of $T$, which is defined as in the proof of Proposition~\ref{prop3.18}, is a dense $G_\delta$-set in $X$ by Lemma~\ref{lem3.15}. Moreover, $X$ contains no isolated points. Further, by induction and Zorn's lemma, we can easily choose a dense Li-Yorke chaotic set for $(T,X,\pi)$.

This thus concludes the proof of Proposition~\ref{prop3.22}.
\end{proof}

It is easy to see that Propositions~\ref{prop3.19} and \ref{prop3.18} are simple consequences of Propositions~\ref{prop3.21} and \ref{prop3.22} by different approaches, respectively.
\subsection{Li-Yorke sensitivity}
In the following definition that is stronger than sensitivity to initial conditions, let $(T,X,\pi)$ be a topological semiflow on a uniform space $(X,\mathscr{U}_X)$ with phase semigroup $T$.

\begin{defn}
$(T,X,\pi)$ is called \textbf{\textit{Li-Yorke sensitive}} if there exists an index $\varepsilon\in\mathscr{U}_X$ such that every $x\in X$ is a limit of points in $\mathcal{P}_x(T,X)\setminus W_\varepsilon^s(X,T;x)$; that is, $x\in\overline{\mathcal{P}_x(T,X)\setminus W_\varepsilon^s(T,X;x)}$. See \cite[Def.~3.5]{AK} for cascades on compact metric spaces.
\end{defn}

Then from the proof of Proposition~\ref{prop3.22} we can easily obtain the following, which generalizes \cite[Corollary~3.9]{AK}.

\begin{prop}
Let $(T,X,\pi)$ be a topologically weakly-mixing semiflow on a non-singleton compact metric space $X$ with $T$ an abelian topological semigroup. Then $(T,X,\pi)$ is Li-Yorke sensitive.
\end{prop}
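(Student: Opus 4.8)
The plan is to extract the two ingredients already isolated in the proof of Proposition~\ref{prop3.22}---a dense $G_\delta$ proximal cell and a first-category stable set---now measured against the \emph{intrinsic} sensitivity index, and then to read Li-Yorke sensitivity off the definition. First I would fix the index $\varepsilon$. By Lemma~\ref{lem3.20} the weakly-mixing semiflow $(T,X,\pi)$ is sensitive to initial conditions, and by Lemma~\ref{lem3.7} together with Corollary~\ref{cor3.3}-(3) the abelian phase semigroup $T$ is not compact. Feeding this sensitivity into Lemma~\ref{lem3.15} yields an index $\varepsilon\in\mathscr{U}_X$, which I keep fixed throughout and which will serve as the $\varepsilon$ demanded by the definition. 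Next I would recall from the proof of Proposition~\ref{prop3.22} that abelian weak-mixing forces, via Furstenberg's intersection lemma (Lemma~\ref{lem3.2}) and the finite intersection property of the compact space $X$, the nonemptiness of $\bigcap_{F\in\mathcal{F}(T,X,\pi)}\overline{Fx}$ for every $x$, whence $\mathcal{P}_x(T,X)$ is a dense $G_\delta$-subset of $X$ for each $x\in X$. This settles the proximality half with no further work.

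The crux is to show that the intrinsic stable set $W_\varepsilon^s(T,X;x)$ is of the first category for every $x$. Here I would pass to an increasing reference sequence $\mathcal{F}=\{F_n\}_{n=1}^\infty$ of compact subsets of $T$ chosen so that every compact subset of $T$ lies in some $F_n$. Along such an exhausting sequence the ``$\exists$ compact $K$'' quantifier in the definition of $W_\varepsilon^s(T,X;x)$ collapses: if $y\in W_\varepsilon^s(T,X;x)$ is witnessed by a compact $K$, then $K\subseteq F_n$ for some $n$, so $T\setminus F_n\subseteq T\setminus K$ and therefore $(tx,ty)\in\varepsilon\subseteq\overline{\varepsilon}$ for all $t\notin F_n$, i.e. $y\in W_\varepsilon^s(x|F_n)\subseteq W_\varepsilon^s(x|\mathcal{F})$. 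Thus $W_\varepsilon^s(T,X;x)\subseteq W_\varepsilon^s(x|\mathcal{F})$, and Lemma~\ref{lem3.15} says the right-hand set is of the first category, so $W_\varepsilon^s(T,X;x)$ is too.

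Combining the two facts finishes the argument: since $\mathcal{P}_x(T,X)$ is a dense $G_\delta$ and $W_\varepsilon^s(T,X;x)$ is of first category, the difference $\mathcal{P}_x(T,X)\setminus W_\varepsilon^s(T,X;x)$ contains the residual set $\mathcal{P}_x(T,X)\setminus W_\varepsilon^s(x|\mathcal{F})$ and is in particular dense in $X$. Hence $x\in\overline{\mathcal{P}_x(T,X)\setminus W_\varepsilon^s(T,X;x)}$ for every $x\in X$, which is exactly Li-Yorke sensitivity for the single index $\varepsilon$. Note that the final conclusion does not depend on the auxiliary sequence $\mathcal{F}$, since the set $\mathcal{P}_x(T,X)\setminus W_\varepsilon^s(T,X;x)$ is defined intrinsically; $\mathcal{F}$ enters only as a device for invoking Lemma~\ref{lem3.15}.

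I expect the main obstacle to be precisely the reconciliation carried out in the second paragraph. The definition of $W_\varepsilon^s(T,X;x)$ uses the intrinsic ``outside some compact set'' quantifier, whereas Lemma~\ref{lem3.15} controls only the reference-sequence stable sets $W_\varepsilon^s(x|\mathcal{F})$; bridging the two requires an increasing sequence of compact sets that is cofinal among all compact subsets of $T$ (an exhaustion of $T$ by compacta). Verifying that such a sequence is available---so that the countable union $\bigcup_n W_\varepsilon^s(x|F_n)$ really dominates the a priori uncountable union over all compact witnesses $K$---is the delicate point, and is what keeps the statement from being a purely formal corollary of Proposition~\ref{prop3.22}.
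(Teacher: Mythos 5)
Your overall skeleton is exactly the paper's: the paper proves this proposition by a one-sentence appeal to the proof of Proposition~\ref{prop3.22}, i.e., precisely the two ingredients you assemble---the dense $G_\delta$ proximal cells $\mathcal{P}_x(T,X)$ coming from Lemma~\ref{lem3.2} and compactness of $X$, plus sensitivity (Lemma~\ref{lem3.20}), non-compactness of $T$ (Lemma~\ref{lem3.7}), and the first-category stable sets of Lemma~\ref{lem3.15}---followed by the Baire-category conclusion. Your first and third paragraphs are correct as written (the appeal to Corollary~\ref{cor3.3}-(3) is redundant, since weak mixing is already hypothesized and Lemma~\ref{lem3.7} applies directly).

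The gap is in your second paragraph, and it is genuine rather than merely delicate: you ``choose'' an increasing sequence $\mathcal{F}=\{F_n\}$ of compact sets such that every compact subset of $T$ lies in some $F_n$, i.e., you assume $T$ is hemicompact, which is not among the hypotheses and can fail for legitimate instances of the proposition. For example, restrict a weakly-mixing $C^0$-flow $\mathbb{R}\curvearrowright X$ on a non-singleton compact metric space to the dense subgroup $T=(\mathbb{Q},+)$ with its usual (non-discrete) topology: each $N_{\mathbb{R}}(U,V)\cap N_{\mathbb{R}}(U',V')$ is open and nonempty, hence meets $\mathbb{Q}$, so the restricted action is still a topologically weakly-mixing semiflow with abelian $T$; but $\mathbb{Q}$ admits no countable cofinal family of compacta (given compacta $K_n\subseteq\mathbb{Q}$, each is closed and nowhere dense in $\mathbb{R}$, so one can pick rationals $q_n\notin K_n$ with $q_n\to 0$, and then $\{0\}\cup\{q_n\,|\,n\ge1\}$ is a compact subset of $\mathbb{Q}$ contained in no $K_n$). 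An uncountable discrete abelian phase semigroup is another such instance, since there compact means finite. For such $T$ the inclusion $W_\varepsilon^s(T,X;x)\subseteq W_\varepsilon^s(x|\mathcal{F})$ on which your argument rests is simply unavailable, so Lemma~\ref{lem3.15} gives no control of the intrinsic stable set $W_\varepsilon^s(T,X;x)$; this is consistent with the fact that the paper's own corollary following Lemma~\ref{lem3.15} asserts first-categoricity of $W_\varepsilon^s(T,X;x)$ only for countable $T$. As written, then, your proof establishes the proposition only for hemicompact $T$ (e.g., discrete countable $T$, or locally compact $\sigma$-compact $T$ such as $\mathbb{Z}_+$ and $\mathbb{R}_+$); in the stated generality the ``delicate point'' you flag in your last paragraph is exactly where the argument breaks, and closing it requires an idea not present in the proposal (nor, it must be said, made explicit in the paper's own terse proof): one must bound the union of the closed nowhere dense sets $S_K=\bigcap_{t\in T\setminus K}\{y\,|\,(tx,ty)\in\varepsilon\}$ over \emph{all} compact $K$ by something countable, and cofinality in $T$ is not the available mechanism.
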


\subsection{Li-Yorke chaos of strongly-mixing flows}\label{sec3.5}
Let $T$ be a topological group. A flow $T\curvearrowright_\pi X$ on a topological space $X$ with the property that for any nonempty open subsets $U,V$ of $X$ $N_T(U,V)$ is nonempty co-compact in $T$ is known as \textbf{\textit{topologically strongly-mixing}}.

\begin{cor}\label{cor3.25}
Let $(T,X,\pi)$ be a topologically strongly-mixing flow on a non-singleton Polish space $X$ with $T$ a topological group. Then $(T,X,\pi)$ is densely Li-Yorke chaotic.
\end{cor}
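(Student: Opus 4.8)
The plan is to derive Corollary~\ref{cor3.25} directly from Proposition~\ref{prop3.21}. Since the hypotheses of that proposition---a topological group $T$ acting on a non-singleton Polish space $X$---are precisely those assumed here, it suffices to verify the single implication that a topologically strongly-mixing flow is topologically weakly-mixing; the dense Li-Yorke chaos then follows at once. Thus the whole argument collapses to proving ``strongly-mixing $\Rightarrow$ weakly-mixing'' for group actions, after which I would simply invoke Proposition~\ref{prop3.21}.

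To establish this implication I would fix four arbitrary nonempty open sets $U,U',V,V'$ in $X$ and study the set $N_T(U,V)\cap N_T(U',V')$, whose nonemptiness is exactly what topological weak mixing requires by Definition~\ref{def3.1}. By the definition of strong mixing both $T\setminus N_T(U,V)$ and $T\setminus N_T(U',V')$ are compact; hence their union $\bigl(T\setminus N_T(U,V)\bigr)\cup\bigl(T\setminus N_T(U',V')\bigr)$ is again compact, being a finite union of compact sets. Since this union is the complement of $N_T(U,V)\cap N_T(U',V')$, the intersection is itself co-compact in $T$. Once it is known to be nonempty, $(T,X\times X)$ is topologically transitive, i.e. $(T,X,\pi)$ is topologically weakly-mixing, and the corollary is finished by Proposition~\ref{prop3.21}.

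The main obstacle is precisely the nonemptiness of this co-compact intersection, and here the non-compactness of the phase group $T$ is indispensable: a co-compact subset of $T$ is forced to meet every other co-compact subset only when $T$ is not compact, since on a compact group two disjoint nonempty open co-compact sets may coexist and the co-compactness hypothesis then carries no mixing content. I would therefore first record that $T$ is non-compact, which is the substantive point built into the strong-mixing assumption on a non-singleton space; note also that Lemma~\ref{lem3.7} re-confirms, once weak mixing is in hand, that $T$ cannot be compact. With $T$ non-compact the finite union above cannot exhaust $T$, so $N_T(U,V)\cap N_T(U',V')\neq\emptyset$, weak mixing holds, and the dense Li-Yorke chaos follows, completing the proof.
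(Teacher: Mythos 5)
Your overall route is exactly the paper's: the paper's entire proof of Corollary~\ref{cor3.25} is the single sentence that it follows from Proposition~\ref{prop3.21} because topologically strongly-mixing implies topologically weakly-mixing. Your elaboration of that implication is also the natural one, and its first half is correct: if $T\setminus N_T(U,V)$ and $T\setminus N_T(U',V')$ are (relatively) compact, so is their union, hence $N_T(U,V)\cap N_T(U',V')$ is co-compact; and a co-compact subset of a \emph{non-compact} group is nonempty. So, granting that $T$ is not compact, weak mixing follows from Definition~\ref{def3.1} and Proposition~\ref{prop3.21} finishes the proof.

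The gap is in how you discharge the non-compactness of $T$, which you yourself identify as the main obstacle. Neither of your two justifications works. The appeal to Lemma~\ref{lem3.7} is circular: that lemma \emph{assumes} weak mixing, which is precisely what you are in the middle of proving; you half-acknowledge this (``once weak mixing is in hand''), but nothing else in your argument establishes non-compactness. And the claim that non-compactness is ``built into the strong-mixing assumption on a non-singleton space'' is not only unproven but false under the paper's literal definitions: if $T$ is compact, then every $N_T(U,V)$ (an open set) has compact complement, so co-compactness is vacuous, and for instance the circle group $T=\mathbb{R}/\mathbb{Z}$ acting on $X=\mathbb{R}/\mathbb{Z}$ by rotations is ``topologically strongly-mixing'' in the stated sense --- yet this flow is isometric, has no proximal pairs, and is neither weakly mixing nor Li-Yorke chaotic. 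So non-compactness of $T$ genuinely cannot be derived from the hypotheses as written; it must be read into the definition of strong mixing (as is standard, since mixing concerns behavior ``at infinity'' in $T$) or added as a hypothesis. To be fair, the paper's one-line proof silently makes the same assumption; but your write-up, having explicitly isolated non-compactness as the crux, then supports it only by an assertion and a circular citation, so as it stands the argument is incomplete at exactly the point you flagged.
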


\begin{proof}
This follows from Proposition~\ref{prop3.21}, for topologically strongly-mixing implies topologically weakly-mixing.
\end{proof}

This corollary generalizes \cite{XY} which is for $\mathbb{Z}$-actions on compact metric spaces with no isolated points.
\section*{\textbf{Acknowledgments}}%
The authors thank Prof.~Ethan~Akin, Prof.~Xiangdong Ye and Prof.~Guohua Zhang for sending us their articles \cite{AAB,AK}, \cite{AGHSY} and \cite{WZ}, respectively. Thanks are also due to Prof. Michael~Megrelishvili and Dr. Zhijing Chen for their many comments on the proof details.

This work was partly supported by National Natural Science Foundation of China (Grant Nos. 11431012, 11271183) and PAPD of Jiangsu Higher Education Institutions.


\begin{thebibliography}{10}
\bibitem{AAB}
   \newblock {E.~Akin, J.~Auslander and K.~Berg}:
   \newblock {When is a transitive map chaotic?}.
   \newblock {in: Convergence in Ergodic Theory and Probability (Columbus, OH, 1993), Ohio State Univ. Math. Res. Inst. Publ., Vol. 5, de Gruyter, Berlin, 1996, pp.~25--40}.

\bibitem{AGHSY}
   \newblock {E.~Akin, E.~Glasner, W.~Huang, S.~Shao and X.~Ye}:
   \newblock {Sufficient conditions under which a transitive system is chaotic}.
   \newblock {Ergod. Th. $\&$ Dynam. Sys. 30 (2010), 1277--1310}.

\bibitem{AK}
   \newblock {E.~Akin and S.~Kolyada}:
   \newblock {Li-Yorke sensitivity}.
   \newblock {Nonlinearity 16 (2003), 1421--1433}.

\bibitem{Ar}
   \newblock {T.~Arai}:
   \newblock {Devaney's and Li-Yorke's chaos in uniform spaces}.
   \newblock {J. Dyn. Control Syst. xx (2017), xx+8}.

\bibitem{Aus}
   \newblock {J.~Auslander}:
   \newblock {Minimal Flows and their Extensions}.
   \newblock {North-Holland Math. Studies, vol. 153. North-Holland, Amsterdam, 1988}.
\bibitem{AF}
   \newblock {J.~Auslander and H.~Furstenberg}:
   \newblock {Product recurrence and distal points}.
   \newblock {Trans. Amer. Math. Soc. {343} (1994), 221--232}.
\bibitem{AY}
   \newblock {J.~Auslander and J.~Yorke}:
   \newblock {Interval maps, factors of maps and chaos}.
   \newblock {T\^{o}hoku Math. J. 32 (1980), 177--188}.

\bibitem{BB}
   \newblock {J.~Banks, J.~Brooks, G.~Cairns, G.~Davis and P.~Stacey}:
   \newblock {On Devaney's definition of chaos}.
   \newblock {Amer. Math. Monthly 99 (1992), 332--334}.

\bibitem{CC}
   \newblock {T.~Ceccherini-Silberstein and M.~Coornaert}:
   \newblock {Sensitivity and Devaney's chaos in uniform spaces}.
   \newblock {J. Dyn. Control Syst. 19 (2013), 349--357}.

\bibitem{CD}
   \newblock {B.~Chen and X.~Dai}:
   \newblock {On uniformly recurrent motions of topological semigroup actions}.
   \newblock {Discret. Contin. Dyn. Syst. 36 (2016), 2931--2944}.

\bibitem{D1}
   \newblock {X.~Dai}:
   \newblock {Chaotic dynamics of continuous-time topological semi-flows on Polish spaces}.
   \newblock {J. Differential Equations 258 (2015), 2794--2805}.
\bibitem{D2}
   \newblock {X.~Dai}:
   \newblock {On chaotic minimal center of attraction of a Lagrange stable motion for topological semiflows}.
   \newblock {J. Differential Equations 260 (2016), 4393--4409}.

\bibitem{Dai}
   \newblock {X.~Dai}:
   \newblock {Ellis' semigroup and proximality of abstract dynamics and central sets of semigroups}.
   \newblock {Submitted to DCDS-A, 2017}.
\bibitem{DL}
   \newblock {X.~Dai and H.~Liang}:
   \newblock {On Galvin's theorem for compact Hausdorff right-topological semigroups with dense topolgical centers}.
   \newblock {Submitted to Sci. China Math. 2016}.
\bibitem{DX}
   \newblock {X.~Dai and Z.~Xiao}:
   \newblock {Equicontinuity, uniform almost periodicity, and regionally proximal relation for topological semiflows}.
   \newblock {Submitted to Topology $\&$ Appl. 2017}.

\bibitem{Dev}
   \newblock {R.\,L.~Devaney}:
   \newblock {An Introduction to Chaotic Dynamical Systems}.
   \newblock {Addison-Wesley, 1989}.

\bibitem{Ell}
   \newblock {R.~Ellis}:
   \newblock {Lectures on topological dynamics}.
   \newblock {W.A. Benjamin, New York, 1969}.

\bibitem{EEN}
   \newblock {D.~Ellis, R.~Ellis and M.~Nerurkar}:
   \newblock {The topological dynamics of semigroup actions}.
   \newblock {Trans. Amer. Math. Soc. 353 (2001), 1279--1320}.


\bibitem{Fur67}
   \newblock {H.~Furstenberg}:
   \newblock {Disjointness in ergodic theory, minimal sets, and a problem in Diophantine approximation}.
   \newblock {Math. Systems Theory 1 (1967), 1--49}.

\bibitem{Fur}
   \newblock {H.~Furstenberg}:
   \newblock {Recurrence in Ergodic Theory and Combinatorial Number Theory}.
   \newblock {Princeton University Press, Princeton, New Jersey, 1981}.

\bibitem{G76}
   \newblock {E.~Glasner}:
   \newblock {Proximal Flows}.
   \newblock {Lecture Notes in Math. 517, Springer-Verlag, 1976}.

\bibitem{G03}
   \newblock {E.~Glasner}:
   \newblock {Ergodic Theory via Joinings}.
   \newblock {Mathematical Surveys and Monographs 101, AMS Providence, RI, 2003}.

\bibitem{GM}
   \newblock {E.~Glasner and M.~Megrelishvili}:
   \newblock {Hereditarily non-sensitive dynamical systems and linear representations}.
   \newblock {Colloq. Math. 104 (2006), 223--283}.

\bibitem{GW}
   \newblock {E.~Glasner and B.~Weiss}:
   \newblock {Sensitive dependence on initial conditions}.
   \newblock {Nonlinearity 6 (1993), 1067--1075}.

\bibitem{GH}
   \newblock {W.\,H.~Gottschalk and G.\,A.~Hedlund}:
   \newblock {Topological Dynamics}.
   \newblock {Amer. Math. Soc. Coll. Publ., Vol. 36, Amer. Math. Soc., Providence, R.I., 1955}.

\bibitem{HLY}
   \newblock {W.~Huang, P.~Lu and X.~Ye}:
   \newblock {Measure-theoretical sensitivity and equicontinuity}.
   \newblock {Israel J. Math. 148 (2005), 277--304}.

\bibitem{HY01}
   \newblock {W.~Huang and X.~Ye}:
   \newblock {Homeomorphisms with the whole compacta being scrambled sets}.
   \newblock {Ergod. Th. $\&$ Dynam. Sys. 21 (2001), 77--91}.

\bibitem{HY}
   \newblock {W.~Huang and X.~Ye}:
   \newblock {Devaney's chaos or $2$-scattering implies Li-Yorke's chaos}.
   \newblock {Topology $\&$ Appl. 117 (2002), 259--272}.

\bibitem{KL}
   \newblock {D.~Kerr and H.~Li}:
   \newblock {Combinatorial independence and sofic entropy}.
   \newblock {Commun. Math. Stat. 1 (2013), 213--257}.

\bibitem{KM}
   \newblock {E.~Kontorovich and M.~Megrelishvili}:
   \newblock {A note on sensitivity of semigroup actions}.
   \newblock {Semigroup Forum 76 (2008), 133--141}.

\bibitem{Ku}
   \newblock {K.~Kuratowski}:
   \newblock {Applications of the Baire-category method to the problem of independent sets}.
   \newblock {Fund. Math. 81 (1973), 65--72}.

\bibitem{LiY}
   \newblock {J.~Li and X.~Ye}:
   \newblock {Recent development of chaos theory in topological dynamics}.
   \newblock {Acta Math. Sin. (Engl. Ser.) 32 (2016), 83--114}.

\bibitem{LY}
   \newblock {T.~Li and J.~Yorke}:
   \newblock {Period $3$ implies chaos}.
   \newblock {Amer. Math. Monthly 82 (1975), 985--992}.

\bibitem{Mai}
   \newblock {J.\,H.~Mai}:
   \newblock {Scrambled sets of continuous maps of $1$-dimensional polyhedra}.
   \newblock {Trans. Amer. Math. Soc. 351 (1999), 353--362}.

\bibitem{Mai04}
   \newblock {J.\,H.~Mai}:
   \newblock {Devaney's chaos implies existence of s-scrambled sets}.
   \newblock {Proc. Amer. Math. Soc. 132 (2004), 2761--2767}.

\bibitem{Mis}
   \newblock {M.~Misiurewicz}:
   \newblock {Chaos almost everywhere}.
   \newblock {Iteration theory and its functional equations (Lochau, 1984), 125--130. Lecture Notes in Math. 1163, Springer, Berlin, 1985}.

\bibitem{Pol}
   \newblock {F.~Polo}:
   \newblock {Sensitive dependence on initial conditions and chaotic group actions}.
   \newblock {Proc. Amer. Math. Soc. 138 (2010), 2815--2826}.

\bibitem{Ryb}
   \newblock {O.\,V.~Rybak}:
   \newblock {Li-Yorke sensitivity for semigroup actions}.
   \newblock {Ukrainian. Math. J. 65 (2013), 752--759}.

\bibitem{SKBS}
   \newblock {F.\,M.~Schneider, S.~Kerkhoff, M.~Behrisch and S.~Siegmund}:
   \newblock {Chaotic actions of topological semigroups}.
   \newblock {Semigroup Forum 87 (2013), 590--598}.

\bibitem{V}
   \newblock {W.\,A.~Veech}:
   \newblock {Point-distal systems}.
   \newblock {Amer. Math. J. 92 (1970), 205--242}.

\bibitem{WCF15}
   \newblock {H.~Wang, Z.~Chen and H.~Fu}:
   \newblock {M-systems and scattering systems of semigroup actions}.
   \newblock {Semigroup Forum  91 (2015), 699--717}.

\bibitem{WLF}
   \newblock {H.~Wang, X.~Long and H.~Fu}:
   \newblock {Sensitivity and chaos of semigroup actions}.
   \newblock {Semigroup Forum 84 (2012), 81--90}.

\bibitem{WH}
   \newblock {X.~Wang and Y.~Huang}:
   \newblock {Devaney chaos revisited}.
   \newblock {Topology Appl. 160 (2013), 455--460}.

\bibitem{WZ}
   \newblock {Z.~Wang and G.~Zhang}:
   \newblock {Chaotic behavior of group actions}.
   \newblock {Dynamics and numbers, 299--315, Contemp. Math., 669, Amer. Math. Soc., Providence, RI, 2016}.

\bibitem{W00}
   \newblock {B.~Weiss}:
   \newblock {A survey of generic dynamics}.
   \newblock {Descriptive Set Theory and Dynamical Systems (Marseille-Luming, 1996), pages 273--291, 2000}.

\bibitem{XY}
   \newblock {J.~Xiong and Z.~Yang}:
   \newblock {Chaos caused by a topologically mixing map}.
   \newblock {in: World Sci. Adv. Ser. Dyn. Syst. 9 (1990), 550--572}.

\bibitem{YZ}
   \newblock {X.~Ye and R.-F.~Zhang}:
   \newblock {On sensitive sets in topological dynamics}.
   \newblock {Nonlinearity 21 (2008), 1601--1620}.
\end{thebibliography}
\end{document}